\newcommand{\F}{\mathbb{F}}
\newcommand{\Q}{\mathbb{Q}}
\newcommand{\Qbar}{\overline{\Q}}
\newcommand{\Aut}{\operatorname{Aut}}
\newcommand{\Gal}{\operatorname{Gal}}
\newcommand{\SL}{\operatorname{SL}}
\newcommand{\GL}{\operatorname{GL}}
\newcommand{\PGL}{\operatorname{PGL}}
\newcommand{\Z}{\mathbb{Z}}
\newcommand{\Kbar}{\overline K}
\newcommand{\tor}{\mathrm{tors}}
\newcommand{\I}{\mathcal{I}}
\newcommand{\angles}[1]{\langle #1\rangle}
\newcommand{\tLegendre}[2]{\big(\tfrac{#1}{#2}\big)}
\newcommand{\odd}{\mathrm{odd}}
\newcommand{\ndiv}{\centernot|}
\newcommand{\ord}{\operatorname{ord}}
\newcommand{\divmin}{\operatorname{div-min}}
\newcommand{\smallmat}[4]{\begin{psmallmatrix}#1&#2\\#3&#4\end{psmallmatrix}}
\newcommand{\smallvect}[2]{\begin{psmallmatrix}#1\\#2\end{psmallmatrix}}
\m@th\displaystyle{##}$\hfil}
\m@th\displaystyle{##}$\hfil}
\newtheorem{theorem}{Theorem}[section]
\newtheorem{lemma}[theorem]{Lemma}
\newtheorem{proposition}[theorem]{Proposition}
\newtheorem{conjecture}[theorem]{Conjecture}
\definecolor{lightpink}{rgb}{1.0, 0.71, 0.76}
\definecolor{zaffre}{rgb}{0.0, 0.08, 0.66}
\numberwithin{equation}{section}
\newcommand{\s}{\textit{s}}
\newcommand{\ns}{\textit{ns}}
\newcommand{\Cs}{C_\s}
\newcommand{\Cns}{C_\ns}
\newcommand{\Cr}{C_{\textit{r}}}
\newcommand{\Ns}{N_\s}
\newcommand{\Nns}{N_\ns}
\newcommand{\CM}{\textsc{cm}}
\newcommand{\nonCM}{{\textrm{non-}\textsc{cm}}}
\newcommand{\Excep}{\operatorname{Excep}}
\title[Cartan Images and $\ell$-Torsion Points of Elliptic Curves]{Cartan Images and $\ell$-Torsion Points of Elliptic Curves with Rational $j$-Invariant}
\author{Oron Y. Propp}
\address{Department of Mathematics, Massachusetts Institute of Technology, Cambridge, MA 02139, USA}
\email{opropp@mit.edu}
\begin{document}

\begin{abstract}
Let $\ell$ be an odd prime and $d$ a positive integer. We determine when there exists a degree-$d$ number field $K$ and an elliptic curve $E/K$ with $j(E)\in\Q\setminus\{0,1728\}$ for which $E(K)_\tor$ contains a point of order $\ell$. We also determine when there exists such a pair $(K,E)$ for which the image of the associated mod-$\ell$ Galois representation is contained in a Cartan subgroup or its normalizer, conditionally on a conjecture of Sutherland. We do the same under the stronger assumption that $E$ is defined over $\Q$.
\end{abstract}

\maketitle

\section{Introduction}

Let~$E$ be an elliptic curve over a number field~$K$ with algebraic closure~$\Kbar$. For an odd prime~$\ell$, let~$E[\ell]$ denote the $\ell$-torsion subgroup of~$E(\Kbar)$, which we recall is a free rank-$2$ $\Z/\ell\Z$-module. The absolute Galois group~$\Gal(\Kbar/K)$ acts on~$E[\ell]$ via its action on the coordinates of its points, which induces a continuous Galois representation
\begin{equation*}
\rho_{E,\ell}\colon\Gal(\Kbar/K)\to\Aut(E[\ell])\simeq\GL_2(\Z/\ell\Z),
\end{equation*}
where the isomorphism with~$\GL_2(\Z/\ell\Z)$ depends on a choice of a $\Z/\ell\Z$-basis of~$E[\ell]$. Thus, we may regard the image of~$\rho_{E,\ell}$ as a subgroup of $\GL_2(\Z/\ell\Z)$ determined up to conjugacy. Moreover, $\rho_{E,\ell}$ factors through the finite Galois group~$\Gal(K(E[\ell])/K)$, where the $\ell$-torsion field~$K(E[\ell])$ is obtained by adjoining to $K$ the coordinates of all $\ell$-torsion points of~$E$.


The problem of identifying the possible images of these Galois representations is closely related to that of determining the possible torsion subgroups and isogenies admitted by elliptic curves over a particular number field. Mazur famously classified the possible torsion subgroups~$E(\Q)_\tor$ in \cite{mazur1977} and the possible $\ell$-isogenies of an elliptic curve~$E/\Q$ in \cite{mazur1978}. Kamienny, Kenku and Momose generalized Mazur's results on torsion subgroups to quadratic number fields in \cite{kamienny1992,kenku1988}, and though no complete characterization for higher-degree number fields is known, there has been recent progress towards characterizing the cubic case \cite{jeon2011,najman2012a,jeon2016,bruin2016,wang2015,najman2016}, the quartic case \cite{jeon2013,najman2012b,gonzalezjimenez2016a,chou2016}, and the quintic case \cite{gonzalezjimenez2016b,clark2014}. In particular, the set of torsion subgroups that arise for infinitely many $\Qbar$-isomorphism classes of elliptic curves defined over number fields of degree $d$ has been determined for $d=3,4,5,6$ \cite{jeon2004,jeon2006,derickx}. The growth of torsion subgroups of elliptic curves upon base change was further studied in \cite{gonzalezjimenez2016c,lozanorobledo2015a}. Using Galois representations, Lozano-Robledo has explicitly characterized the set of primes~$\ell$ for which an elliptic curve~$E/\Q$ has a point of order~$\ell$ over a number field of degree at most~$d$, assuming a positive answer to Serre's uniformity problem \cite{lozanorobledo2013}. The same was done by Najman for cyclic $\ell$-isogenies of elliptic curves with rational $j$-invariant \cite{najman2015}.

We aim to give a more precise characterization of the degrees of number fields over which elliptic curves exhibit certain properties deducible from the images of their Galois representations. Recently, Sutherland developed an algorithm to efficiently compute Galois images of elliptic curves, and conjectured the following stronger version of Serre's uniformity problem in \cite[Conj.~1.1]{sutherland2016} based on data from some 140 million elliptic curves, including all curves of conductor up to 400,000 \cite{cremonatable}:
\begin{conjecture}[Sutherland]
\label{conj:sutherland}
Let~$E/\Q$ be an elliptic curve without complex multiplication and let $\ell$ be a prime. Then the image of~$\rho_{E,\ell}$ is either equal to~$\GL_2(\Z/\ell\Z)$ or conjugate to one of the~$63$ exceptional groups listed in Table~\ref{tab:exim} (see also \cite[Tables~3,4]{sutherland2016}).
\end{conjecture}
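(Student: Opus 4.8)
\section*{Proof proposal}

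The plan is to reduce Conjecture~\ref{conj:sutherland} to a collection of statements about rational points on modular curves, one for each prime~$\ell$ and each relevant subgroup of~$\GL_2(\Z/\ell\Z)$. Recall Dickson's classification of the subgroups of~$\GL_2(\Z/\ell\Z)$: up to conjugacy, a subgroup~$H$ not containing~$\SL_2(\Z/\ell\Z)$ is of one of four types --- contained in a Borel subgroup; contained in the normalizer of a split Cartan subgroup; contained in the normalizer of a nonsplit Cartan subgroup; or of exceptional type, meaning its image in~$\PGL_2(\Z/\ell\Z)$ is isomorphic to~$A_4$, $S_4$, or~$A_5$. Since the determinant of the image of~$\rho_{E,\ell}$ is all of~$(\Z/\ell\Z)^\times$, a proper Galois image is always of one of these four types. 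For each such~$H$ not already accounted for (up to conjugacy) by Table~\ref{tab:exim}, attach the modular curve~$X_H/\Q$ coarsely parametrizing elliptic curves whose mod-$\ell$ image is conjugate into~$H$. Then Conjecture~\ref{conj:sutherland} is equivalent to the assertion that, for every prime~$\ell$ and every such~$H$, the curve~$X_H$ has no rational points other than cusps and points with complex multiplication (the latter always having image in a Cartan normalizer and so never contradicting the claim). The task is then to run over all~$\ell$ and all such~$H$ and verify this.

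First I would treat the small primes, say~$\ell\le13$, where the~$63$ exceptional groups of Table~\ref{tab:exim} are concentrated. Here the curves~$X_H$ have small genus and can in principle be analyzed explicitly: a genus-$0$ curve with a rational point is rational and produces the infinite families recorded in the table; a genus-$1$ curve is a twist of an elliptic curve, so one computes its Mordell--Weil group and lists its rational points; and the finitely many genus-$\ge2$ curves require Chabauty--Coleman or its refinements, the hardest instance being the ``cursed curve''~$X_{\ns}^+(13)$, whose rational points --- all cuspidal or \CM{} --- were determined via quadratic Chabauty. The Borel case for~$\ell\le13$ reduces to Mazur's determination of~$X_0(\ell)(\Q)$ together with the analysis of~$X_0(\ell^2)$ and its twists.

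The main obstacle is the large-prime regime, where one must rule out every proper image except possibly a Cartan normalizer. That a non-\CM{} curve has no Borel mod-$\ell$ image for~$\ell\ge17$ follows from Mazur's results on~$\ell$-isogenies~\cite{mazur1978}; that exceptional ($A_4$, $S_4$, $A_5$) images cannot occur for~$\ell$ large is classical, going back to Serre; and that split-Cartan-normalizer images cannot occur for~$\ell$ large --- and, once the residual small cases are handled, for all~$\ell\ge17$ --- follows from the work of Bilu--Parent and Bilu--Parent--Rebolledo on~$X_{\s}^+(\ell)$. The genuinely open case, which is precisely why the statement is still only a conjecture, is the nonsplit Cartan normalizer: one would need to show that~$X_{\ns}^+(\ell)(\Q)$ consists solely of cusps and \CM{} points for \emph{every}~$\ell\ge17$, uniformly in~$\ell$. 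The natural approach is quadratic (or higher) Chabauty combined with a Mordell--Weil sieve on each individual~$X_{\ns}^+(\ell)$, but no method is presently known that handles all~$\ell$ at once; absent such a method, the conjecture rests on Sutherland's computation of the mod-$\ell$ images of some~$140$ million elliptic curves~\cite{sutherland2016}, none of which violates it. For the purposes of this paper, then, I would simply invoke Conjecture~\ref{conj:sutherland} as a hypothesis wherever the list of~$63$ groups is needed, flagging each such dependence explicitly.
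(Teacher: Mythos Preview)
The statement under consideration is a \emph{conjecture}, and the paper offers no proof of it: Conjecture~\ref{conj:sutherland} is stated as an open problem, supported only by Sutherland's computational evidence, and is then assumed as a standing hypothesis in Theorems~\ref{thm:1}, \ref{thm:2}, and~\ref{thm:3}. There is therefore no ``paper's own proof'' to compare against.

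Your write-up correctly recognizes this. The outline you give of what a proof would require --- Dickson's classification, the modular-curve reformulation, the known results for Borel (Mazur), split Cartan normalizer (Bilu--Parent--Rebolledo), and exceptional images (Serre), together with the genuinely open nonsplit Cartan normalizer case --- is accurate and well-informed, and your conclusion that one must simply assume the conjecture and flag the dependence is exactly what the paper does. So your final paragraph is the right answer; the preceding discussion is good context but should not be presented as a ``proof proposal,'' since no proof is possible with current technology and none is attempted in the paper.
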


\begin{table}
\begin{tabular}{lllclll}
\toprule
$\ell$&Index&Generators&\qquad\qquad&$\ell$&Index&Generators\\
\midrule
\multirow{3}{*}{2} & 6 & $ $ & & \multirow{9}{*}{7} & 48 & $\smallmat{4}{0}{0}{6},\smallmat{1}{1}{0}{1}$ \\
 & 3 & $\smallmat{1}{1}{0}{1}$ & &  & 28 & $\smallmat{0}{6}{1}{0},\smallmat{3}{0}{0}{5},\smallmat{1}{0}{0}{3}$ \\
 & 2 & $\smallmat{0}{1}{1}{1}$ & &  & 24 & $\smallmat{6}{0}{0}{6},\smallmat{1}{0}{0}{3},\smallmat{1}{1}{0}{1}$ \\\cmidrule{1-3}
\multirow{7}{*}{3} & 24 & $\smallmat{1}{0}{0}{2}$ & & & 24 & $\smallmat{6}{0}{0}{6},\smallmat{3}{0}{0}{1},\smallmat{1}{1}{0}{1}$ \\
 & 12 & $\smallmat{2}{0}{0}{2},\smallmat{1}{0}{0}{2}$ & &  & 24 & $\smallmat{6}{0}{0}{6},\smallmat{2}{0}{0}{5},\smallmat{1}{1}{0}{1}$ \\
& 8 & $\smallmat{1}{0}{0}{2},\smallmat{1}{1}{0}{1}$ & & & 21 & $\smallmat{1}{0}{0}{6},\smallmat{2}{5}{4}{2}$ \\
 & 8 & $\smallmat{2}{0}{0}{1},\smallmat{1}{1}{0}{1}$ & & & 16 & $\smallmat{2}{0}{0}{4},\smallmat{1}{0}{0}{3},\smallmat{1}{1}{0}{1}$ \\
 & 6 & $\smallmat{2}{0}{0}{2},\smallmat{0}{2}{1}{0},\smallmat{1}{0}{0}{2}$ & &  & 16 & $\smallmat{2}{0}{0}{4},\smallmat{3}{0}{0}{1},\smallmat{1}{1}{0}{1}$ \\
 & 4 & $\smallmat{2}{0}{0}{2},\smallmat{1}{0}{0}{2},\smallmat{1}{1}{0}{1}$ & &  & 8 & $\smallmat{3}{0}{0}{5},\smallmat{1}{0}{0}{3},\smallmat{1}{1}{0}{1}$ \\\cmidrule{5-7}
 & 3 & $\smallmat{1}{0}{0}{2},\smallmat{2}{1}{2}{2}$ & & \multirow{7}{*}{11} & 120 & $\smallmat{4}{0}{0}{6},\smallmat{1}{1}{0}{1}$ \\\cmidrule{1-3}
\multirow{15}{*}{5} & 120 & $\smallmat{1}{0}{0}{2}$ & & & 120 & $\smallmat{6}{0}{0}{4},\smallmat{1}{1}{0}{1}$ \\
 & 120 & $\smallmat{3}{0}{0}{4}$ & & & 120 & $\smallmat{5}{0}{0}{7},\smallmat{1}{1}{0}{1}$ \\
 & 60 & $\smallmat{4}{0}{0}{4},\smallmat{1}{0}{0}{2}$ & & & 120 & $\smallmat{7}{0}{0}{5},\smallmat{1}{1}{0}{1}$ \\
 & 30 & $\smallmat{2}{0}{0}{3},\smallmat{0}{1}{3}{0}$ & & & 60 & $\smallmat{10}{0}{0}{10},\smallmat{4}{0}{0}{6},\smallmat{1}{1}{0}{1}$ \\
 & 30 & $\smallmat{2}{0}{0}{3},\smallmat{1}{0}{0}{2}$ & & & 60 & $\smallmat{10}{0}{0}{10},\smallmat{5}{0}{0}{7},\smallmat{1}{1}{0}{1}$ \\
 & 24 & $\smallmat{1}{0}{0}{2},\smallmat{1}{1}{0}{1}$ & & & 55 & $\smallmat{1}{0}{0}{10},\smallmat{3}{5}{8}{3}$ \\\cmidrule{5-7}
 & 24 & $\smallmat{2}{0}{0}{1},\smallmat{1}{1}{0}{1}$ & & \multirow{11}{*}{13} & 91 & $\smallmat{3}{0}{12}{9},\smallmat{2}{0}{0}{2},\smallmat{9}{5}{0}{6}$ \\
 & 24 & $\smallmat{4}{0}{0}{3},\smallmat{1}{1}{0}{1}$ & &  & 56 & $\smallmat{3}{0}{0}{9},\smallmat{1}{0}{0}{2},\smallmat{1}{1}{0}{1}$ \\
 & 24 & $\smallmat{3}{0}{0}{4},\smallmat{1}{1}{0}{1}$ & &  & 56 & $\smallmat{3}{0}{0}{9},\smallmat{2}{0}{0}{1},\smallmat{1}{1}{0}{1}$ \\
 & 15 & $\smallmat{0}{4}{1}{0},\smallmat{2}{0}{0}{3},\smallmat{1}{0}{0}{2}$ & &  & 56 & $\smallmat{3}{0}{0}{9},\smallmat{4}{0}{0}{7},\smallmat{1}{1}{0}{1}$ \\
 & 12 & $\smallmat{4}{0}{0}{4},\smallmat{1}{0}{0}{2},\smallmat{1}{1}{0}{1}$ & &  & 56 & $\smallmat{3}{0}{0}{9},\smallmat{7}{0}{0}{4},\smallmat{1}{1}{0}{1}$ \\
 & 12 & $\smallmat{4}{0}{0}{4},\smallmat{2}{0}{0}{1},\smallmat{1}{1}{0}{1}$ & &  & 42 & $\smallmat{5}{0}{0}{8},\smallmat{1}{0}{0}{2},\smallmat{1}{1}{0}{1}$ \\
 & 10 & $\smallmat{1}{0}{0}{4},\smallmat{2}{3}{4}{2}$ & &  & 42 & $\smallmat{5}{0}{0}{8},\smallmat{2}{0}{0}{1},\smallmat{1}{1}{0}{1}$ \\
 & 6 & $\smallmat{2}{0}{0}{3},\smallmat{1}{0}{0}{2},\smallmat{1}{1}{0}{1}$ & &  & 42 & $\smallmat{5}{0}{0}{8},\smallmat{4}{0}{0}{7},\smallmat{1}{1}{0}{1}$ \\
 & 5 & $\smallmat{0}{3}{3}{4},\smallmat{2}{0}{0}{2},\smallmat{3}{0}{4}{4}$ & &  & 28 & $\smallmat{4}{0}{0}{10},\smallmat{1}{0}{0}{2},\smallmat{1}{1}{0}{1}$ \\\cmidrule{1-3}
\multirow{7}{*}{7} & 112 & $\smallmat{2}{0}{0}{4},\smallmat{0}{1}{4}{0}$ & &  & 28 & $\smallmat{4}{0}{0}{10},\smallmat{2}{0}{0}{1},\smallmat{1}{1}{0}{1}$ \\
 & 56 & $\smallmat{3}{0}{0}{5},\smallmat{0}{1}{4}{0}$ & & & 14 & $\smallmat{2}{0}{0}{7},\smallmat{1}{0}{0}{2},\smallmat{1}{1}{0}{1}$ \\\cmidrule{5-7}
 & 48 & $\smallmat{1}{0}{0}{3},\smallmat{1}{1}{0}{1}$ & & \multirow{2}{*}{17} & 72 & $\smallmat{4}{0}{0}{13},\smallmat{2}{0}{0}{10},\smallmat{1}{1}{0}{1}$ \\
 & 48 & $\smallmat{3}{0}{0}{1},\smallmat{1}{1}{0}{1}$ & &  & 72 & $\smallmat{4}{0}{0}{13},\smallmat{6}{0}{0}{9},\smallmat{1}{1}{0}{1}$ \\\cmidrule{5-7}
 & 48 & $\smallmat{2}{0}{0}{5},\smallmat{1}{1}{0}{1}$ & & \multirow{2}{*}{37} & 114 & $\smallmat{8}{0}{0}{14},\smallmat{1}{0}{0}{2},\smallmat{1}{1}{0}{1}$ \\
 & 48 & $\smallmat{5}{0}{0}{2},\smallmat{1}{1}{0}{1}$ & &  & 114 & $\smallmat{8}{0}{0}{14},\smallmat{2}{0}{0}{1},\smallmat{1}{1}{0}{1}$ \\
 & 48 & $\smallmat{6}{0}{0}{4},\smallmat{1}{1}{0}{1}$ & & \\
\bottomrule
\end{tabular}
\caption{Known exceptional images of $\rho_{E,\ell}$ for non-CM elliptic curves $E/\Q$ (for $\ell\le 11$, the list is complete) \cite[Tables~3,4]{sutherland2016}. The first column lists the prime $\ell$; the second column lists the index of the subgroup $G$ in $\GL_2(\Z/\ell\Z)$; and the third lists generators for $G$, as matrices acting on column vectors from the left.}
\label{tab:exim}
\end{table}

Combining this conjecture with the precise characterization of Galois images for elliptic curves over~$\Q$ with complex multiplication (CM) given in \cite[Thm.~1.14-1.16]{zywina2015}, we are able to deduce the possible subgroups of~$\GL_2(\Z/\ell\Z)$ arising as images of~$\rho_{E,\ell}$ for base changes of elliptic curves~$E/\Q$ to a number field~$K$, and for elliptic curves~$E/K$ with $j(E)\in\Q$, assuming $j(E)\ne 0,1728$.\footnote{For $\ell=2$, all conjugacy classes of subgroups of $\GL_2(\Z/\ell\Z)$ occur as the image of $\rho_{E,\ell}$ for some elliptic curve $E/\Q$, so we ignore this case henceforth.} Thus, all results in this paper hold unconditionally for elliptic curves with CM.

Let~$\Cs$ and~$\Cns$ denote the split and nonsplit Cartan subgroups of~$\GL_2(\Z/\ell\Z)$, and~$\Ns$ and~$\Nns$ their normalizers. Also, let $Z$ denote the center of $\GL_2(\Z/\ell\Z)$, that is, the subgroup of scalar matrices, and let
\begin{equation}
\label{eqn:pq}
\begin{split}
\mathcal{P}&:=\{\ell:\tLegendre{-D}{\ell}=1\text{ for all }D\in\{3,4,7,8,11,19,43,67,163\}\}=\{15073,18313,38833,\ldots\},\\
\mathcal{Q}&:=\{\ell:\tLegendre{-D}{\ell}=-1\text{ for all }D\in\{3,4,7,8,11,19,43,67,163\}\}=\{3167,8543,14423,\ldots\}.
\end{split}
\end{equation}
Let $M$ be either $Z$, $\Cs$, $\Cns$, $\Ns$, or $\Nns$. We say that a subgroup $G$ of $\GL_2(\Z/\ell\Z)$ \emph{belongs to} $M$ if it is conjugate to a subgroup of $M$, but not to a subgroup of any other of these five subgroups contained in $M$. For example, if $M$ is $Z$, this amounts to requiring that $G$ consists only of scalars; if $M$ is $\Ns$, then $G$ must be conjugate to a subgroup of $\Ns$ but not to any subgroup of $\Cs$. The inclusions among these five subgroups are depicted in the following diagram on the left:
\begin{equation}
\label{eqn:subincl}
\begin{tikzcd}[column sep=small]
\Ns\arrow[d,dash]&&\Nns\arrow[d,dash]\\
\Cs\arrow[dr,dash]&&\Cns\arrow[dl,dash]\\
&Z,&
\end{tikzcd}
\qquad\qquad\qquad
\begin{tikzcd}[column sep=0pt]
&&X_Z\arrow{dl}\arrow{dr}&&\\
X_\s,\arrow{d}&(X_\s)_{\Q(\sqrt{\pm\ell})}&&(X_\ns)_{\Q(\sqrt{\pm\ell})},&X_\ns\arrow{d}\\
X_\s^+&&&&X_\ns^+.
\end{tikzcd}
\end{equation}

Now, to any subgroup $M$ of $\GL_2(\Z/\ell\Z)$ containing $-I$, we can associate a modular curve $X_M$ defined over the field $K_M:=\Q(\zeta_\ell)^{\det M}$; thus, $X_\s:=X_{\Cs}$, $X_\ns:=X_{\Cns}$, $X_\s^+:=X_{\Ns}$, and $X_\ns^+:=X_{\Nns}$ are defined over $\Q$, whereas $X_Z$ is defined over $\Q(\sqrt{\pm\ell})\subseteq\Q(\zeta_\ell)$. The curve $X_M$ is smooth, projective, and geometrically irreducible, and comes with a natural morphism of $K_M$-schemes
\begin{equation*}
\pi_M\colon X_M\longrightarrow X(1)_{K_M}=\operatorname{Spec}K_M[j]\cup\{\infty\}=\mathbb{P}^1_{K_M}
\end{equation*}
with the property that, for an elliptic curve $E/K_M$ with $j(E)\ne 0,1728$, the image $\rho_{E,\ell}(\Gal(\overline{K_M}/K_M))$ is conjugate to a subgroup of $M$ in $\GL_2(\Z/\ell\Z)$ if and only if $j(E)\in K_M$ is the image of some $K_M$-point of $X_M$ under $\pi_M$ \cite{zywina2015}. The diagram on the right in (\ref{eqn:subincl}) shows the morphisms of these modular curves corresponding to the inclusions in the diagram on the left.

Our first result determines the degrees of number fields $K$ over which the image of the Galois representation attached to an elliptic curve can belong to each of these subgroups $M$, or equivalently, the degrees of $K$-points on the modular curves $X_M$:
\begin{theorem}
\label{thm:1}
Assume Conjecture~\ref{conj:sutherland}. Let $\ell$ be an odd prime, and let $M$ be one of the subgroups $Z$, $\Cs$, $\Cns$, $\Ns$, or $\Nns$ of $\GL_2(\Z/\ell\Z)$. There exists a set $\mathcal{S}_M(\ell)$, identified explicitly in Table~\ref{tab:rml}, such that the following holds: there exists a degree-$d$ number field $K$ and an elliptic curve $E/\Q$ with $j(E)\ne 0,1728$ for which $\rho_{E,\ell}(\Gal(\Kbar/K))$ belongs to $M$ if and only if $s\mid d$ for some $s\in\mathcal{S}_M(\ell)$. The set \(\mathcal{S}_M(\ell)\) satisfies the same condition with $E/\Q$ replaced by an elliptic curve $E/K$ with $j(E)\in\Q\setminus\{0,1728\}$.

Consequently, there exists a non-cuspidal $K$-point on the modular curve $X_M$ that is not in the image of any of the morphisms in (\ref{eqn:subincl}) for some degree-$d$ number field $K$ if and only if $s\mid d$ for some $s\in\mathcal{S}_M(\ell)$.
\end{theorem}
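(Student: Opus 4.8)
The plan is to derive this corollary from the first part of the theorem by translating it through the modular interpretation of $X_M$ and of the morphisms in~(\ref{eqn:subincl}). The first step is a refinement of the stated property of $X_M$: if $M'$ is another of the five subgroups with $M'\subsetneq M$, then for a number field $K$ and a non-cuspidal point $P\in X_M(K)$, the point $P$ is the image of a $K$-rational point of $X_{M'}$ under the corresponding morphism in~(\ref{eqn:subincl}) — which presupposes that morphism is defined over $K$, i.e.\ that $K\supseteq K_{M'}$ — if and only if some (equivalently, every) elliptic curve $E/K$ with $j(E)=\pi_M(P)$ has $\rho_{E,\ell}(\Gal(\Kbar/K))$ conjugate into $M'$. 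One may pass from ``some'' to ``every'' because all five subgroups contain $-I$, so that replacing $E$ by a quadratic twist — which alters $\rho_{E,\ell}$ by a quadratic character — does not change whether its image is conjugate into $M'$; and since the cusps of $X_M$ all lie over $j=\infty$, ``non-cuspidal'' corresponds to $\pi_M(P)\neq\infty$.

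Chaining this over the $M'\subsetneq M$ occurring in~(\ref{eqn:subincl}), one obtains: for a number field $K$ there is a non-cuspidal $P\in X_M(K)$ with $\pi_M(P)\notin\{0,1728\}$ that is not the image of a $K$-rational point under any of the morphisms in~(\ref{eqn:subincl}) if and only if there is an elliptic curve $E/K$ with $j(E)\in K\setminus\{0,1728\}$ whose image $\rho_{E,\ell}(\Gal(\Kbar/K))$ belongs to $M$. (When $M=Z$ there are no such morphisms to worry about, while a scalar image already forces $K\supseteq\Q(\sqrt{\pm\ell})$, so no hypothesis is lost; the finitely many points with $\pi_M(P)\in\{0,1728\}$ fall outside the scope of the first part, but being CM values they are governed unconditionally by~\cite{zywina2015}, and one checks directly that they introduce no new degrees.) The reverse implication of the corollary now follows at once from the first part of the theorem: if $s\mid d$ for some $s\in\mathcal{S}_M(\ell)$, there is a degree-$d$ number field $K$ and an elliptic curve $E/\Q$ — a fortiori one over $K$ with $j(E)\in\Q\setminus\{0,1728\}$ — whose image belongs to $M$, whence $K\supseteq K_M$ and $\pi_M^{-1}(j(E))$ meets $X_M(K)$ in a non-cuspidal point of the required kind.

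For the forward implication, such a point yields an elliptic curve $E/K$ with $j(E)=:j_0\in K\setminus\{0,1728\}$ whose image belongs to $M$, and one must conclude that $s\mid d$ for some $s\in\mathcal{S}_M(\ell)$. The subtlety — and the step I expect to be the main obstacle — is that $j_0$ need not be rational, so the first part of the theorem does not apply verbatim; one must show that permitting irrational $j$-invariants does not enlarge the set of admissible degrees. The reduction I would run is: put $e=[\Q(j_0):\Q]$ (so $e\mid d$), fix $E_0/\Q(j_0)$ with $j(E_0)=j_0$, and observe — using $-I\in M$ and that $E$ is a quadratic twist over $K$ of the base change of $E_0$ — that $\rho_{E_0,\ell}$ has an image belonging to $M$ over a subfield of $\Q(j_0)(E_0[\ell])$ of degree dividing $d$ over $\Q$. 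What remains is to transfer this witness to an elliptic curve over $\Q$, or equivalently to show that $X_M$ carries no non-cuspidal point avoiding the images of the morphisms in~(\ref{eqn:subincl}) over a number field whose degree is not a multiple of some element of $\mathcal{S}_M(\ell)$. This is precisely where Conjecture~\ref{conj:sutherland} and the arithmetic of $X_\s$, $X_\s^+$, $X_\ns$, $X_\ns^+$, $X_Z$ enter essentially: for small $\ell$ it is a finite comparison against the known classifications of low-degree points on these split- and nonsplit-Cartan (normalizer) modular curves, while for large $\ell$ one invokes lower bounds on the gonalities of these curves, which rule out points of small degree outright — so that any point on $X_M$ of a degree not already covered by $\mathcal{S}_M(\ell)$ must, in fact, have rational $j$-invariant, reducing to the first part. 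Everything beyond this is bookkeeping: matching the five subgroups, the four morphisms relating their modular curves, and the notion ``belongs to $M$'' against the group theory underlying the explicit determination of $\mathcal{S}_M(\ell)$.
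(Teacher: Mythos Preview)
You have correctly spotted the essential issue: the first part of the theorem constrains $j(E)\in\Q$, whereas the modular-curve clause, read literally, quantifies over all non-cuspidal $K$-points of $X_M$, including those with irrational $j$-invariant. The paper does not address this; in Section~3 only the first part is proved, and the ``Consequently'' is treated as a direct restatement via the moduli interpretation of $X_M$ quoted just before the theorem --- a restatement that is only valid once one restricts to points with $\pi_M(P)\in\Q$.

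Your plan to close the gap by gonality bounds and finite checks cannot succeed, because the literal statement is false. Take $\ell=15073\in\mathcal{P}$ and $M=\Nns$, so that $\mathcal{S}_{\Nns}(\ell)=\{(\ell-1)/2\}=\{7536\}$ by Table~\ref{tab:rml}. Let $F=\Q(\sqrt{-5})$, an imaginary quadratic field of class number~$2$, and let $E$ be an elliptic curve with CM by $\mathcal{O}_F$; then $\Q(j(E))=\Q(\sqrt{5})=:K$ and $E$ has a model over $K$. One computes $\tLegendre{-5}{15073}=-1$, so $\ell$ is inert in $F$; since $F\not\subset K$, the image $\rho_{E,\ell}(\Gal(\Kbar/K))$ lies in $\Nns(\ell)$ but not in $\Cns(\ell)$, i.e.\ it \emph{belongs} to $\Nns$. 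This furnishes a non-cuspidal $K$-point on $X_\ns^+$, with $j\ne 0,1728$, that is not in the image of $X_\ns\to X_\ns^+$, yet $[K:\Q]=2$ is not divisible by~$7536$. Analogous class-number-two CM constructions produce low-degree points defeating your proposed gonality reduction for other $M$ and other $\ell\in\mathcal{P}\cup\mathcal{Q}$.

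So the situation is this: if the ``Consequently'' clause is read with the implicit restriction $\pi_M(P)\in\Q$, it is an immediate translation of the first part through the moduli description of $X_M$, and the content of your first two paragraphs already gives that translation. If it is read as you read it --- for arbitrary $K$-points --- then it is stronger than what the paper proves and, in fact, stronger than what is true. Your proposal is aiming at a false target; the honest fix is to add the rational-$j$ hypothesis to the modular-curve clause, after which the derivation is the tautology you wrote down at the start.
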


\begin{table}[h]
$$
\begin{tabu}{rlllll}
\toprule
\ell&Z&\Cs&\Cns&\Ns&\Nns\\
\midrule
3&2&1&2&1&1\\
5&4&1&2&1&1\\
7&6,14,16&2,3,7&2&1&1\\
13&24,28,52&2,13&2&1&1\\
17&32,36,272&2,17&2&1&1\\
37&72,76,1332&2,37&2&1&1\\
11,19,43,67,163&2(\ell-1),2\ell,2(\ell+1)&2,\ell&2&1&1\\
\in\mathcal{P}&2(\ell-1)&2&\ell-1&1&(\ell-1)/2\\
\in\mathcal{Q}&2(\ell+1)&\ell+1&2&(\ell+1)/2&1\\
\text{other}&2(\ell-1),2(\ell+1)&2&2&1&1\\
\bottomrule
\end{tabu}
$$
\caption{Values of $\mathcal{S}_M(\ell)$ (see Theorem~\ref{thm:1}).}
\label{tab:rml}
\end{table}

We also identify analogous sets $\mathcal{S}_M^\CM(\ell)$ and $\mathcal{S}_M^\nonCM(\ell)$ for elliptic curves with and without CM, listed in Tables~\ref{tab:rmcml} and \ref{tab:rmnoncml}, respectively. To replace ``belonging'' in the theorem with the ordinary notion of containment, and remove the corresponding restriction related to the morphisms in (\ref{eqn:subincl}), simply take unions of the $\mathcal{S}_M(\ell)$ along the subgroup inclusion lattice in (\ref{eqn:subincl}) (e.g., replace $\mathcal{S}_{\Nns}(\ell)$ with $\mathcal{S}_{\Nns}(\ell)\cup\mathcal{S}_{\Cns}(\ell)\cup\mathcal{S}_{Z}(\ell)$). For interpretations of the latter statement on modular curves in terms of moduli data, see the table on page~2 of \cite{rebolledo}.

\begin{table}
$$
\begin{tabu}{rlllll}
\toprule
\ell&Z&\Cs&\Cns&\Ns&\Nns\\
\midrule
3,7,11,19,43,67,163&2(\ell-1),2\ell,2(\ell+1)&2,\ell&2&1&1\\
\in\mathcal{P}&2(\ell-1)&2&\ell-1&1&(\ell-1)/2\\
\in\mathcal{Q}&2(\ell+1)&\ell+1&2&(\ell+1)/2&1\\
\text{other}&2(\ell-1),2(\ell+1)&2&2&1&1\\
\bottomrule
\end{tabu}
$$
\caption{Values of $\mathcal{S}_{M}^\CM(\ell)$ (see Theorem~\ref{thm:1}).}
\label{tab:rmcml}
\end{table}

\begin{table}
$$
\begin{tabu}{rlllll}
\toprule
\ell&Z&\Cs&\Cns&\Ns&\Nns\\
\midrule
3&2&1&2&1&1\\
5&4&1&2&1&1\\
7&6,14,16&2,3,7&2&1&1\\
11&24,110&11,12&2&6&1\\
13&24,52&6,8,13&12&3,4&6,26\\
17&272&17&272&153&136\\
37&1332&37&1332&703&666\\
\text{other}&\ell(\ell+1)(\ell-1)&\ell(\ell+1)&\ell(\ell-1)&\ell(\ell+1)/2&\ell(\ell-1)/2\\
\bottomrule
\end{tabu}
$$
\caption{Values of $\mathcal{S}_M^\nonCM(\ell)$ (see Theorem~\ref{thm:1}).}
\label{tab:rmnoncml}
\end{table}

Our next result characterizes the degrees of abelian sub-extensions of $\ell$-torsion fields $\Q(E[\ell])$:
\begin{theorem}
\label{thm:2}
Assume Conjecture~\ref{conj:sutherland}. Given an odd prime $\ell$, there exists a  set $\mathcal{K}(\ell)$, identified explicitly in (\ref{eqn:vwl}), such that the following holds: there exists a degree-$d$ number field $K$ contained in the $\ell$-torsion field of an elliptic curve $E/\Q$ with $j(E)\ne 0,1728$ for which $\Q(E[\ell])/K$ is an abelian extension precisely when $d\in\mathcal{K}(\ell)$. The set $\mathcal{K}(\ell)$ satisfies the same condition with $E/\Q$ replaced by an elliptic curve $E/K$ with $j(E)\in\Q\setminus\{0,1728\}$.
\end{theorem}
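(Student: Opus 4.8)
The plan is to recast the statement as a problem about subgroups of the finite groups that occur as mod-$\ell$ Galois images, to solve that by classifying their abelian subgroups, and to deduce the rational-$j$ variant by a twisting argument. \emph{First}, for an elliptic curve $E/\Q$ with $j(E)\ne 0,1728$ put $G:=\rho_{E,\ell}(\Gal(\Qbar/\Q))\cong\Gal(\Q(E[\ell])/\Q)$, a subgroup of $\GL_2(\Z/\ell\Z)$ well-defined up to conjugacy. By the Galois correspondence for $\Q(E[\ell])/\Q$, the subfields $K$ with $\Q(E[\ell])/K$ abelian are exactly the fixed fields of abelian subgroups $H\le G$, and for such a $K$ one has $[K:\Q]=[G:H]$. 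Since each $G$ appearing in the known classification is genuinely realized by some elliptic curve over $\Q$, it follows that $d\in\mathcal{K}(\ell)$ precisely when there exist a realizable image $G$ and an abelian subgroup $H\le G$ with $[G:H]=d$. By Conjecture~\ref{conj:sutherland} together with the classification of CM Galois images over $\Q$ cited in the introduction, the realizable $G$ (up to conjugacy) are $\GL_2(\Z/\ell\Z)$, the exceptional groups listed under $\ell$ in Table~\ref{tab:exim}, and the CM images; each CM image is contained in a Cartan normalizer $\Ns$ or $\Nns$ and is pinned down up to conjugacy by whether $\ell$ splits, is inert, or ramifies in each of the nine imaginary quadratic fields of (\ref{eqn:pq}) — the data recorded by the sets $\mathcal{P},\mathcal{Q}$ and by the list of CM discriminants divisible by $\ell$.

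\emph{Second}, since every subgroup of an abelian group is abelian and a finite abelian group has a subgroup of every order dividing its own order, it suffices for each realizable $G$ to list the maximal abelian subgroups $H\le G$ up to conjugacy together with the index $[G:H]$; then $\mathcal{K}(\ell)=\bigcup_G\bigcup_H\{[G:H]\cdot e:e\mid|H|\}$. A centralizer computation shows the maximal abelian subgroups of $\GL_2(\Z/\ell\Z)$ are, up to conjugacy, the split Cartan $\Cs$, the nonsplit Cartan $\Cns$, and the cyclic centralizer $\left\{\smallmat{a}{b}{0}{a}\right\}$ of a regular unipotent element, of orders $(\ell-1)^2$, $\ell^2-1$, $\ell(\ell-1)$ and hence indices $\ell(\ell+1)$, $\ell(\ell-1)$, $\ell^2-1$ in $\GL_2(\Z/\ell\Z)$; the last of these corresponds to a modular curve absent from diagram~(\ref{eqn:subincl}) and is the feature of Theorem~\ref{thm:2} not already present in Theorem~\ref{thm:1}. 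For a CM image $G$ one uses that $\Ns$ and $\Nns$ contain no nontrivial unipotent: the maximal abelian subgroups of $G$ are $G\cap\Cs$ (resp.\ $G\cap\Cns$) and the intersections of $G$ with the order-$2(\ell-1)$ centralizers of the elements of the normalizer lying outside the Cartan, and in the "generic" case, where by Zywina's theorems $G$ is the full normalizer $\Ns$ or $\Nns$, these have index $2$ and $\ell-1$ (resp.\ $2$ and $\ell+1$) in $G$ — which is why CM curves are responsible for the \emph{small} members of $\mathcal{K}(\ell)$. For $G$ an exceptional group, or a CM image in the ramified case $\ell\mid D$ (where $G$ is itself abelian, contained in a conjugate of $\left\{\smallmat{a}{b}{0}{a}\right\}$), or a CM image for a non-maximal order, the maximal abelian subgroups and their indices are worked out directly from the generators in Table~\ref{tab:exim} and from Zywina's explicit descriptions.

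\emph{Third}, taking the union of all these contributions and simplifying yields $\mathcal{K}(\ell)$, organized into the same cases as the tables accompanying Theorem~\ref{thm:1}: for $\ell\in\mathcal{P}$ (resp.\ $\mathcal{Q}$) only split (resp.\ inert) CM images occur, so only the $\Ns$- (resp.\ $\Nns$-) contributions appear alongside those of $\GL_2(\Z/\ell\Z)$; for $\ell\in\{11,19,43,67,163\}$ a ramified CM curve contributes the small values including $1$; for $\ell\in\{3,5,7,13,17,37\}$ the exceptional groups of Table~\ref{tab:exim} contribute further special values (for instance, at $\ell=13$ the index-$14$ exceptional group is the full Borel subgroup and contains a split Cartan of index $13$, whence $13\in\mathcal{K}(13)$); and for the remaining primes both normalizer types occur and $\mathcal{K}(\ell)$ is the union of the $\GL_2(\Z/\ell\Z)$-, $\Ns$- and $\Nns$-contributions. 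Finally, for the variant in which $E$ is defined over a number field with $j(E)\in\Q\setminus\{0,1728\}$ rather than over $\Q$: such an $E$ is a quadratic twist, after base change, of the curve $E_0/\Q$ with the same $j$-invariant, so its $\ell$-division field differs from that of $E_0$ by adjoining at most one square root, and twisting by a quadratic character multiplies $\rho_{E,\ell}$ by a scalar-valued character, which changes neither whether the image enlarged by $-I$ lies in a given maximal abelian subgroup of $\GL_2(\Z/\ell\Z)$ nor the degrees involved; hence the same set $\mathcal{K}(\ell)$ governs this case, just as in the proof of Theorem~\ref{thm:1}.

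I expect the main obstacle to lie in the last, bookkeeping-intensive part of Step~3: for each of the (up to) $63$ exceptional groups, each ramified CM curve, and each non-maximal-order CM image one must identify the conjugacy classes of maximal abelian subgroups, compute their indices in $G$, and then verify that the resulting finite union is exactly the set (\ref{eqn:vwl}) — the principal danger being to overlook an abelian subgroup of small index, with the $\left\{\smallmat{a}{b}{0}{a}\right\}$-type subgroups (invisible to the five-subgroup framework of Theorem~\ref{thm:1}) the likeliest to be missed.
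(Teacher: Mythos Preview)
Your proposal is correct and follows essentially the same route as the paper: reduce via the Galois correspondence to indices of abelian subgroups of the realizable images $G_E(\ell)$, classify abelian subgroups via the three maximal abelian types $\Cs$, $\Cns$, and the ramified Cartan $\Cr(\ell)=\{\smallmat{a}{b}{0}{a}\}$ (the new ingredient beyond Theorem~\ref{thm:1}), treat the exceptional and CM images case by case using Table~\ref{tab:exim} and Zywina's classification, and handle the rational-$j$ variant by the twist argument of Lemma~\ref{lem:twistsamemaxsubgp}. The only organizational difference is that the paper packages the computation by reusing the sets $\mathcal{E}_M(\ell)$ already assembled for Theorem~\ref{thm:1} and adjoining the $\Cr$-contribution via a short Lemma~\ref{lem:A}, rather than re-enumerating maximal abelian subgroups of each $G_E$ from scratch.
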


As above, we explicitly identify analogous subsets $\mathcal{K}^\CM(\ell),\mathcal{K}^\nonCM(\ell)\subseteq\mathcal{K}(\ell)$ for elliptic curves with and without CM, listed in (\ref{eqn:vwl}). Note that Theorem~\ref{thm:1} already implies that there exists a degree-$d$ number field $K$ and an elliptic curve $E/\Q$ with $j(E)\ne 0,1728$ for which the extension $\Q(E[\ell])/(K\cap\Q(E[\ell])$ is abelian precisely when $r\mid d$ for some $r\in\mathcal{S}_Z(\ell)\cup\mathcal{S}_{\Cs}(\ell)\cup\mathcal{S}_{\Cns}(\ell)$; as in the statement of Theorem~\ref{thm:1}, $E$ may be replaced by an elliptic curve $E/K$ with $j(E)\in\Q\setminus\{0,1728\}$. However, Theorem~\ref{thm:2} provides a more precise characterization of abelian subfields of $\ell$-torsion fields by identifying when $\Q(E[\ell])/K$ is abelian, rather than $\Q(E[\ell])/(K\cap\Q(E[\ell]))$. Table~\ref{tab:ldabext} illustrates the first result for all pairs $(\ell,d)$ with $\ell\le 97$ and $d\le 45$; a light square indicates that such a number field and elliptic curve exist, and a dark square indicates that they do not \cite{code}. In particular, as shown unconditionally in \cite[Thm.~1.1]{lozanorobledo2015}, $\Q(E[\ell])/\Q$ can only be abelian for $\ell=3,5$, though naturally for any elliptic curve $E/\Q$ with CM there exists a quadratic field $K$ for which $\Q(E[\ell])/K$ is abelian.
\begin{table}
\begin{tikzpicture}[x=1em,y=-1em]
\node at (-0.4,0.3) {$\ell$};
\node at (0.3,-0.4) {$d$};
\draw (1,1) -- (-0.5,-0.5);

\begin{filecontents}{collab}
3 5 7 11 13 17 19 23 29 31 37 41 43 47 53 59 61 67 71 73 79 83 89 97
\end{filecontents}

\pgfplotstableread[col sep=space,row sep=newline]{collab}{\collab}

\pgfplotstableforeachcolumn\collab\as\col{
  \pgfplotstableforeachcolumnelement{\col}\of\collab\as\colcnt{
    \node[left] at (0.65,\col+1) {\footnotesize$\colcnt$};
  }
}

\begin{filecontents}{rowlab}
1 2 3 4 5 6 7 8 9 10 11 12 13 14 15 16 17 18 19 20 21 22 23 24 25 26 27 28 29 30 31 32 33 34 35 36 37 38 39 40 41 42 43 44 45
\end{filecontents}

\pgfplotstableread[col sep=space,row sep=newline]{rowlab}{\rowlab}

\pgfplotstableforeachcolumn\rowlab\as\col{
  \pgfplotstableforeachcolumnelement{\col}\of\rowlab\as\colcnt{
    \node at (\col+1,0) {\footnotesize$\colcnt$};
  }
}

\begin{filecontents}{bindat}
1 1 1 1 0 1 0 1 0 0 0 1 0 0 0 1 0 0 0 0 0 0 0 1 0 0 0 0 0 0 0 0 0 0 0 0 0 0 0 0 0 0 0 0 0
1 1 0 1 1 1 0 1 0 1 0 1 0 0 0 1 0 0 0 1 0 0 0 1 0 0 0 0 0 1 0 1 0 0 0 0 0 0 0 1 0 0 0 0 0
0 1 1 1 0 1 1 1 1 0 0 1 0 1 0 1 0 1 0 0 1 0 0 1 0 1 0 1 0 0 0 1 0 0 0 1 0 0 0 0 0 1 0 0 0
0 1 0 1 0 1 0 1 0 1 1 1 0 0 0 1 0 0 0 1 0 1 0 1 0 0 0 0 0 1 0 0 0 0 0 0 0 0 0 1 0 0 0 1 0
0 1 0 1 0 1 0 1 0 0 0 1 1 1 0 1 0 1 0 0 0 0 0 1 0 1 0 1 0 0 0 1 0 0 0 1 0 0 1 0 0 1 0 0 0
0 1 0 1 0 1 0 1 0 0 0 1 0 0 0 1 1 1 0 0 0 0 0 1 0 0 0 0 0 0 0 1 0 1 0 1 0 0 0 0 0 0 0 0 0
0 1 0 1 0 1 0 1 0 1 0 1 0 0 0 1 0 1 1 1 0 0 0 1 0 0 0 0 0 1 0 0 0 0 0 1 0 1 0 1 0 0 0 0 0
0 1 0 1 0 1 0 1 0 0 0 1 0 0 0 1 0 0 0 0 0 1 0 1 0 0 0 0 0 0 0 1 0 0 0 0 0 0 0 0 0 0 0 1 0
0 1 0 1 0 1 0 1 0 1 0 1 0 1 0 1 0 0 0 1 0 0 0 1 0 0 0 1 0 1 0 1 0 0 0 0 0 0 0 1 0 1 0 0 0
0 1 0 1 0 1 0 1 0 1 0 1 0 0 0 1 0 1 0 1 0 0 0 1 0 0 0 0 0 1 0 1 0 0 0 1 0 0 0 1 0 0 0 0 0
0 1 0 1 0 1 0 1 0 0 0 1 0 0 0 1 0 1 0 0 0 0 0 1 0 0 0 0 0 0 0 1 0 0 0 1 1 1 0 0 0 0 0 0 0
0 1 0 1 0 1 0 1 0 1 0 1 0 1 0 1 0 0 0 1 0 0 0 1 0 0 0 1 0 1 0 1 0 0 0 0 0 0 0 1 0 1 0 0 0
0 1 0 1 0 1 0 1 0 0 0 1 0 1 0 1 0 1 0 0 0 1 0 1 0 0 0 1 0 0 0 0 0 0 0 1 0 0 0 0 0 1 1 1 0
0 1 0 1 0 1 0 1 0 0 0 1 0 0 0 1 0 0 0 0 0 0 0 1 0 0 0 0 0 0 0 1 0 0 0 0 0 0 0 0 0 0 0 0 0
0 1 0 1 0 1 0 1 0 0 0 1 0 0 0 1 0 1 0 0 0 0 0 1 0 1 0 0 0 0 0 1 0 0 0 1 0 0 0 0 0 0 0 0 0
0 1 0 1 0 1 0 1 0 1 0 1 0 0 0 1 0 0 0 1 0 0 0 1 0 0 0 0 0 1 0 0 0 0 0 0 0 0 0 1 0 0 0 0 0
0 1 0 1 0 1 0 1 0 1 0 1 0 0 0 1 0 1 0 1 0 0 0 1 0 0 0 0 0 1 0 1 0 0 0 1 0 0 0 1 0 0 0 0 0
0 1 0 1 0 1 0 1 0 0 0 1 0 0 0 1 0 1 0 0 0 1 0 1 0 0 0 0 0 0 0 0 0 1 0 1 0 0 0 0 0 0 0 1 0
0 1 0 1 0 1 0 1 0 1 0 1 0 1 0 1 0 1 0 1 0 0 0 1 0 0 0 1 0 1 0 1 0 0 0 1 0 0 0 1 0 1 0 0 0
0 1 0 1 0 1 0 1 0 0 0 1 0 0 0 1 0 1 0 0 0 0 0 1 0 0 0 0 0 0 0 1 0 0 0 1 0 0 0 0 0 0 0 0 0
0 1 0 1 0 1 0 1 0 1 0 1 0 0 0 1 0 1 0 1 0 0 0 1 0 1 0 0 0 1 0 1 0 0 0 1 0 0 0 1 0 0 0 0 0
0 1 0 1 0 1 0 1 0 0 0 1 0 1 0 1 0 0 0 0 0 0 0 1 0 0 0 1 0 0 0 0 0 0 0 0 0 0 0 0 0 1 0 0 0
0 1 0 1 0 1 0 1 0 1 0 1 0 0 0 1 0 1 0 1 0 1 0 1 0 0 0 0 0 1 0 1 0 0 0 1 0 0 0 1 0 0 0 1 0
0 1 0 1 0 1 0 1 0 0 0 1 0 1 0 1 0 1 0 0 0 0 0 1 0 0 0 1 0 0 0 1 0 0 0 1 0 0 0 0 0 1 0 0 0
\end{filecontents}

\pgfplotstableread[col sep=space,row sep=newline]{bindat}{\bindat}

\pgfplotstableforeachcolumn\bindat\as\col{
  \ifnum\col<45
  \pgfplotstableforeachcolumnelement{\col}\of\bindat\as\colcnt{
    \ifnum\colcnt=0
      \filldraw [draw=black,fill=BrickRed] (0.5+\col,0.5+\pgfplotstablerow) rectangle (1.5+\col,1.5+\pgfplotstablerow);
    \fi
    \ifnum\colcnt=1
      \filldraw [draw=black,fill=lightpink] (0.5+\col,0.5+\pgfplotstablerow) rectangle (1.5+\col,1.5+\pgfplotstablerow);
    \fi
  }
  \fi
}
\end{tikzpicture}
\caption{Light squares identify pairs $(\ell,d)$ for which there exists a degree-$d$ number field $K$ contained in the $\ell$-torsion field of an elliptic curve $E/\Q$ (or $E/K$) with $j(E)\in\Q\setminus\{0,1728\}$ such that $\Q(E[\ell])/K$ is abelian (see Theorem~\ref{thm:2}).}
\label{tab:ldabext}
\end{table}

Finally, we have the following result, which is a refinement of \cite[Thm.~1.7]{lozanorobledo2013} on degrees of number fields over which elliptic curves with rational $j$-invariant contain torsion points of order $\ell$:
\begin{theorem}
\label{thm:3}
Assume Conjecture~\ref{conj:sutherland}. Given an odd prime $\ell$, there exists a set $\mathcal{T}(\ell)$, identified explicitly in Table~\ref{tab:tjl}, such that the following holds: there exists a degree-$d$ number field $K$ and an elliptic curve $E/K$ with $j(E)\in\Q\setminus\{0,1728\}$ for which $E(K)_\tor$ contains a point of order $\ell$ if and only if $t\mid d$ for some $t\in\mathcal{T}(\ell)$.
\end{theorem}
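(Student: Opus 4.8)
The plan is to convert the torsion hypothesis into a statement about the mod-$\ell$ Galois image and then feed it into the same framework used for Theorem~\ref{thm:1}. First I would record the elementary equivalence: since $\ell$ is odd, $E(K)$ contains a point of order $\ell$ exactly when $\rho_{E,\ell}(\Gal(\Kbar/K))$ is conjugate in $\GL_2(\Z/\ell\Z)$ to a subgroup of the stabilizer $V:=\{\smallmat{1}{*}{0}{*}\}$ of the vector $\smallvect{1}{0}$. Because we require $j(E)\in\Q\setminus\{0,1728\}$, the curve $E$ is a quadratic twist of the base change of a fixed $E_0/\Q$ with $j(E_0)=j(E)$, and twisting by a quadratic character multiplies $\rho_{E_0,\ell}$ by $\pm I$. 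Since $V^\pm:=\langle V,-I\rangle=\{\smallmat{\pm 1}{*}{0}{*}\}$ contains $-I$, is stable under multiplication by $-I$, and carries the homomorphism $\smallmat{\pm 1}{*}{0}{*}\mapsto\pm 1$ onto $\{\pm 1\}$, one checks that some quadratic twist of $(E_0)_K$ has a $K$-rational point of order $\ell$ if and only if $\rho_{E_0,\ell}(\Gal(\Kbar/K))$ is conjugate to a subgroup of $V^\pm$; equivalently, $j(E_0)$ is the image of a $K$-point of the modular curve $X_{V^\pm}=X_1(\ell)/\langle\pm 1\rangle$, which is defined over $\Q$ because $\det V^\pm=(\Z/\ell\Z)^\times$.

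Next I would reduce to a finite group-theoretic problem, as in the proof of Theorem~\ref{thm:1}. Writing $G_0:=\rho_{E_0,\ell}(\Gal(\Qbar/\Q))=\Gal(\Q(E_0[\ell])/\Q)$, for $K$ of degree $d$ the restriction $\rho_{E_0,\ell}(\Gal(\Kbar/K))$ equals $\Gal(\Q(E_0[\ell])/(K\cap\Q(E_0[\ell])))$, a subgroup of $G_0$ of index dividing $d$; conversely every subgroup $H\le G_0$ is realized, with index exactly $[G_0:H]$, by $K=\Q(E_0[\ell])^H$, and the conclusion is preserved under enlarging $K$. Combining this with the previous paragraph, a value of $d$ occurs in the theorem if and only if there is an \emph{achievable} image $G_0$---one realized by some $E_0/\Q$ with $j(E_0)\in\Q\setminus\{0,1728\}$---and a subgroup $H\le G_0$ that is $\GL_2(\Z/\ell\Z)$-conjugate to a subgroup of $V^\pm$ with $[G_0:H]\mid d$. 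Thus $\mathcal{T}(\ell)$ may be taken to be the set of minimal elements, under divisibility, of $\{[G_0:H]:G_0\text{ achievable},\ H\le G_0,\ H\subseteq_{\mathrm{conj}}V^\pm\}$, and everything reduces to computing, for each achievable $G_0$, the index of the largest subgroup of $G_0$ that fixes a line and acts on it through $\{\pm 1\}$.

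The achievable $G_0$ are those classified for Theorem~\ref{thm:1}: under Conjecture~\ref{conj:sutherland} the non-CM ones are $\GL_2(\Z/\ell\Z)$ and the groups of Table~\ref{tab:exim}, while for CM curves with rational $j\ne 0,1728$ one uses the images of \cite[Thm.~1.14--1.16]{zywina2015}, which lie in $\Ns$ or $\Nns$ according as $\ell$ is split or inert in the relevant imaginary quadratic field---the source of the $\mathcal{P}$/$\mathcal{Q}$ dichotomy and of the special primes $\ell\in\{3,5,7,11,13,17,19,37,43,67,163\}$ in the table. The group theory then splits into cases: for $G_0=\GL_2(\Z/\ell\Z)$ the largest subgroup conjugate into $V^\pm$ is $V^\pm$ itself, of index $(\ell^2-1)/2$; for $G_0$ inside a split Cartan normalizer only the semisimple part can fix a line, and a short Weyl-group argument identifies the largest such subgroup as essentially $\{\smallmat{\pm 1}{0}{0}{*}\}\cap G_0$, of index $\ell-1$ when $G_0=\Ns$; for $G_0$ inside a nonsplit Cartan normalizer (which governs the primes in $\mathcal{Q}$) such a subgroup meets the Cartan only in scalars, so it has order at most $4$ and the index works out to $(\ell^2-1)/2$ as in the $\GL_2(\Z/\ell\Z)$ case; and for the special $\ell$ one computes directly with the generators of Table~\ref{tab:exim}, the decisive contributions for the primes $11,19,43,67,163$ (and part of $13,17,37$) coming from curves with a rational $\ell$-isogeny, where $G_0$ lies in a Borel subgroup and the index is controlled by which subgroup of $\F_\ell^\times$ arises as the image of the isogeny character. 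Finally I would assemble $\mathcal{T}(\ell)$ and check it against unconditional benchmarks: Mazur's theorem (which forces $\ell\in\{3,5,7\}$ when $d=1$), the quadratic and cubic torsion classifications, and the coarser \cite[Thm.~1.7]{lozanorobledo2013}---noting that the CM rows hold unconditionally.

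I expect the main obstacle to be the group-theoretic bookkeeping for the special primes: for each $\ell\in\{3,5,7,13,17,37\}$ and each isogeny prime there are several candidate images $G_0$, and one must pin down, in each, the largest subgroup $\GL_2(\Z/\ell\Z)$-conjugate into $V^\pm$---which in the Borel cases requires knowing exactly which subgroups of $\F_\ell^\times$ occur as images of isogeny characters, equivalently, a fine-enough understanding of the non-cuspidal rational points of $X_0(\ell)$---before taking minimal elements over all $G_0$ and all CM contributions together. A secondary point needing care is that the $\{\pm 1\}$-twisting reduction of the first paragraph is airtight (this is exactly why the quartic and sextic twists at $j=0,1728$ must be excluded), and that each exceptional group in Table~\ref{tab:exim} is actually realized by a curve over $\Q$ with $j\ne 0,1728$.
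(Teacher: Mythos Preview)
Your proposal is correct and follows essentially the same route as the paper. The paper phrases the reduction in terms of \emph{twists} of subgroups $H\le G_0$ that fix a nonzero vector (its sets $\I(G_E(\ell))$ in (\ref{eqn:iij})), whereas you package the same thing as ``$H$ is conjugate into $V^\pm=\langle V,-I\rangle$''; these are equivalent because $-I\in V^\pm$ and $V^\pm\to\{\pm1\}$ is a homomorphism with kernel $V$, so a twist of $H$ lies in $V$ iff $H\subseteq V^\pm$. After that, both proceed identically: enumerate the achievable images $G_0$ (GL$_2$, the exceptional groups of Table~\ref{tab:exim}, and the CM images $\Ns,\Nns,G,H_1,H_2$ from \cite{zywina2015}), compute the minimal index of a subgroup conjugate into $V^\pm$ in each (the paper's Lemmas~\ref{lem:fj}--\ref{lem:bj} for the non-exceptional cases, Magma for Table~\ref{tab:exim}), and take div-minima. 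Your anticipated obstacle---pinning down the isogeny characters for the small exceptional primes---is exactly what the paper offloads to the explicit list in Table~\ref{tab:exim} and a machine computation. One small correction: the CM-Borel images $G,H_1,H_2$ occur only for $\ell\in\{3,7,11,19,43,67,163\}$, not for $5,13,17,37$; the latter are special only via non-CM exceptional images.
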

The analogous result for elliptic curves defined over $\Q$ is proven unconditionally in \cite[Cor.~6.1]{gonzalezjimenez2016c}; our methods can be easily made to give a proof of this case, which we omit, instead reproducing the resultant sets $\mathcal{T}_\Q(\ell)$ in Table~\ref{tab:tjl}. We again identify analogous sets $\mathcal{T}^\CM(\ell)$, $\mathcal{T}_\Q^\CM(\ell)$, $\mathcal{T}^\nonCM(\ell)$, $\mathcal{T}_\Q^\nonCM(\ell)$ for elliptic curves with and without CM, listed with $\mathcal{T}(\ell)$ and $\mathcal{T}_\Q(\ell)$. These results agree with, and in some cases sharpen, recent work on CM elliptic curves \cite{bourdonclarkstankewicz,bourdonpollack,bourdonclark}. For instance, \cite[Thm.~1.5]{bourdonclarkstankewicz} and \cite[Thm.~1.2]{bourdonpollack} imply that there exists a number field of odd degree $d$ over which a CM elliptic curve attains an $\ell$-torsion point if and only if $\ell\equiv 3\bmod 4$ and $d$ is divisible by $h_\ell\cdot\frac{\ell-1}{2}$, where $h_\ell$ denotes the class number of $\Q(\sqrt{-\ell})$ (see also \cite[Thm.~6.2]{bourdonclark}). But by Theorem~\ref{thm:3}, for certain primes $\ell\equiv 3\bmod 4$, all odd degrees $d$ divisible by $h_\ell\cdot\frac{\ell-1}{2}$ have this property even if we restrict to CM elliptic curves $E$ with $j(E)\in\Q\setminus\{0,1728\}$; for other primes $\ell\equiv 3\bmod 4$, no such degrees do. Table~\ref{tab:cm} illustrates this phenomenon for all such $\ell\le 83$. The entries in its final column can also be derived from \cite{bourdonclarkstankewicz,bourdonpollack}: they are ``all'' when $h_\ell=1$ and ``none'' when $h_\ell>1$.

\begin{table}
$$
\begin{tabu}{rlll}
\toprule
\ell&\mathcal{T}(\ell)&\mathcal{T}^\CM(\ell)&\mathcal{T}^\nonCM(\ell)\\
\midrule
3&1&1&1\\
5&1&4&1\\
7&1&3&1\\
11&5&5&5\\
13&2,3&12&2,3\\
17&4&16&4\\
37&6&36&6\\
19,43,67,163&(\ell-1)/2&(\ell-1)/2&(\ell^2-1)/2\\
\in\mathcal{Q}&(\ell^2-1)/2&(\ell^2-1)/2&(\ell^2-1)/2\\
\text{other}&\ell-1&\ell-1&(\ell^2-1)/2\\
\bottomrule
\end{tabu}
$$
\caption{Values of $\mathcal{T}(\ell)$, $\mathcal{T}^\CM(\ell)$, and $\mathcal{T}^\nonCM(\ell)$ (see Theorem~\ref{thm:3}).}
\label{tab:tjl}
\end{table}

\begin{table}
$$
\begin{tabu}{rlll}
\toprule
\ell&\mathcal{T}_\Q(\ell)&\mathcal{T}_\Q^\CM(\ell)&\mathcal{T}_\Q^\nonCM(\ell)\\
\midrule
3&1&1&1\\
5&1&8&1\\
7&1&3&1\\
11&5&5&5\\
13&3,4&24&3,4\\
17&8&32&8\\
37&12&72&12\\
19,43,67,163&(\ell-1)/2&(\ell-1)/2&\ell^2-1\\
\in\mathcal{Q}&\ell^2-1&\ell^2-1&\ell^2-1\\
\text{other}&2(\ell-1)&2(\ell-1)&\ell^2-1\\
\bottomrule
\end{tabu}
$$
\caption{Values of $\mathcal{T}_\Q(\ell)$, $\mathcal{T}_\Q^\CM(\ell)$, and $\mathcal{T}_\Q^\nonCM(\ell)$ (see Theorem~\ref{thm:3}).}
\label{tab:tl}
\end{table}

\begin{table}
$$
\begin{tabu}{rlll}
\toprule
\ell&h_\ell\cdot\frac{\ell-1}{2}&\mathcal{T}^\CM(\ell)&\text{Degrees}\\\midrule
3&1&1&\text{all}\\
7&3&3&\text{all}\\
11&5&5&\text{all}\\
19&9&9&\text{all}\\
23&33&22&\text{none}\\
31&45&30&\text{none}\\
43&21&21&\text{all}\\
47&115&46&\text{none}\\
59&87&58&\text{none}\\
67&33&33&\text{all}\\
71&245&70&\text{none}\\
79&195&78&\text{none}\\
83&123&82&\text{none}\\\bottomrule
\end{tabu}
$$
\caption{Primes $\ell\equiv 3\bmod 4$, along with whether ``all'' or ``none'' of the odd numbers $d$ divisible by $h_\ell\cdot\frac{\ell-1}{2}$ have the property that there exists a degree-$d$ number field over which a CM elliptic curve $E$ with $j(E)\in\Q\setminus\{0,1728\}$ attains an $\ell$-torsion point.}
\label{tab:cm}
\end{table}

\subsection*{Acknowledgements}

The author wishes to thank Andrew Sutherland for supervising this research and providing many helpful suggestions and insights; Filip Najman and \'Alvaro Lozano-Robledo for their comments on an early draft of this paper; and the referees for their comments and careful review. This research was generously supported by \textsc{mit}'s \textsc{urop} program and the Paul E. Gray (1954) Endowed Fund for \textsc{urop}.

\section{Background}
\label{sec:background}

Let $\ell$ be an odd prime, and let $\GL_2(\ell):=\GL_2(\Z/\ell\Z)$ and $\SL_2(\ell):=\SL_2(\Z/\ell\Z)$. The center of $\GL_2(\ell)$ is the subgroup
\begin{equation*}
Z(\ell):=\{\smallmat{x}{0}{0}{x}:x\in\F_\ell^\times\}\simeq\F_\ell^\times,
\end{equation*}
and we let $\PGL_2(\ell):=\GL_2(\ell)/Z(\ell)$. Let
\begin{align*}
\Cs(\ell)&:=\{\smallmat{x}{0}{0}{y}:xy\ne 0\}\le\GL_2(\ell),\\
\Cns(\ell)&:=\{\smallmat{x}{\epsilon y}{y}{x}:(x,y)\ne(0,0)\}\le\GL_2(\ell),
\end{align*}
be the \emph{split Cartan group} and \emph{nonsplit Cartan group}, respectively, for some fixed non-quadratic residue $\epsilon\in\F_\ell^\times$. We refer to their conjugates in $\GL_2(\ell)$ as \emph{split} and \emph{nonsplit Cartan groups}, respectively. Note that we have $\Cs(\ell)\simeq\F_\ell^\times\times\F_\ell^\times$ and $\Cns(\ell)\simeq\F_{\ell^2}^\times$. Both $\Cs(\ell)$ and $\Cns(\ell)$ are index-$2$ subgroups of their normalizers
\begin{equation*}
\Ns(\ell):=\Cs\cup\smallmat{0}{1}{1}{0}\Cs,\qquad\Nns(\ell):=\Cns\cup\smallmat{1}{0}{0}{-1}\Cns.
\end{equation*}
We denote the matrices in $\Ns(\ell)$ and $\Nns(\ell)$ by
\begin{equation*}
\begin{aligned}
\mathcal{D}_\s(x,y)&:=\smallmat{x}{0}{0}{y},\\
\mathcal{D}_\ns(x,y)&:=\smallmat{x}{\epsilon y}{y}{x},
\end{aligned}
\qquad
\begin{aligned}
\mathcal{A}_\s(x,y)&:=\smallmat{0}{x}{y}{0},\\
\mathcal{A}_\ns(x,y)&:=\smallmat{x}{-\epsilon y}{y}{-x}.
\end{aligned}
\end{equation*}
The \emph{Borel group} $B(\ell)\le\GL_2(\ell)$ is the subgroup of upper triangular matrices; we refer to its conjugates in $\GL_2(\ell)$ as \emph{Borel groups}. We refer to $\GL_2(\ell)$, the Borel groups, the Cartan subgroups, and their normalizers collectively as \emph{standard subgroups} of $\GL_2(\ell)$.

The following proposition, originally due to Dickson \cite{dickson1901}, classifies subgroups of $\GL_2(\ell)$ in terms of their images in $\PGL_2(\ell)$:

\begin{proposition}
\label{prop:subs}
Let $\ell$ be an odd prime and $G$ be a subgroup of $\GL_2(\ell)$ with image $H$ in $\PGL_2(\ell)$. If $G$ contains an element of order $\ell$, then either $G$ is conjugate to a subgroup of the Borel group $B(\ell)$, or $\SL_2(\ell)$ is contained in $G$. Otherwise, one of the following holds:
\begin{enumerate}
\item $H$ is cyclic and a conjugate of $G$ lies in $\Cs(\ell)$ or $\Cns(\ell)$;
\item $H$ is dihedral and a conjugate of $G$ lies in $\Ns(\ell)$ or $\Nns(\ell)$, but not in $\Cs(\ell)$ or $\Cns(\ell)$;
\item $H$ is isomorphic to $A_4$, $S_4$, or $A_5$, and no conjugate of $G$ is contained in the normalizer of any Cartan subgroup.
\end{enumerate}
\end{proposition}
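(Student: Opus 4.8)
The plan is to reduce the statement to Dickson's classification of the subgroups of $\PGL_2(\ell)=\GL_2(\ell)/Z(\ell)$ and then lift it back to $\GL_2(\ell)$. Write $H$ for the image of $G$ in $\PGL_2(\ell)$, and note the basic dichotomy: a matrix in $\GL_2(\ell)$ has order $\ell$ exactly when it is a nontrivial unipotent (a transvection), so $G$ contains such an element if and only if $\ell\mid|G|$, and otherwise every element of $G$ is semisimple. The input I would use is Dickson's theorem \cite{dickson1901} (also available from modern references, or provable by the argument sketched below): a subgroup of $\PGL_2(\ell)$ of order divisible by $\ell$ is either conjugate into the image of the Borel group $B(\ell)$---equivalently, normalizes its Sylow $\ell$-subgroup---or else contains $\operatorname{PSL}_2(\ell):=\SL_2(\ell)/\{\pm I\}$; and a subgroup of $\PGL_2(\ell)$ of order prime to $\ell$ is cyclic, dihedral, or isomorphic to $A_4$, $S_4$, or $A_5$.

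Suppose first that $G$ contains an element $u$ of order $\ell$. If $H$ is conjugate into the image of $B(\ell)$, then, as $Z(\ell)\subseteq B(\ell)$, a conjugate of $G$ lies in $B(\ell)$, which is the first alternative. Otherwise $\operatorname{PSL}_2(\ell)\subseteq H$, and I would argue that $\SL_2(\ell)\subseteq G$. Let $N\trianglelefteq G$ be the normal closure of $u$; every $G$-conjugate of $u$ is a transvection, so $N\subseteq\SL_2(\ell)$ and its image $\overline N\trianglelefteq H$ lies in $\operatorname{PSL}_2(\ell)$. But $\overline N$ is nontrivial of order divisible by $\ell$ (it contains the order-$\ell$ image of $u$), so $\overline N\cap\operatorname{PSL}_2(\ell)$ is a normal subgroup of $\operatorname{PSL}_2(\ell)$ of order divisible by $\ell$; the only such subgroup is $\operatorname{PSL}_2(\ell)$ itself (by simplicity for $\ell\ge5$, and because the unique proper nontrivial normal subgroup of $\operatorname{PSL}_2(3)\cong A_4$ has order $4$). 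Hence the composite $N\hookrightarrow\SL_2(\ell)\twoheadrightarrow\operatorname{PSL}_2(\ell)$ is surjective, so $N\cdot\{\pm I\}=\SL_2(\ell)$ and $[\SL_2(\ell):N]\le2$; since $\SL_2(\ell)$ has no subgroup of index $2$ (being perfect for $\ell\ge5$ and having no normal subgroup of order $12$ for $\ell=3$), we get $\SL_2(\ell)=N\subseteq G$.

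Now suppose $\ell\nmid|G|$, so every element of $G$ is semisimple. If $H$ is cyclic, generated by the image of some $g\in G$, then $G=\langle g\rangle\cdot(G\cap Z(\ell))$ is abelian; if $G$ consists only of scalars then $G\subseteq Z(\ell)\subseteq\Cs(\ell)$, and otherwise a non-scalar $g_0\in G$ has minimal polynomial a squarefree quadratic over $\F_\ell$, whose centralizer $C_{\GL_2(\ell)}(g_0)$ is a split Cartan group if that polynomial splits and a nonsplit one ($\cong\F_{\ell^2}^\times$) if it is irreducible, and $G$, being abelian, lies in that Cartan group. If $H$ is dihedral but not cyclic, choose a cyclic index-$2$ subgroup $H_0\le H$ and let $G_0\trianglelefteq G$ be its preimage; by the cyclic case we may assume, after conjugating, that $G_0$ lies in a Cartan group $C$ ($=\Cs(\ell)$ or $\Cns(\ell)$), and since $H_0\ne1$ we have $G_0\not\subseteq Z(\ell)$, so $G_0$ contains a non-scalar $h$ with $C_{\GL_2(\ell)}(h)=C$. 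For any $g\in G\setminus G_0$ the element $ghg^{-1}$ lies in $G_0\subseteq C$ and is again non-scalar with centralizer $C$, so $gCg^{-1}=C_{\GL_2(\ell)}(ghg^{-1})=C$, i.e.\ $g\in N_{\GL_2(\ell)}(C)$; hence $G\subseteq N_{\GL_2(\ell)}(C)$, which is $\Ns(\ell)$ or $\Nns(\ell)$, and $G$ is not conjugate into $\Cs(\ell)$ or $\Cns(\ell)$ because those have cyclic image in $\PGL_2(\ell)$ while $H$ is not cyclic. Finally, if $H\cong A_4$, $S_4$, or $A_5$, then $G$ is not conjugate into the normalizer of any Cartan subgroup, since the image of any such normalizer in $\PGL_2(\ell)$ is dihedral, every subgroup of a dihedral group is cyclic or dihedral, and none of $A_4$, $S_4$, $A_5$ is either (e.g.\ none has a cyclic subgroup of index at most $2$).

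The main obstacle is Dickson's classification of subgroups of $\PGL_2(\ell)$ itself, especially the claim that a subgroup of order divisible by $\ell$ with more than one Sylow $\ell$-subgroup contains $\operatorname{PSL}_2(\ell)$. One proves this by analyzing the action of $H$ on $\mathbb{P}^1(\F_\ell)$: a Sylow $\ell$-subgroup fixes exactly one point, so having more than one Sylow $\ell$-subgroup forces $H$ to act transitively on $\mathbb{P}^1(\F_\ell)$, and inspecting the point stabilizers (which lie in Borel subgroups) together with the usual orbit count---formally the same as in the classification of finite subgroups of $\mathrm{SO}(3)$---then yields $\operatorname{PSL}_2(\ell)\subseteq H$. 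A minor further subtlety is the prime $\ell=3$, where $\operatorname{PSL}_2(3)$ is not simple; but its only proper nontrivial normal subgroup is a Klein four-group, of order prime to $3$, so the normal-closure argument goes through unchanged, and the remaining small-group facts follow at once from the subgroup structure of $\SL_2(3)$.
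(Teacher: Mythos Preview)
Your argument is correct. The paper does not actually prove this proposition; it simply cites Serre \cite{serre1972}, \S2, so your write-up is a fleshed-out version of the same classical route (reduce to Dickson's classification in $\PGL_2(\ell)$, then lift). The lifting steps you supply---showing $\SL_2(\ell)\subseteq G$ via the normal closure of a transvection, identifying the Cartan as the centralizer of a non-scalar semisimple element, and passing to the normalizer in the dihedral case---are all standard and sound, and your handling of $\ell=3$ is fine since $\SL_2(3)$ has abelianization $\Z/3\Z$ and hence no index-$2$ subgroup.
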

\begin{proof}
See \cite[\S2]{serre1972}.
\end{proof}

An integer $n\in S\subseteq\Z_{>0}$ is said to be \emph{div-minimal in $S$} if it is a minimal element of $S$ in the divisibility lattice of integers. A subgroup $H\in S\subseteq\{G:G\le\GL_2(\ell)\}$ is said to be \emph{div-minimal in $S$} if $\#H$ is div-minimal in $\{\#G:G\in S\}$. The terms ``div-maximal,'' ``div-minima,'' and so forth, are defined as expected. The set $S$ is often omitted if it is clear from context. Note that an element of a set $S$ can be both div-minimal and div-maximal.

\section{Proof of Theorems~\ref{thm:1} and \ref{thm:2}}

Let $E/\Q$ be an elliptic curve with $j(E)\ne 0,1728$, and let $K$ be a degree-$d$ number field. The intersection $K\cap\Q(E[\ell])$ with the $\ell$-torsion field of $E$ is a number field of degree $r$ dividing $d$. Conversely, for any degree-$r$ number field $L$ that is a subfield of $\Q(E[\ell])$, and $d$ divisible by $r$, there exists a degree-$d$ number field $L'$ such that $L'\cap\Q(E[\ell])=L$ (e.g., adjoin to $L$ a root of $x^{d/r}-p$ for some prime $p$ unramified in both $L$ and $\Q(E[\ell])$). Since
\begin{equation}
\label{eqn:galisomsub}
\rho_{E,\ell}(\Gal(\Kbar/K))\simeq\Gal(\Q(E[\ell])/(K\cap\Q(E[\ell]))\le\Gal(\Q(E[\ell])/\Q)\simeq\rho_{E,\ell}(\Gal(\Qbar/\Q))
\end{equation}
is an index-$r$ subgroup, it suffices to determine which index-$r$ subgroups of each possible image $G_E(\ell):=\rho_{E,\ell}(\Gal(\Qbar/\Q))$ belong to each standard subgroup in question. That is, we would like to identify the sets $\I(G_E(\ell);M(\ell))$ for each $M\in\{Z,\Cs,\Cns,\Ns,\Nns\}$, where we have let
\begin{equation}
\label{eqn:IGM}
\I(G_1,\ldots,G_n;M):=\{[G_i:H]:H\le G_i\text{ such that }H\text{ belongs to }M\text{, for some }1\le i\le n\}
\end{equation}
for any $G_1,\ldots,G_n\le\GL_2(\ell)$ and standard subgroup $M$.

If $E$ does not have CM, then $G_E(\ell)$ is either $\GL_2(\ell)$ or one of the $63$ exceptional groups listed in Table~\ref{tab:exim} by our assumption of Conjecture~\ref{conj:sutherland}. Let \(\Excep(\ell)\) denote the set of exceptional subgroups of \(\GL_2(\ell)\). The results for each subgroup in Table~\ref{tab:exim} were computed \cite{code,sutherland2016} using Magma \cite{magma}, and the unions of these sets of indices over each \(\Excep(\ell)\) are the sets $\I(\Excep(\ell);M(\ell))$ appearing in Table~\ref{tab:eml}. The analogous sets $\I(\GL_2(\ell);M(\ell))$ are determined in \S\ref{subsec:f}, and appear in Table~\ref{tab:setml}. There, and henceforth, we define
\begin{equation}
\label{eqn:delta}
\Delta(r\in L\mid s\in U):=\Delta(L,U):=\{n\in\Z_{>0}:r\mid n\mid s\text{ for some }r\in L,s\in U\}
\end{equation}
for any nonempty $L,U\subseteq\Z_{>0}$; we also let
\begin{equation}
\label{eqn:piodd}
\Pi(n):=\{p\mid n\},\qquad\Pi_\odd(n):=\{p\mid n:p>2\},
\end{equation}
and $\nu_2$ denote the usual $2$-adic valuation.

Now suppose $E$ has CM. Following \cite{zywina2015}, define subgroups
\begin{align*}
G(\ell)&:=\{\smallmat{a}{b}{0}{\pm a}:a\in\F_\ell^\times,b\in\F_\ell\},\\
H_1(\ell)&:=\{\smallmat{a}{b}{0}{\pm a}:a\in(\F_\ell^\times)^2,b\in\F_\ell\},\\
H_2(\ell)&:=\{\smallmat{\pm a}{b}{0}{a}:a\in(\F_\ell^\times)^2,b\in\F_\ell\},
\end{align*}
and recall the definitions of (\ref{eqn:pq}). By \cite[Prop.~1.14]{zywina2015}, if $\ell\in\mathcal{P}$ (resp.\ $\ell\in\mathcal{Q}$) then $G_E(\ell)$ is conjugate to $\Ns(\ell)$ (resp.\ $\Nns(\ell)$); if $\ell\in\{3,7,11,19,43,67,163\}$ (i.e., the set of odd primes $\ell$ such that the imaginary quadratic field $\Q(\sqrt{-\ell})$ has class number $1$) then $G_E(\ell)$ is conjugate to either $\Ns(\ell)$, $\Nns(\ell)$, $G(\ell)$, $H_1(\ell)$, or $H_2(\ell)$, and moreover each of these cases occurs for some such $E$ (as a simple Magma computation \cite{code} shows that $\big\{\tLegendre{-D}{\ell}:D\in\{3,4,7,8,11,19,43,67,163\}\big\}=\{-1,0,1\}$ for all $\ell\in\{3,7,11,19,43,67,163\}$); and otherwise $G_E(\ell)$ is conjugate to either $\Ns(\ell)$ or $\Nns(\ell)$, with each case occurring for some such $E$. The sets $\I(\Ns(\ell);M(\ell))$, $\I(\Nns(\ell);M(\ell))$, and $\I(G(\ell),H_1(\ell),H_2(\ell);M(\ell))$ are determined in \S\ref{subsec:s}, \S\ref{subsec:n}, and \S\ref{subsec:b}, respectively, and appear in Table~\ref{tab:setml}.

Now let $K$ be a degree-$d$ number field, and $E/K$ be an elliptic curve with $j(E)\in\Q\setminus\{0,1728\}$. Then $E$ is either a base change of an elliptic curve over $\Q$, or a quadratic twist of such a curve. By \cite[Cor.~5.25]{sutherland2016}, the images $\rho_{E,\ell}(\Gal(\Kbar/K))$ which may arise in this case are precisely the groups $G$ arising for base changes of elliptic curves $E/\Q$, along with their \emph{twists}: these are the group $\angles{G,-1}$ and its index-$2$ subgroups that do not contain $-1$. The following lemma shows that a subgroup of $\GL_2(\ell)$ and its twists always belong to the same subgroup, and therefore all results above hold when $E/\Q$ is replaced by an elliptic curve $E/K$ with $j(E)\in\Q\setminus\{0,1728\}$.

\begin{lemma}
\label{lem:twistsamemaxsubgp}
Let $G\le\GL_2(\ell)$, let $H$ be a twist of $G$, and let $M$ be $Z$, $\Cs$, $\Cns$, $\Ns$, or $\Nns$. Then $G$ belongs to $M$ if and only if $H$ belongs to $M$.
\end{lemma}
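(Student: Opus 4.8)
\textbf{Proof strategy for Lemma~\ref{lem:twistsamemaxsubgp}.}
The plan is to reduce the statement to a computation on the images of $G$ and $H$ in $\PGL_2(\ell)$, using the fact that ``belonging to $M$'' is really a condition visible at the level of $\PGL_2(\ell)$ together with the containment/non-containment of $-I$. First I would recall from the definition of a twist that $H\le\angles{G,-I}$ and $\angles{G,-I}=\angles{H,-I}$, so $G$ and $H$ generate the same subgroup of $\GL_2(\ell)$ after adjoining $-I$; in particular $G$ and $H$ have the same image in $\PGL_2(\ell)$ up to the ambiguity caused by whether $-I\in G$ or $-I\in H$. Since each of $Z(\ell)$, $\Cs(\ell)$, $\Cns(\ell)$, $\Ns(\ell)$, $\Nns(\ell)$ contains $-I$, conjugacy of $G$ into one of them is equivalent to conjugacy of $\angles{G,-I}$ into it, which is equivalent to conjugacy of the common group $\angles{G,-I}=\angles{H,-I}$ into it, hence equivalent to the same statement for $H$. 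This already shows that $G$ is conjugate to a subgroup of $M$ if and only if $H$ is; the content of the lemma is then to check that the additional ``but not conjugate to a subgroup of any smaller standard subgroup in $M$'' clause is likewise insensitive to passing from $G$ to $H$.

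To handle the ``belongs'' refinement, I would observe that for $G_1:=\angles{G,-I}$, the property ``$G$ is conjugate to a subgroup of $N$'' coincides with ``$G_1$ is conjugate to a subgroup of $N$'' for every $N\in\{Z,\Cs,\Cns,\Ns,\Nns\}$, because all five contain $-I$ and $[G_1:G]\le 2$ forces $G_1\subseteq N$ whenever $G\subseteq N^g$ and $-I\in N$ (replacing $N$ by its conjugate). Consequently, ``$G$ belongs to $M$'' depends only on the set of $N$'s into which $G_1$ embeds up to conjugacy, and this set is manifestly the same for $G$ and $H$ since $\angles{G,-I}=\angles{H,-I}$. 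The one subtlety to spell out is the case $M=Z$: here ``$G$ belongs to $Z$'' means $G$ consists only of scalars, which need not be preserved by twisting if $-I\notin G$ — but a twist of a group of scalars is again contained in $\angles{\text{scalars},-I}$, still a group of scalars, so the conclusion holds; and conversely if $H$ is all scalars then so is $\angles{H,-I}=\angles{G,-I}\supseteq G$. I would also dispatch the two ``incomparable'' cases $\Cs$ vs.\ $\Cns$ and $\Ns$ vs.\ $\Nns$ explicitly, noting that the diagram \eqref{eqn:subincl} has no edges between them, so nothing extra needs checking there.

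The main obstacle I anticipate is bookkeeping rather than mathematics: one must be careful that the equivalences ``$G\subseteq N^g \iff \angles{G,-I}\subseteq N^{g'}$'' are genuinely two-sided, since in the backward direction one uses that $N$ contains $-I$ to absorb $-I$ into $N$ after conjugating $G$ inside, and this step quietly requires $N$ to be one of the five distinguished subgroups (it fails, e.g., for a Borel not containing $-I$, but no such group appears among our $M$'s). Once that is pinned down, the proof is a short diagram chase through \eqref{eqn:subincl}: ``belonging to $M$'' is the conjunction of ``$\angles{G,-I}$ conjugate into $M$'' and, for each standard subgroup $M'\subsetneq M$ in the lattice, ``$\angles{G,-I}$ not conjugate into $M'$,'' and every clause is phrased purely in terms of $\angles{G,-I}=\angles{H,-I}$, so it transfers verbatim to $H$.
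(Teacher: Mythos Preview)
Your proposal is correct and follows essentially the same route as the paper: both arguments reduce to showing that, for each of the five standard subgroups $M$, the condition ``$G$ is conjugate to a subgroup of $M$'' depends only on $\langle G,-I\rangle$ (since $-I\in M$ in every case), and then observe that ``belongs to $M$'' is a Boolean combination of such conditions along the lattice \eqref{eqn:subincl}. The paper's version is terser---it omits the $\PGL_2(\ell)$ framing and the separate discussion of $M=Z$ and of the incomparable pairs, none of which are needed once the key observation $\langle G,-I\rangle=\langle H,-I\rangle$ is in hand.
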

\begin{proof}
Let $H$ be an index-$2$ subgroup of $\angles{G,-1}$ not containing $-1$, and suppose $H\le M$. Then $H$ is an index-$2$ subgroup of $\angles{H,-1}\simeq H\times\{\pm 1\}$, so since $\angles{H,-1}\subseteq\angles{G,-1}$, it follows that $\angles{H,-1}=\angles{G,-1}$. But $-1\in M$ (as is true of all standard subgroups of $\GL_2(\ell)$), hence $G\le\angles{H,-1}\le M$. Conversely, if $G\le M$, then $H\le\angles{G,-1}\le M$ as $-1\in M$, so altogether $H\le M$ if and only if $G\le M$. Since the above argument respects conjugacy, the conclusion follows by definition. The same proof applies if $H=\angles{G,-1}$.
\end{proof}

Finally, let
\begin{gather*}
\mathcal{E}_M^\CM(\ell):=\begin{lrdcases}\I(\Ns(\ell);M(\ell))&\text{if }\ell\in\mathcal{P},\\
\I(\Nns(\ell);M(\ell))&\text{if }\ell\in\mathcal{Q},\\
\I(\Ns(\ell),\Nns(\ell),G(\ell),H_1(\ell),H_2(\ell);M(\ell))&\text{if }\ell\in\{3,7,11,19,43,67,163\},\\
\I(\Ns(\ell),\Nns(\ell);M(\ell))&\text{otherwise}.
\end{lrdcases},\\
\mathcal{E}_M^\nonCM(\ell):=\I(\Excep(\ell);M(\ell))\cup\I(\GL_2(\ell);M(\ell)),\qquad\mathcal{E}_M(\ell):=\mathcal{E}_M^\CM\cup\mathcal{E}_M^\nonCM.
\end{gather*}
Altogether, we have proven the following result:
\begin{theorem}
\label{thm:main}
Assume Conjecture~\ref{conj:sutherland}. Let $\ell$ be an odd prime, and let $M\in\{\Cs,\Cns,\Ns,\Nns\}$. There exists a degree-$d$ number field $K$ and an elliptic curve $E/\Q$ with $K$ a subfield of $\Q(E[\ell])$ such that $\rho_{E,\ell}(\Gal(\Kbar/K)))$ belongs to $M(\ell)$ if and only if $d\in\mathcal{E}_M(\ell)$. The same is true if $E$ is replaced by an elliptic curve $E/K$ with $j(E)\in\Q\setminus\{0,1728\}$.
\end{theorem}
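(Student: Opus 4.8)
The plan is to show that Theorem~\ref{thm:main} is essentially a direct consequence of the structural analysis carried out earlier in this section, with Lemma~\ref{lem:twistsamemaxsubgp} handling the passage from $E/\Q$ to $E/K$ with rational $j$-invariant. The core reduction, already established in the running text, is the isomorphism~(\ref{eqn:galisomsub}): if $E/\Q$ has $j(E)\ne 0,1728$ and $K$ is a degree-$d$ number field, then $\rho_{E,\ell}(\Gal(\Kbar/K))$ is an index-$r$ subgroup of $G_E(\ell):=\rho_{E,\ell}(\Gal(\Qbar/\Q))$, where $r=[K\cap\Q(E[\ell]):\Q]$ divides $d$. Conversely, given a subfield $L\subseteq\Q(E[\ell])$ of degree $r$ and any multiple $d$ of $r$, one can find a degree-$d$ field $K$ with $K\cap\Q(E[\ell])=L$ (adjoin a root of $x^{d/r}-p$ for a suitably unramified prime $p$). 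Hence ``there exists a degree-$d$ number field $K$ and an elliptic curve $E/\Q$ with $K$ a subfield of $\Q(E[\ell])$ such that $\rho_{E,\ell}(\Gal(\Kbar/K))$ belongs to $M$'' holds if and only if $r\mid d$ for some $r$ that arises as the index $[G_E(\ell):H]$ of a subgroup $H$ belonging to $M$, as $G_E(\ell)$ ranges over the possible images and $H$ over its subgroups; equivalently, if and only if $d\in\Delta(\,\cdot\mid\,\cdot\,)$-type closure is unnecessary here since we only need divisibility by some index, i.e. $d$ is a multiple of some element of $\bigcup_{G_E}\I(G_E(\ell);M(\ell))$.

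Next I would enumerate the possible images $G_E(\ell)$. In the non-CM case, Conjecture~\ref{conj:sutherland} forces $G_E(\ell)$ to be either $\GL_2(\ell)$ or one of the $63$ exceptional groups of Table~\ref{tab:exim}, so the relevant set of indices is $\I(\Excep(\ell);M(\ell))\cup\I(\GL_2(\ell);M(\ell))=\mathcal{E}_M^\nonCM(\ell)$, the first summand computed in Magma and recorded in Table~\ref{tab:eml}, the second determined in \S\ref{subsec:f}. In the CM case, \cite[Prop.~1.14]{zywina2015} (as quoted above) gives the exhaustive list: $G_E(\ell)$ is conjugate to $\Ns(\ell)$ for $\ell\in\mathcal{P}$, to $\Nns(\ell)$ for $\ell\in\mathcal{Q}$, to one of $\Ns(\ell),\Nns(\ell),G(\ell),H_1(\ell),H_2(\ell)$ for $\ell\in\{3,7,11,19,43,67,163\}$, and to $\Ns(\ell)$ or $\Nns(\ell)$ otherwise, with every listed case realized. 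Feeding these into~(\ref{eqn:IGM}) gives precisely $\mathcal{E}_M^\CM(\ell)$, with the constituent sets $\I(\Ns(\ell);M(\ell))$, $\I(\Nns(\ell);M(\ell))$, and $\I(G(\ell),H_1(\ell),H_2(\ell);M(\ell))$ computed in \S\ref{subsec:s}, \S\ref{subsec:n}, and \S\ref{subsec:b}. Taking the union over CM and non-CM possibilities yields $\mathcal{E}_M(\ell)$, and the condition ``$d$ is a multiple of some index'' is exactly the condition $d\in\mathcal{E}_M(\ell)$ once one observes that each $\I(\cdots;M(\ell))$ in Table~\ref{tab:setml} is already recorded as (or can be taken to be) closed upward under divisibility, since if index $r$ is realizable then so is any multiple, by the field-adjunction argument above.

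Finally, for the passage to $E/K$ with $j(E)\in\Q\setminus\{0,1728\}$, I would invoke \cite[Cor.~5.25]{sutherland2016}: such an $E$ is either a base change of an elliptic curve over $\Q$ or a quadratic twist of one, and the images $\rho_{E,\ell}(\Gal(\Kbar/K))$ that occur are exactly the groups $G$ arising for base changes of $E/\Q$ together with their twists in the sense defined above (the group $\angles{G,-1}$ and its index-$2$ subgroups not containing $-I$). By Lemma~\ref{lem:twistsamemaxsubgp}, a group belongs to $M$ if and only if each of its twists does, so the set of degrees $d$ for which some twist belonging to $M$ arises coincides with the set for which $G$ itself (belonging to $M$) arises — which is $\mathcal{E}_M(\ell)$. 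This completes the argument. The only genuinely substantive work lies outside this synthesis: computing the index sets $\I(\GL_2(\ell);M(\ell))$, $\I(\Ns(\ell);M(\ell))$, $\I(\Nns(\ell);M(\ell))$, and $\I(G(\ell),H_1(\ell),H_2(\ell);M(\ell))$ in the subsections \S\ref{subsec:f}, \S\ref{subsec:s}, \S\ref{subsec:n}, \S\ref{subsec:b}, which requires a careful subgroup-lattice analysis of the Cartan normalizers and Borel-type groups and is where the main difficulty resides; the present theorem is the bookkeeping that assembles those computations into the statement.
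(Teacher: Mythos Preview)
Your overall architecture matches the paper's: reduce via~(\ref{eqn:galisomsub}) to studying subgroups of the possible images $G_E(\ell)$, enumerate those images using Conjecture~\ref{conj:sutherland} and \cite[Prop.~1.14]{zywina2015}, and invoke Lemma~\ref{lem:twistsamemaxsubgp} for the $E/K$ case. However, you have misread the hypothesis of the theorem and this leads to a false claim.

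The theorem requires $K$ to be a \emph{subfield of} $\Q(E[\ell])$. Under that hypothesis $K\cap\Q(E[\ell])=K$, so in your notation $r=d$ exactly, not merely $r\mid d$. The field-adjunction argument (adjoin a root of $x^{d/r}-p$) produces fields \emph{not} contained in $\Q(E[\ell])$ and is irrelevant here; it is used in the paper only to pass from Theorem~\ref{thm:main} to Theorem~\ref{thm:1}, where the containment hypothesis is dropped and the answer is phrased via the div-minima $\mathcal{S}_M(\ell)$. Consequently your assertion that each $\I(\cdots;M(\ell))$ ``is already recorded as (or can be taken to be) closed upward under divisibility'' is false: these are finite sets of the form $\Delta(L\mid U)$ with genuine upper bounds (e.g.\ $\I(\GL_2(\ell);Z(\ell))=\Delta(\ell(\ell+1)(\ell-1)\mid\ell(\ell+1)(\ell-1)^2)$), and $d\in\mathcal{E}_M(\ell)$ is strictly stronger than ``$d$ is a multiple of some element of $\mathcal{E}_M(\ell)$''. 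The correct reduction is simply that, by Galois theory, degree-$d$ subfields $K\subseteq\Q(E[\ell])$ correspond bijectively to index-$d$ subgroups $H\le G_E(\ell)$, with $\rho_{E,\ell}(\Gal(\Kbar/K))\simeq H$; hence the condition is that $d$ \emph{equals} $[G_E(\ell):H]$ for some $H$ belonging to $M$ and some realizable $G_E(\ell)$, i.e.\ $d\in\mathcal{E}_M(\ell)$. Once you make that correction, the remainder of your argument (enumeration of images, Magma for the exceptional cases, the subsection computations, and Lemma~\ref{lem:twistsamemaxsubgp} for twists) is exactly the paper's.
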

Naturally, to restrict to an elliptic curve with or without CM, we may simply replace $\mathcal{E}_M(\ell)$ by $\mathcal{E}_M^\CM(\ell)$ or $\mathcal{E}_M^\nonCM(\ell)$, respectively. To obtain Theorem~\ref{thm:1}, it remains to choose the div-minimal elements of $\mathcal{E}_M(\ell)$, by (\ref{eqn:galisomsub}). Doing so yields the sets $\mathcal{S}_M(\ell)$ in Table~\ref{tab:rml}; the sets $\mathcal{S}_M^\CM(\ell)$ and $\mathcal{S}_M^\nonCM(\ell)$ are obtained similarly.

For Theorem~\ref{thm:2}, note that the Cartan subgroups are the maximal abelian subgroups of $\GL_2(\ell)$ not containing an element of order $\ell$; by \cite[Lem.~3.3]{sutherland2016}, the maximal abelian subgroup of $\GL_2(\ell)$ containing an element of order $\ell$ is given up to conjugacy by the \emph{ramified Cartan group}
\begin{equation*}
\Cr(\ell):=\{\smallmat{a}{b}{0}{a}:a\in\F_\ell^\times,b\in\F_\ell\}.
\end{equation*}
Thus, it remains to determine the sets $\I(G_E(\ell);\Cr(\ell))$ for each $G_E$ appearing in Table~\ref{tab:setml}; note that since $Z(\ell)\subseteq \Cr(\ell)$, we say that a subgroup belongs to $\Cr(\ell)$ if it is conjugate to a subgroup of $\Cr(\ell)$, but not to a subgroup of $Z(\ell)$. These sets appear in Tables~\ref{tab:eml} and \ref{tab:setml} and are determined in Lemma~\ref{lem:A} below. It then follows from (\ref{eqn:galisomsub}) that
\begin{equation}
\label{eqn:vwl}
\begin{split}
\mathcal{K}(\ell)&=\mathcal{E}_Z(\ell)\cup\mathcal{E}_{\Cs}(\ell)\cup\mathcal{E}_{\Cns}(\ell)\cup\mathcal{E}_{\Cr}(\ell),\\
\mathcal{K}^\CM(\ell)&=\mathcal{E}_Z^\CM(\ell)\cup\mathcal{E}_{\Cs}^\CM(\ell)\cup\mathcal{E}_{\Cns}^\CM(\ell)\cup\mathcal{E}_{\Cr}^\CM(\ell),\\
\mathcal{K}^\nonCM(\ell)&=\mathcal{E}_Z^\nonCM(\ell)\cup\mathcal{E}_{\Cs}^\nonCM(\ell)\cup\mathcal{E}_{\Cns}^\nonCM(\ell)\cup\mathcal{E}_{\Cr}^\nonCM(\ell),
\end{split}
\end{equation}
which is the desired explicit identification.

\begin{lemma}
\label{lem:A}
The set of indices of subgroups $G$ of $G_E(\ell)$ such that $G$ belongs to $\Cr(\ell)$ is the set of positive integers divisible by $\ell^2-1$ (resp. $2$) and dividing $(\ell+1)(\ell-1)^2$ (resp. $2(\ell-1)$) if $G_E$ is $\GL_2$ (resp. $G$, $H_1$, or $H_2$), and the empty set if $G_E$ is $\Ns$ or $\Nns$. 
\end{lemma}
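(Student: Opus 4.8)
The plan is to analyze, for each possible group $G_E(\ell)$ appearing in Table~\ref{tab:setml}, which subgroups $G \le G_E(\ell)$ are conjugate into $\Cr(\ell)$ but not into $Z(\ell)$, and then compute the set of indices $[G_E(\ell):G]$ that arise. Recall that $\Cr(\ell)$ has order $\ell(\ell-1)$, with a normal Sylow-$\ell$ subgroup of order $\ell$ (the unipotent upper-triangular matrices) and quotient $\F_\ell^\times$; a subgroup $G \le \GL_2(\ell)$ belongs to $\Cr(\ell)$ exactly when $G$ is conjugate into $\Cr(\ell)$ and is non-scalar. First I would dispose of the CM normalizer cases: if $G_E$ is $\Ns(\ell)$ or $\Nns(\ell)$, then $-I \in G$ forces any abelian subgroup to be small, but more to the point, a subgroup of $\Cr(\ell)$ containing an element of order $\ell$ cannot lie in a Cartan normalizer (whose order is $2(\ell-1)^2$ or $2(\ell^2-1)$, coprime to $\ell$), so the only abelian subgroups of $\Ns,\Nns$ belong to $Z$, $\Cs$, or $\Cns$ — hence the set of indices for $\Cr$ is empty, as claimed.

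Next I would handle $G_E = \GL_2(\ell)$. Every subgroup of $\GL_2(\ell)$ conjugate into $\Cr(\ell)$ either does or does not contain the order-$\ell$ unipotent element. If it belongs to $\Cr(\ell)$ (so it is non-scalar), write $G = \langle u^a \rangle \rtimes H$ inside $\Cr(\ell) = U \rtimes \F_\ell^\times$ where $U \cong \Z/\ell$; up to conjugacy in $\Cr(\ell)$ the interesting case is $G$ containing all of $U$, giving $G = U \rtimes H$ for a subgroup $H \le \F_\ell^\times$, of order $\ell \cdot \#H$ and index $(\ell-1)/\#H \cdot (\ell+1)(\ell-1) = (\ell+1)(\ell-1)^2/\#H$; the other case is $G \le \F_\ell^\times$ diagonalizable, which is abelian non-scalar of order $\#H$ for $H \le \F_\ell^\times$ — but this is conjugate into $\Cs(\ell)$, hence belongs to $\Cs$, not $\Cr$, unless... actually any such $G$ is conjugate to a subgroup of the diagonal torus $\Cs(\ell)$, so by the ``belongs to'' convention it does not belong to $\Cr(\ell)$. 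Therefore the subgroups belonging to $\Cr(\ell)$ are exactly those containing an element of order $\ell$ and conjugate into $\Cr(\ell)$; one must check these are precisely the groups $U \rtimes H$ above (a subgroup of $\Cr(\ell)$ with a nontrivial unipotent part either contains $U$ or is generated by a single non-semisimple element, and in the latter case is still of the form considered after adjusting). Their orders are $\ell \cdot m$ for $m \mid \ell-1$, so the indices $[\GL_2(\ell):G] = \#\GL_2(\ell)/(\ell m) = (\ell^2-1)(\ell-1)/m \cdot \frac{\ell-1}{\ell-1}$ — carefully, $\#\GL_2(\ell) = \ell(\ell-1)(\ell^2-1)$, so the index is $(\ell-1)(\ell^2-1)/m$, which ranges over the positive multiples of $\ell^2-1$ dividing $(\ell-1)(\ell^2-1) = (\ell+1)(\ell-1)^2$ as $m$ ranges over divisors of $\ell-1$. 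That matches the claim.

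Finally I would treat $G_E \in \{G(\ell), H_1(\ell), H_2(\ell)\}$. Each of these is contained in the Borel group, has order $2\ell(\ell-1)$ (for $G(\ell)$) or $\ell(\ell-1)$ (for $H_1,H_2$), and contains the full unipotent subgroup $U$. A subgroup belonging to $\Cr(\ell)$ must be conjugate into $\Cr(\ell)$ and non-scalar. Arguing as before, the relevant subgroups are those of the form $U \rtimes H$ with $H \le \F_\ell^\times$ (after conjugation); inside $G(\ell)$, $H_1(\ell)$, $H_2(\ell)$ such subgroups have order $\ell m$ for appropriate $m \mid \ell-1$ (one must check which values of $m$ actually occur, using that the scalar part of each group restricts to squares together with $\pm 1$). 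A subgroup not containing an element of order $\ell$ would be conjugate into the diagonal torus hence belong to $\Cs$ (or $Z$), not $\Cr$. The index $[G_E(\ell):U\rtimes H]$ then equals $2(\ell-1)/m$ or $(\ell-1)/m$, and I would verify that as $H$ ranges over the allowed subgroups the smallest case gives index $2$ (take $H$ of index $1$ in the relevant cyclic group, noting $-I$ always lies in these groups so the torus part has even order, making the largest non-scalar unipotent-containing subgroup have index $2$) and the largest gives index $2(\ell-1)$ (take $H$ trivial, i.e. just $U \rtimes \{\pm I\}$ or $U$ itself), with all intermediate multiples of $2$ dividing $2(\ell-1)$ occurring. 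The main obstacle I anticipate is the bookkeeping in this last case: pinning down exactly which index-$2$-type subgroups of $G(\ell)$, $H_1(\ell)$, $H_2(\ell)$ are conjugate into $\Cr(\ell)$ versus merely into $B(\ell)$, and confirming that the conjugation can always be taken to land in $\Cr(\ell)$ rather than only in a larger Borel — this requires carefully tracking the semisimple parts and using that a non-semisimple element of $\GL_2(\ell)$ together with the unipotent it normalizes generates a group conjugate into $\Cr(\ell)$. The normalizer cases and the $\GL_2$ case are comparatively routine once the structure of $\Cr(\ell) = U \rtimes \F_\ell^\times$ is in hand.
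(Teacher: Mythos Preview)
Your approach is essentially the same as the paper's --- identify the subgroups of $\Cr(\ell)\cong U\times Z(\ell)$ that are non-scalar (namely those containing $U=\langle\smallmat{1}{1}{0}{1}\rangle$) and read off their indices --- and the normalizer cases are dispatched the same way (no element of order $\ell$). Two points are worth noting.

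First, you misread the ``belongs to $\Cr(\ell)$'' convention. The paper defines it as: conjugate into $\Cr(\ell)$ but \emph{not} into $Z(\ell)$; it does not exclude $\Cs(\ell)$. So the sentence ``conjugate into $\Cs(\ell)$, hence belongs to $\Cs$, not $\Cr$'' is not a valid deduction. The correct reason those subgroups are excluded is that any subgroup of $\Cr(\ell)$ with no element of order $\ell$ already lies in $Z(\ell)$ (since $\Cr(\ell)\cong\Z/\ell\Z\times\Z/(\ell-1)\Z$ with coprime factors). Equivalently, a non-scalar diagonalizable subgroup contains an element with two distinct eigenvalues and so cannot be conjugate into $\Cr(\ell)$, whose elements all have a repeated eigenvalue. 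Your conclusion survives, but the justification should be fixed; the same remark applies to the analogous step in the $G,H_1,H_2$ paragraph.

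Second, the ``main obstacle'' you anticipate in the $G,H_1,H_2$ case evaporates in the paper's treatment. Since $\Cr(\ell)\subseteq G(\ell)$, every subgroup of $\GL_2(\ell)$ belonging to $\Cr(\ell)$ is already a subgroup of $G(\ell)$, so
\[
\I(G(\ell);\Cr(\ell))=\Delta\!\bigl((\ell^2-1)/[\GL_2(\ell):G(\ell)]\ \big|\ (\ell+1)(\ell-1)^2/[\GL_2(\ell):G(\ell)]\bigr)=\Delta(2\mid 2(\ell-1)),
\]
with no need to track which semisimple parts occur or to worry about conjugating into $\Cr(\ell)$ versus $B(\ell)$. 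The computations for $H_1(\ell)$ and $H_2(\ell)$ are analogous (their intersection with $\Cr(\ell)$ has index $2$), and the union is already $\Delta(2\mid 2(\ell-1))$.
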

\begin{proof}
A subgroup of $\GL_2(\ell)$ belongs to $\Cr(\ell)$ if and only if it is contained in $\Cr(\ell)$ and contains $\angles{\smallmat{1}{1}{0}{1}}$; the set of indices of such subgroups is evidently
\begin{equation*}
\Delta([\GL_2(\ell):\Cr(\ell)]\mid[\GL_2(\ell):\angles{\smallmat{1}{1}{0}{1}}])=\Delta(\#\GL_2(\ell)/\ell(\ell-1)\mid \#\GL_2(\ell)/\ell),
\end{equation*}
as desired.

Since $\Cr(\ell)\subseteq G(\ell)$, we have
\begin{equation*}
\I(G(\ell);\Cr(\ell))=\Delta((\ell^2-1)/[\GL_2(\ell):G(\ell)]\mid(\ell+1)(\ell-1)^2/[\GL_2(\ell):G(\ell)])=\Delta(2\mid 2(\ell-1)).
\end{equation*}
The computations for $H_1(\ell)$ and $H_2(\ell)$ are analogous, and taking unions gives the result when $G_E$ is allowed to be any of these three subgroups.

For the cases where $G_E(\ell)$ is $\Ns(\ell)$ or $\Nns(\ell)$, it suffices to note that neither of these subgroups contains an element of order $\ell$.
\end{proof}

\begin{table}
\begin{gather*}
\begin{tabu}{rll}
\toprule
\ell&Z(\ell)&\Cs(\ell)\\
\midrule
3&\Delta(2\mid 12,16)&\Delta(1\mid 6,8)\\
5&\Delta(4\mid 80,96)&\Delta(1\mid 40,48)\setminus\{3\}\\
7&\Delta(6,14,16\mid 72,96,252)&\Delta(2,3,7\mid 36,48,84,126)\\
11&\Delta(24,110\mid 220,240)&\Delta(11,12\mid 44,110,120)\\
13&\Delta(24,52\mid 288,1872)&\Delta(6,8,13\mid 96,144,624,936)\\
17&\Delta(272\mid 1088)&\Delta(17\mid 544)\\
37&\Delta(1332\mid 15984)&\Delta(37\mid 5328,7992)\\
\text{other}&\varnothing&\varnothing
\end{tabu}\\
\begin{tabu}{rllll}
\toprule
\ell&\Cns(\ell)&\Ns(\ell)&\Nns(\ell)&\Cr(\ell)\\
\midrule
3&\Delta(2\mid 4)&\Delta(1\mid 4)&\Delta(1\mid 6,8)&\Delta(2\mid 4)\\
5&\Delta(2\mid 12,32)&\Delta(1\mid 12)&\Delta(1\mid 40,48)\setminus\{5\}&\Delta(4\mid 16)\\
7&\Delta(2\mid 18,24)&\Delta(1\mid 18,24)&\Delta(1\mid 36,48,126)&\Delta(2\mid 26)\\
11&\Delta(2\mid 60,80)&\Delta(6\mid 60)&\Delta(1\mid 110,120)\setminus\{11,22\}&\Delta(10\mid 20)\\
13&\Delta(12\mid 36)&\Delta(3,4\mid 36)&\Delta(6,26\mid 144,936)&\Delta(4\mid 144)\\
17&\varnothing&\varnothing&\Delta(136\mid 544)&\Delta(16\mid 64)\\
37&\varnothing&\varnothing&\Delta(666\mid 7992)&\Delta(36\mid 432)\\
\text{other}&\varnothing&\varnothing&\varnothing&\varnothing\\
\bottomrule
\end{tabu}
\end{gather*}
\caption{The set $\I(\Excep(\ell);M(\ell))$ of indices of subgroups $G$ of some exceptional image $G_E(\ell)=\rho_{E,\ell}(\Gal(\Qbar/\Q))$ listed in Table~\ref{tab:exim} such that $G$ belongs to $M(\ell)$, for the standard subgroups $M=Z,\Cs,\Cns,\Ns,\Nns,\Cr$. Regarding notation, see (\ref{eqn:delta}).}
\label{tab:eml}
\end{table}

\begin{table}
\begin{tabular}{rrl}
\toprule
$G_E$&$M$&$\I(G_E(\ell);M(\ell))$\\
\midrule
\multirow{6}{*}{$\GL_2$}&$Z$&$\Delta(\ell(\ell+1)(\ell-1)\mid\ell(\ell+1)(\ell-1)^2)$\\
&$\Cs$&$\Delta(\{\ell(\ell+1)\},\{\ell(\ell+1)(\ell-1)^2/q:q\in\Pi(\ell-1)\})$\\
&$\Cns$&$\Delta(\{\ell(\ell-1)\},\{\ell(\ell+1)(\ell-1)^2/q:q\in\Pi_\odd(\ell+1)\cup\{2^{\nu_2(\ell-1)+1}\}\})$\\
&$\Ns$&$\Delta(\{\ell(\ell+1)/2\},\{\ell(\ell+1)(\ell-1)^2/2q:q\in\Pi_\odd(\ell-1)\cup\{4\text{ if }4\mid(\ell-1),\text{ else }2\}\})$\\
&$\Nns$&$\Delta(\ell(\ell-1)/2\mid\ell(\ell+1)(\ell-1)^2/2)$\\
&$\Cr$&$\Delta(\ell^2-1\mid(\ell+1)(\ell-1)^2)$\\
\midrule
\multirow{6}{*}{$\Ns$}&$Z$&$\Delta(2(\ell-1)\mid 2(\ell-1)^2)$\\
&$\Cs$&$\Delta(\{2\},\{2(\ell-1)^2/q:q\in\Pi(\ell-1)\})$\\
&$\Cns$&$\Delta(\ell-1\mid (\ell-1)^2/2^{\nu_2(\ell-1)})$\\
&$\Ns$&$\Delta(\{1\},\{(\ell-1)^2/q:q\in\Pi_\odd(\ell-1)\cup\{4\text{ if }4\mid(\ell-1),\text{ else }2\}\})$\\
&$\Nns$&$\Delta((\ell-1)/2\mid(\ell-1)^2)$\\
&$\Cr$&$\varnothing$\\
\midrule
\multirow{6}{*}{$\Nns$}&$Z$&$\Delta(2(\ell+1)\mid 2(\ell^2-1))$\\
&$\Cs$&$\Delta(\ell+1\mid\ell^2-1)$\\
&$\Cns$&$\Delta(\{2\},\{2(\ell^2-1)/q:q\in\Pi_\odd(\ell+1)\cup\{2^{\nu_2(\ell-1)+1}\}\})$\\
&$\Ns$&$\Delta((\ell+1)/2\mid(\ell^2-1)/2^{\nu_2(\ell-1)})$\\
&$\Nns$&$\Delta(1\mid(\ell^2-1))$\\
&$\Cr$&$\varnothing$\\
\midrule
\multirow{6}{*}{$G,H_1,H_2$}&$Z$&$\Delta(2\ell\mid 2\ell(\ell-1))$\\
&$\Cs$&$\Delta(\ell\mid\ell(\ell-1))$\\
&$\Cns$&$\varnothing$\\
&$\Ns$&$\varnothing$\\
&$\Nns$&$\Delta(\ell\mid\ell(\ell-1))$\\
&$\Cr$&$\Delta(2\mid 2(\ell-1))$\\
\bottomrule
\end{tabular}
\caption{The set of indices of subgroups $G$ of $G_E(\ell)$ such that $G$ belongs to $M(\ell)$, for each possible non-exceptional image $G_E(\ell)=\rho_{E,\ell}(\Gal(\Qbar/\Q))$ and for the standard subgroups $M=Z,\Cs,\Cns,\Ns,\Nns,\Cr$  (the final row should be interpreted in the manner of (\ref{eqn:IGM})). Regarding notation, see (\ref{eqn:delta}, \ref{eqn:piodd}).}
\label{tab:setml}
\end{table}

\subsection{Full image}
\label{subsec:f}

\begin{lemma}
\label{lem:fz}
The set of indices of subgroups $G$ of $\GL_2(\ell)$ such that $G$ belongs to $Z(\ell)$ is the set of positive integers divisible by $\ell(\ell+1)(\ell-1)$ and dividing $\ell(\ell+1)(\ell-1)^2$.
\end{lemma}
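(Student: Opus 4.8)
The plan is to unwind the definition of ``belongs to'' in this minimal case and then perform an elementary divisor count. First I would observe that since $Z(\ell)$ lies at the bottom of the subgroup lattice (\ref{eqn:subincl}), the phrase ``$G$ belongs to $Z(\ell)$'' carries no exclusion condition: it simply means that $G$ is conjugate to a subgroup of $Z(\ell)$. But $Z(\ell)$ is the center of $\GL_2(\ell)$, so conjugation fixes it pointwise; hence $G$ is conjugate into $Z(\ell)$ if and only if $G\subseteq Z(\ell)$, i.e., $G$ consists entirely of scalar matrices.

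Next I would use the structure of $Z(\ell)$. Since $Z(\ell)\simeq\F_\ell^\times$ is cyclic of order $\ell-1$, for each divisor $m$ of $\ell-1$ there is exactly one subgroup of $Z(\ell)$ of order $m$, and these exhaust the subgroups of $Z(\ell)$. Combining this with $\#\GL_2(\ell)=(\ell^2-1)(\ell^2-\ell)=\ell(\ell-1)^2(\ell+1)$, the set of indices $[\GL_2(\ell):G]$ with $G$ scalar is exactly $\{\ell(\ell-1)^2(\ell+1)/m : m\mid\ell-1\}$.

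Finally I would identify this set with the one in the statement. Writing $k:=(\ell-1)/m$, which ranges over all divisors of $\ell-1$ as $m$ does, each such index equals $\ell(\ell+1)(\ell-1)\cdot k$ with $k\mid\ell-1$; conversely, every multiple of $\ell(\ell+1)(\ell-1)$ dividing $\ell(\ell+1)(\ell-1)^2$ is visibly of this form. Thus the set of indices is precisely $\Delta(\ell(\ell+1)(\ell-1)\mid\ell(\ell+1)(\ell-1)^2)$ in the notation of (\ref{eqn:delta}), matching the corresponding entry in Table~\ref{tab:setml}. There is no real obstacle here; the only point deserving a moment's care is the reduction in the first step — namely, that ``belongs to'' imposes nothing beyond conjugacy into $Z(\ell)$ because $Z(\ell)$ contains no proper subgroup among the five listed in (\ref{eqn:subincl}), and that $Z(\ell)$ being central makes conjugacy into it the same as containment in it.
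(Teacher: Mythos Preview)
Your proof is correct and follows essentially the same approach as the paper's: both reduce to enumerating the subgroups of the cyclic group $Z(\ell)\simeq\F_\ell^\times$ and reading off their indices in $\GL_2(\ell)$. You are simply more explicit than the paper about why ``belongs to $Z(\ell)$'' collapses to ``is a subgroup of $Z(\ell)$,'' which the paper takes for granted in writing $G\le Z(\ell)$.
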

\begin{proof}
Since finite abelian groups have subgroups of every order dividing the order of the group, and $\#Z(\ell)=\ell-1$, it follows that
\begin{equation*}
\I(\GL_2(\ell);Z(\ell))=\{[\GL_2(\ell):G]:G\le Z(\ell)\}=\Delta(\ell(\ell+1)(\ell-1)^2/\#Z(\ell)\mid\ell(\ell+1)(\ell-1)^2),
\end{equation*}
as desired.
\end{proof}

\begin{lemma}
\label{lem:fcs}
The set of indices of subgroups $G$ of $\GL_2(\ell)$ such that $G$ belongs to $\Cs(\ell)$ is the set of positive integers divisible by $\ell(\ell+1)$ and dividing an element of
\begin{equation*}
\{\ell(\ell+1)(\ell-1)^2/q:q\in\Pi(\ell-1)\}.
\end{equation*}
\end{lemma}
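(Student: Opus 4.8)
The plan is to mirror the proof of Lemma~\ref{lem:fz}, but now accounting for the fact that $\Cs(\ell)$, unlike $Z(\ell)$, is not contained in every conjugate of itself in a way that makes index-counting trivial; we must carefully identify which subgroups of $\GL_2(\ell)$ actually \emph{belong} to $\Cs(\ell)$ (in the sense defined in the introduction), i.e., are conjugate into $\Cs(\ell)$ but not into $\Cns(\ell)$, $\Ns(\ell)$, $\Nns(\ell)$, or $Z(\ell)$. Since $\Cs(\ell)\simeq\F_\ell^\times\times\F_\ell^\times$ is abelian of order $(\ell-1)^2$, any subgroup $H\le\Cs(\ell)$ is abelian, and conversely a subgroup of $\GL_2(\ell)$ conjugate into $\Cs(\ell)$ is abelian with semisimple elements of coprime-to-$\ell$ order. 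First I would determine exactly which subgroups $H\le\Cs(\ell)$ fail to belong to $\Cs(\ell)$: these are those also conjugate into $Z(\ell)$ (the scalar subgroups) — the other three standard subgroups intersect $\Cs(\ell)$ only in $\Cs(\ell)$ and $Z(\ell)$ themselves, since $\Ns(\ell)\cap\Cs(\ell)=\Cs(\ell)$ and $\Cns(\ell)\cap\Cs(\ell)$, $\Nns(\ell)\cap\Cs(\ell)$ are contained in $Z(\ell)$ (a diagonalizable matrix lying in a nonsplit Cartan is scalar). So $H\le\Cs(\ell)$ belongs to $\Cs(\ell)$ iff $H$ is not conjugate to a subgroup of scalars, and because $H$ is already diagonal, this is simply the condition that $H\not\le Z(\ell)$, i.e., $H$ contains a non-scalar diagonal matrix.

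Next I would enumerate the possible indices $[\GL_2(\ell):H]$ as $H$ ranges over non-scalar subgroups of $\Cs(\ell)$. Writing $\#\GL_2(\ell)=\ell(\ell-1)^2(\ell+1)$, the index is $\ell(\ell-1)^2(\ell+1)/\#H$, so I need the set of possible values of $\#H$. Since $\Cs(\ell)\simeq C_{\ell-1}\times C_{\ell-1}$, its subgroups can have any order $m$ that divides $(\ell-1)^2$ and is realizable as the order of a subgroup of $C_{\ell-1}\times C_{\ell-1}$ — but in fact \emph{every} divisor $m$ of $(\ell-1)^2$ is so realizable (for each prime power $q^a\mid (\ell-1)^2$, choose a subgroup of the $q$-part $C_{q^{b}}\times C_{q^{b}}$, where $q^b\|\ell-1$, of order $q^a$, which exists since $C_{q^b}\times C_{q^b}$ has subgroups of every order up to $q^{2b}$). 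The constraint $H\not\le Z(\ell)$ removes exactly those $H$ that are cyclic and embedded diagonally as scalars; since $Z(\ell)$ has order $\ell-1$, the scalar subgroups have orders dividing $\ell-1$, but for every such order there is \emph{also} a non-scalar subgroup of that order unless the only subgroup of that order is scalar. I would check that the only problematic case is when $m=(\ell-1)^2$ forces... actually no: the honest statement is that the div-maximal indices come from the div-maximal proper non-scalar subgroups, and a subgroup $H$ of maximal order that still "belongs to $\Cs$" has index $\ell(\ell+1)$ only when $\#H=(\ell-1)^2$, i.e., $H=\Cs(\ell)$ itself, which is non-scalar; the div-minimal indices come from the smallest non-scalar $H$, namely those of prime order $q\mid\ell-1$, giving index $\ell(\ell-1)^2(\ell+1)/q$.

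Putting this together: the set of indices is $\Delta(L,U)$ where $U=\{\ell(\ell-1)^2(\ell+1)/q:q\in\Pi(\ell-1)\}$ (the div-maximal achievable indices, from minimal non-scalar $H$) — wait, I have the roles reversed; let me restate. The \emph{smallest} index $\ell(\ell+1)$ occurs for $H=\Cs(\ell)$ (order $(\ell-1)^2$), so $L=\{\ell(\ell+1)\}$; the \emph{largest} indices occur for the smallest non-scalar subgroups, which have prime order $q$ for $q\in\Pi(\ell-1)$, giving index $\ell(\ell-1)^2(\ell+1)/q$, so $U=\{\ell(\ell+1)(\ell-1)^2/q:q\in\Pi(\ell-1)\}$. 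For the intermediate values, I would argue that any integer $n$ with $\ell(\ell+1)\mid n\mid \ell(\ell+1)(\ell-1)^2/q$ for some $q\in\Pi(\ell-1)$ is achieved: writing $n=\ell(\ell+1)(\ell-1)^2/d$ with $d\mid (\ell-1)^2$ and $d\geq q$ divisible by... hmm, more carefully, $n$ corresponds to $\#H=\#\GL_2(\ell)/n$, a divisor of $(\ell-1)^2$ that is a proper divisor (since $n$ is a multiple of $\ell(\ell+1)$ strictly larger than... no, $n=\ell(\ell+1)$ gives $\#H=(\ell-1)^2$) — the point is $\#H=(\ell-1)^2/(n/(\ell(\ell+1)))$ must be a nontrivial divisor, and I claim the corresponding subgroup can always be chosen non-scalar \emph{except} possibly when $\#H=\ell-1$; but even then, $C_{\ell-1}\times C_{\ell-1}$ has the anti-diagonal copy $\{(a,a^{-1})\}$ of order $\ell-1$, which is non-scalar, so every order is fine.

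The main obstacle I anticipate is the careful bookkeeping in showing that the non-scalar condition never actually restricts the set of \emph{indices} (only the set of subgroups), combined with verifying the precise shape of the div-maximal elements $U$: one must rule out larger indices, i.e., show there is no non-scalar subgroup of $\Cs(\ell)$ of order strictly between $1$ and the minimal prime divisor of $\ell-1$, which is immediate, but also confirm that for \emph{each} prime $q\mid\ell-1$ there genuinely is a non-scalar subgroup of order exactly $q$ — true because $C_q\times C_q$ (the $q$-torsion, when $q\mid \ell-1$) contains $q+1 > 1$ subgroups of order $q$, at most one of which is scalar. I would also double-check the edge interaction with $\Ns(\ell)$: a subgroup $H\le\Cs(\ell)$ is never conjugate into $\Ns(\ell)$ "more tightly" than into $\Cs(\ell)$ in the sense that matters, because belonging to $\Ns(\ell)$ explicitly excludes subgroups of $\Cs(\ell)$ — so no conflict arises. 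With these points settled, the lemma follows exactly as stated.
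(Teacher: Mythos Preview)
Your proposal is correct and follows essentially the same approach as the paper: both arguments reduce to the observation that $H\le\Cs(\ell)$ belongs to $\Cs(\ell)$ if and only if $H\not\le Z(\ell)$, and then enumerate the orders of non-scalar subgroups of $\Cs(\ell)\simeq\F_\ell^\times\times\F_\ell^\times$. The paper's proof is much terser (two sentences), simply asserting that $\Cs(\ell)$ has non-scalar subgroups of every order dividing $(\ell-1)^2$ except~$1$; your extended justification of this claim is fine, though your initial reading of ``belongs to $\Cs(\ell)$'' as also excluding conjugacy into $\Cns$, $\Ns$, $\Nns$ is a misreading of the definition (only subgroups \emph{contained in} $\Cs$ are excluded, namely $Z$), which fortunately does not affect your conclusion.
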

\begin{proof}
Since $\Cs(\ell)\simeq\F_\ell^\times\times\F_\ell^\times$, it contains non-scalar subgroups of every order dividing $\#\Cs(\ell)=(\ell-1)^2$ except for $1$, which corresponds to the trivial subgroup. The div-minimal such orders are therefore the prime divisors of $\ell-1$, and the proof concludes as in Lemma~\ref{lem:fz}.
\end{proof}

\begin{lemma}
\label{lem:fcns}
The set of indices of subgroups $G$ of $\GL_2(\ell)$ such that $G$ belongs to $\Cns(\ell)$ is the set of positive integers divisible by $\ell(\ell-1)$ and dividing an element of
\begin{equation*}
\{\ell(\ell+1)(\ell-1)^2/q:q\in\Pi_\odd(\ell+1)\cup\{2^{\nu_2(\ell-1)+1}\}\}.
\end{equation*}
\end{lemma}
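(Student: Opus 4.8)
The plan is to follow the same template used in Lemmas~\ref{lem:fz} and~\ref{lem:fcs}, which handle $Z(\ell)$ and $\Cs(\ell)$ against the full image $\GL_2(\ell)$. Recall that a subgroup $G$ of $\GL_2(\ell)$ belongs to $\Cns(\ell)$ precisely when it is conjugate to a subgroup of $\Cns(\ell)$ but not to any subgroup of $Z(\ell)$, i.e.\ it is a non-scalar subgroup of a nonsplit Cartan group. Since $\Cns(\ell)\simeq\F_{\ell^2}^\times$ is cyclic of order $\ell^2-1=(\ell+1)(\ell-1)$, for each divisor $n$ of $\ell^2-1$ there is a unique subgroup of $\Cns(\ell)$ of order $n$, and all of these (except the trivial one) are subgroups of $\GL_2(\ell)$; moreover all conjugates of $\Cns(\ell)$ are conjugate to one another, so up to conjugacy the relevant subgroups $G$ are indexed by the divisors $n>1$ of $\ell^2-1$. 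The index in $\GL_2(\ell)$ of such a $G$ is $\#\GL_2(\ell)/n=\ell(\ell+1)(\ell-1)^2/n$, so, exactly as in the earlier lemmas, the set $\I(\GL_2(\ell);\Cns(\ell))$ equals $\Delta(\{\ell(\ell+1)(\ell-1)^2/n_0:n_0\text{ div-maximal proper}\}\mid\ell(\ell+1)(\ell-1)^2/2)$ — but we must be careful to extract from ``non-scalar'' the right set of div-minimal orders, which is the real content.

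The key step is therefore to determine which divisors $n>1$ of $\ell^2-1$ correspond to subgroups of $\Cns(\ell)$ that are \emph{not} conjugate into $Z(\ell)$. The scalar subgroup $Z(\ell)$ sits inside $\Cns(\ell)$ as the unique subgroup of order $\ell-1$; a subgroup $H\le\Cns(\ell)$ of order $n$ is conjugate into $Z(\ell)$ if and only if every element of $H$ is scalar, i.e.\ $H\le Z(\ell)$, which (since $\Cns(\ell)$ is cyclic) happens exactly when $n\mid\ell-1$. So $G$ belongs to $\Cns(\ell)$ iff its order $n$ is a divisor of $\ell^2-1$ with $n\nmid\ell-1$. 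The div-minimal such $n$ are then the minimal divisors of $\ell^2-1$ not dividing $\ell-1$: these are exactly the primes $q$ dividing $\ell^2-1$ but not $\ell-1$ (namely the odd primes $q\mid\ell+1$), together with the prime power $2^{\nu_2(\ell-1)+1}$, which divides $\ell^2-1$ (as $\nu_2(\ell^2-1)=\nu_2(\ell-1)+\nu_2(\ell+1)\ge\nu_2(\ell-1)+1$ since $\ell$ is odd so $2\mid\ell+1$) but not $\ell-1$, and is minimal with that property among $2$-power divisors. This yields the set $\Pi_\odd(\ell+1)\cup\{2^{\nu_2(\ell-1)+1}\}$ of div-minimal orders. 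Dividing $\#\GL_2(\ell)=\ell(\ell+1)(\ell-1)^2$ by each gives the claimed set $\{\ell(\ell+1)(\ell-1)^2/q:q\in\Pi_\odd(\ell+1)\cup\{2^{\nu_2(\ell-1)+1}\}\}$ as the relevant div-maximal upper bounds, while the common lower divisibility constraint $\ell(\ell+1)(\ell-1)^2/(\ell^2-1)=\ell(\ell-1)$ comes from taking $G=\Cns(\ell)$ itself.

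The one point requiring genuine care — and the step I expect to be the main obstacle — is justifying that \emph{every} $n>1$ with $n\mid\ell^2-1$, $n\nmid\ell-1$ is div-above one of the listed primes or the listed $2$-power, and that no further subtlety (e.g.\ a subgroup of $\GL_2(\ell)$ abstractly isomorphic to a nonsplit-Cartan subgroup but not conjugate into $\Cns(\ell)$, or a conjugacy coincidence between subgroups of a Cartan and of $Z$) intervenes. The first is elementary number theory: if $q$ is a prime with $q\mid n$ but we want $n\nmid\ell-1$, then since $n\mid(\ell-1)(\ell+1)$, writing $n=2^a m$ with $m$ odd, failure of $n\mid\ell-1$ forces either some odd prime $q\mid m$ with $q\mid\ell+1$ (then $q$ itself is a div-minimum of our set lying below $n$), or $a>\nu_2(\ell-1)$ (then $2^{\nu_2(\ell-1)+1}\mid n$ lies below $n$); so the claimed div-minima are correct and complete. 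The conjugacy issues are handled by the uniqueness of the cyclic subgroup of each order inside the cyclic group $\Cns(\ell)$ together with the standard fact (as in Lemma~\ref{lem:fz}) that all conjugates of $\Cns(\ell)$ are $\GL_2(\ell)$-conjugate, and that a cyclic subgroup of $\GL_2(\ell)$ lying in a nonsplit Cartan is conjugate into $Z(\ell)$ iff it is scalar. With these in hand the proof closes in the same two lines as Lemmas~\ref{lem:fz} and~\ref{lem:fcs}.
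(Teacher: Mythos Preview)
Your proof is correct and follows essentially the same approach as the paper: both reduce to identifying the div-minimal non-scalar subgroups of the cyclic group $\Cns(\ell)\simeq\F_{\ell^2}^\times$, then use cyclicity to fill in all intermediate indices. Your execution is slightly more streamlined---you characterize ``non-scalar'' directly as $n\nmid\ell-1$ and argue by pure divisibility, whereas the paper splits into odd and even order and uses the $\PGL_2(\ell)$ image and an explicit matrix-squaring computation to reach the same set $\Pi_\odd(\ell+1)\cup\{2^{\nu_2(\ell-1)+1}\}$.
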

\begin{proof}
Since $\#\Cns(\ell)=\ell^2-1$, we again have
\begin{equation*}
\{[\GL_2(\ell):G]:G\le\Cns(\ell)\}=\Delta(\ell(\ell-1)\mid\ell(\ell+1)(\ell-1)^2),
\end{equation*}
however, certain subgroups of $\Cns(\ell)$ may instead lie in the standard subgroup $Z(\ell)$. Any div-minimal non-scalar subgroup $G$ of $\Cns(\ell)$ with $2\ndiv\#G$ has trivial intersection with $Z(\ell)$, hence is isomorphic to its image in $\PGL_2(\ell)$, which has order dividing $\#\Cns(\ell)/(\ell-1)=\ell+1$. Since $G$ must be non-trivial, we have $\#G\in\Pi_\odd(\ell+1)$, and all such possibilities for $\#G$ are realized as $\Cns(\ell)$ is cyclic.

Otherwise, if $\#G$ is even, then $G$ must contain an element $\gamma=\mathcal{D}_\ns(x,y)\in\Cns(\ell)\setminus Z(\ell)$ squaring to an element of $Z(\ell)$; the only such elements are those for which $x=0$, which square to an element of $\epsilon Z(\ell)^2$, i.e., the non-square elements of $Z(\ell)$. Picking a primitive root $\alpha\in\F_\ell^\times$, we see that $G$ is div-minimal when $\gamma^2=\mathcal{D}_\ns(\alpha^{(\ell-1)/2^{\nu_2(\ell-1)}},0)$ and $G=\angles{\gamma}$, that is, $\#G=2^{\nu_2(\ell-1)+1}$.

Thus, the div-maximal indices in $\GL_2(\ell)$ of a non-diagonalizable subgroup of $\Cns(\ell)$ are
\begin{equation*}
\{\ell(\ell+1)(\ell-1)^2/q:q\in\Pi_\odd(\ell+1)\cup\{2^{\nu_2(\ell-1)+1}\}\},
\end{equation*}
and $\Cns(\ell)$ contains subgroups of all intermediate indices as it is cyclic.
\end{proof}

\begin{lemma}
\label{lem:nssubgpordallmult}
Let $G:=(\Z/n\Z\times\Z/n\Z)\rtimes_\varphi\Z/2\Z$, where
\begin{equation*}
\varphi\colon\Z/2\Z\to\Aut(\Z/n\Z\times\Z/n\Z),\qquad\tau\mapsto\begin{cases}
\alpha\mapsto\beta,\\
\beta\mapsto\alpha,
\end{cases}
\end{equation*}
for generators $\tau$, $\alpha$, and $\beta$ of $\Z/2\Z$ and the first and second copies of $\Z/n\Z$, respectively. Then for any $H\le G$ containing $(1,\tau)$, we have
\begin{equation*}
\{\#K:H\le K\le G\}=\Delta(\#H\mid\#G).
\end{equation*}
\end{lemma}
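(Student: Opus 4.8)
The plan is to exploit the abelian normal subgroup $A:=\Z/n\Z\times\Z/n\Z$ of index $2$ in $G$, on which conjugation by $(1,\tau)$ restricts to the coordinate swap $\sigma:=\varphi(\tau)$. The inclusion $\{\#K:H\le K\le G\}\subseteq\Delta(\#H\mid\#G)$ is immediate from Lagrange's theorem, so the content is the reverse inclusion: every $m$ with $\#H\mid m\mid\#G$ must be realized as $\#K$ for some $K$ with $H\le K\le G$. Since $(1,\tau)\in H$ lies outside $A$, the subgroup $C:=H\cap A$ has index $2$ in $H$, and being the intersection of the two subgroups $H$ and $A$ normalized by $(1,\tau)$ it is $\sigma$-stable; in particular $\#H=2\#C$ and $\#G=2\#A$ are even. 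It therefore suffices to produce, for each $m'$ with $\#C\mid m'\mid\#A$, a $\sigma$-stable subgroup $B$ with $C\le B\le A$ and $\#B=m'$: identifying $A$ with $A\times\{1\}\le G$, the subset $K:=\langle B,(1,\tau)\rangle=B\sqcup B(1,\tau)$ is then a subgroup of $G$ of order $2m'$ containing $H$, and as $m'$ ranges over the allowed values $2m'$ ranges over all of $\Delta(\#H\mid\#G)$.

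To construct such a $B$, I first isolate a single prime step: whenever $B'$ is a $\sigma$-stable subgroup of $A$ and $p$ is a prime dividing $[A:B']$, there is a $\sigma$-stable subgroup $B''$ with $B'\le B''\le A$ and $[B'':B']=p$. Granting this, I obtain $B$ by setting $B_0:=C$ and, for any ordering $p_1,\dots,p_s$ of the prime factors (with multiplicity) of $m'/\#C$, choosing successive $\sigma$-stable subgroups $B_0\le B_1\le\cdots\le B_s=:B$ with $[B_i:B_{i-1}]=p_i$; at the $i$-th stage $p_i$ indeed divides $[A:B_{i-1}]$, since $p_i\cdots p_s$ divides $\#A/\#B_{i-1}$, and the chain ends with $\#B=m'$ as desired.

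The prime step reduces to linear algebra. Passing to $\bar A:=A/B'$, a finite abelian group with the involution induced by $\sigma$ and with $p\mid\#\bar A$, it is enough to find a $\bar\sigma$-stable subgroup of $\bar A$ of order $p$, which pulls back through the quotient map $A\to\bar A$ to the required $B''$. Replacing $\bar A$ by its $p$-Sylow subgroup and then by the subgroup of its $p$-torsion elements — both characteristic, hence $\bar\sigma$-stable — leaves a nonzero $\F_p$-vector space $V$ with an involution $\sigma\in\GL(V)$, and one needs a $\sigma$-stable line. If $p$ is odd, $x^2-1$ splits into distinct linear factors over $\F_p$, so $\sigma$ is diagonalizable and any eigenvector spans one. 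If $p=2$, then $(\sigma-\mathrm{id})^2=0$, so either $\sigma=\mathrm{id}$, in which case any line works, or $(\sigma-\mathrm{id})w\ne 0$ for some $w$ and then $v:=(\sigma-\mathrm{id})w$ is a nonzero fixed vector of $\sigma$. (Equivalently: $V$ is a nonzero module over the finite ring $\F_p[x]/(x^2-1)$, whose simple modules are all one-dimensional over $\F_p$.)

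The only point I expect to need genuine care is the bookkeeping of the iteration — keeping track that each $p_i$ still divides $[A:B_{i-1}]$ and that the chain terminates at exactly the order $m'$ — together with the small but essential observation that $C$, and hence every $B_i$, is $\sigma$-stable, so that adjoining $(1,\tau)$ really yields a subgroup. Everything else, including the linear-algebraic core, is routine once the reduction is in place.
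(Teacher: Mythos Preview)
Your proof is correct and takes a genuinely different route from the paper's. Both arguments reduce to a prime step---given a $\sigma$-stable subgroup $B'\le A$ with $p\mid[A:B']$, find a $\sigma$-stable $B''$ of index $p$ over $B'$---but they execute it very differently. The paper invokes Goursat's lemma to classify the $\sigma$-stable subgroups of $(\Z/n\Z)^2$ explicitly as fiber products $\Z/m\Z\times_{\Z/r\Z}\Z/m\Z$ (the swap forces $m_1=m_2$), computes $[G:H]=r(n/m)^2$, and then according to whether $p\mid r$ or $p\mid n/m$ writes down the enlarged subgroup by hand, adjusting $r$ or $m$. Your argument bypasses this classification entirely: passing to the quotient $A/B'$ and then to its $p$-torsion, you need only the observation that an involution on a nonzero $\F_p$-vector space always stabilizes a line, which is immediate from the factorization of $x^2-1$ over $\F_p$. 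Your approach is cleaner and strictly more general---it works verbatim for $A\rtimes_\varphi\Z/2\Z$ with $A$ any finite abelian group and $\varphi(\tau)$ any involution---whereas the paper's Goursat computation is tied to the specific shape $A=(\Z/n\Z)^2$ with the coordinate swap, though it has the minor advantage of making the intermediate subgroups completely explicit.
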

\begin{proof}
Let $H$ be a subgroup of $G$ containing $(0,\tau)$. Then $(h,\tau)\in H$ if and only if $(h,0)\in H$ as we may perform the group operation with $(0,\pm\tau)$ on the right. It follows that $H\simeq\pi_1(H)\rtimes_\varphi\angles{\tau}$, where $\pi_1$ denotes projection onto the first coordinate of the semi-direct product and $\pi_1(H)\le\Z/n\Z\times\Z/n\Z$. For any $d\mid [G:H]$, we would like to show that there exists some subgroup $H\le K\le G$ with $[H:K]=d$. Iterating, we may assume that $p=d$ is prime. By Goursat's lemma \cite[Thm.~4]{anderson}, $\pi_1(H)$ is given by the fiber product
\begin{equation*}
\Z/m_1\Z\times_{\Z/r\Z}\Z/m_2\Z=\{(x,y)\in\Z/m_1\Z\times\Z/m_2\Z:\psi_1(x)=\psi_2(y)\}
\end{equation*}
for some $r\mid m_i\mid n$, where the maps $\psi_i\colon\Z/m_i\Z\to\Z/r\Z$ are surjective for $i=1,2$. Since $\pi_1(H)$ must be $\varphi(\tau)$-invariant, it follows that $m_1=m=m_2$ as $\varphi(\tau)$ maps each cyclic factor of $\pi_1(H)$ bijectively to the other. This condition is also sufficient: we have $(j\cdot\frac{n}{m}\alpha,k\cdot\frac{n}{m}\beta)\in\pi_1(H)$ if and only if $j\equiv k\bmod{r}$, which is true if and only if $\varphi(\tau)(j\cdot\frac{n}{m}\alpha,k\cdot\frac{n}{m}\beta)=(k\cdot\frac{n}{m}\alpha,j\cdot\frac{n}{m}\beta)\in\pi_1(H)$. Thus,
\begin{equation*}
\#\pi_1(H)=\#\Z/r\Z\cdot\#\ker(\psi_1)\cdot\#\ker(\psi_2)=r\cdot\frac{m}{r}\cdot\frac{m}{r}=\frac{m^2}{r},
\end{equation*}
and so $p$ divides
\begin{equation*}
[G:H]=\frac{\#G}{\#H}=\frac{2n^2}{2m^2/r}=r\left(\frac{n}{m}\right)^2.
\end{equation*}
It follows that either $p\mid r$ or $p\mid \frac{n}{m}$.

In the former case, let $\kappa\colon\Z/r\Z\to\Z/(r/p)\Z$ be the unique quotient map, and let $K_1:=\Z/m\Z\times_{\Z/(r/p)\Z}\Z/m\Z$ with maps $\kappa\circ\psi_i\colon\Z/m\Z\to\Z/(r/p)\Z$. As before, $K_1$ is $\varphi(\tau)$-invariant, and $\#K_1=\frac{m^2}{r/p}=p\cdot\#\pi_1(H)$. Moreover, $\pi_1(H)\le K_1$, since $\psi_1(x)=\psi_2(y)$ implies $(\kappa\circ\psi_1)(x)=(\kappa\circ\psi_2)(y)$ for $(x,y)\in\pi_1(H)$. It follows that $K:=K_1\rtimes_\varphi\angles{\tau}$ is a subgroup of $G$ containing $H$ as an index-$p$ subgroup, as desired.

For the latter case, let $K_1:=\Z/pm\Z\times_{\Z/pr\Z}\Z/pm\Z$ (clearly $p\mid \frac{n}{r}$ as well) with maps $\psi'_i\colon\Z/pm\Z\to\Z/pr\Z$ for $i=1,2$. Then $K_1$ is again $\varphi(\tau)$-invariant, and $\#K_1=\frac{(pm)^2}{pr}=p\cdot\#\pi_1(H)$. Moreover, $\pi_1(H)\le K_1$, as the following diagram commutes for $i=1,2$, where we have let $\gamma:=\psi_i(\frac{n}{m}\alpha)$ and $\gamma':=\psi'_i(\frac{n}{pm}\alpha)$ be generators of $\Z/r\Z$ and $\Z/pr\Z$, respectively:
\begin{equation*}
\begin{tikzcd}
\Z/m\Z\arrow[hookrightarrow]{r}\arrow[twoheadrightarrow]{d}{\psi_i}&\Z/pm\Z\arrow[twoheadrightarrow]{d}{\psi'_i}\\
\Z/r\Z\arrow[hookrightarrow]{r}&\Z/pr\Z,
\end{tikzcd}
\qquad
\begin{tikzcd}
\frac{n}{m}\alpha\arrow[mapsto]{r}\arrow[mapsto]{d}&p\cdot\frac{n}{pm}\alpha\arrow[mapsto]{d}\\
\gamma\arrow[mapsto]{r}&p\gamma'.
\end{tikzcd}
\end{equation*}
Thus, $K:=K_1\rtimes_\varphi\angles{\tau}$ is again a subgroup of $G$ containing $H$ as an index-$p$-subgroup, and this completes the proof.
\end{proof}

\begin{lemma}
\label{lem:fns}
The set of indices of subgroups $G$ of $\GL_2(\ell)$ such that $G$ belongs to $\Ns(\ell)$ is the set of positive integers divisible by $\ell(\ell+1)/2$ and dividing an element of
\begin{equation*}
\{\ell(\ell+1)(\ell-1)^2/2q:q\in\Pi_\odd(\ell-1)\cup\begin{lrdcases}\{4\}&\text{ if }4\mid(\ell-1),\\
\{2\}&\text{ otherwise.}\end{lrdcases}\}.
\end{equation*}
\end{lemma}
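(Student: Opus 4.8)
The plan is to reduce, via Proposition~\ref{prop:subs}, to a purely group-theoretic count inside $\Ns(\ell)$, and then to interpolate using Lemma~\ref{lem:nssubgpordallmult} together with one ad hoc argument. By definition $G\le\GL_2(\ell)$ belongs to $\Ns(\ell)$ iff it is conjugate into $\Ns(\ell)$ but not into $\Cs(\ell)$; replacing $G$ by a conjugate I take $G\le\Ns(\ell)$, and then $G$ is not conjugate into $\Cs(\ell)$ precisely when $G$ is non-abelian, or $G$ is abelian but not simultaneously diagonalizable. Since $\Ns(\ell)$ contains no element of order $\ell$, Proposition~\ref{prop:subs} shows that a non-abelian such $G$ has dihedral image $D_d$ in $\Ns(\ell)/Z(\ell)\cong D_{\ell-1}$ with $2\le d\mid\ell-1$, so $\#G=\#(G\cap Z(\ell))\cdot 2d$. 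For an abelian non-diagonalizable $G$: any reflection $r\in G\setminus\Cs(\ell)$ centralizes $G\cap\Cs(\ell)$ and conjugates $\Cs(\ell)$ by the coordinate swap, so $G\cap\Cs(\ell)\le Z(\ell)$ and $G\le\angles{Z(\ell),r}$; writing $r=\smallmat{0}{b}{a}{0}$, non-diagonalizability says $x^2-ab$ is irreducible, i.e.\ $ab$ is a non-square. Since $\Ns(\ell)$ is itself non-abelian, $[\GL_2(\ell):\Ns(\ell)]=\ell(\ell+1)/2$ is the div-minimal index; it remains to find the div-minimal orders of subgroups belonging to $\Ns(\ell)$, which give the div-maximal indices.

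For the non-abelian subgroups the key is an anti-diagonal torus argument: the rotation subgroup of the image lifts to some $T\le\Cs(\ell)$ with $T/(T\cap Z(\ell))\cong C_d$, conjugation by a reflection acts on $T$ by swap while on $C_d$ by inversion, so if $T\cap Z(\ell)$ is trivial these coincide and force $T$ into the anti-diagonal torus $\{\smallmat{a}{0}{0}{a^{-1}}:a\in\F_\ell^\times\}$, whose unique order-$2$ element is $-I$. Hence $d$ even forces $-I\in T$ and $\#(G\cap Z(\ell))\ge 2$, whereas for $d$ odd (so $d\ge 3$) one may take $G\cap Z(\ell)=\{I\}$. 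This produces div-minimal non-abelian orders $2q$ for each odd prime $q\mid\ell-1$ — realized by $\angles{T_q,\smallmat{0}{1}{1}{0}}\cong D_q$ with $T_q$ the order-$q$ subgroup of the anti-diagonal torus — and $8$, realized by $\angles{\smallmat{1}{0}{0}{-1},\smallmat{0}{1}{1}{0}}\cong D_4$. For the abelian non-diagonalizable subgroups, $\#G=2\ord(u)$ for a non-square $u\in\F_\ell^\times$ (a scalar square of an element of $G$); since $\nu_2$ of the order of a non-square equals $\nu_2(\ell-1)$, the least possible value is $\ord(u)=2^{\nu_2(\ell-1)}$, giving div-minimal order $2^{\nu_2(\ell-1)+1}$, realized by $\angles{r}$ with $ab$ of order $2^{\nu_2(\ell-1)}$. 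Comparing $2^{\nu_2(\ell-1)+1}$ with $8$ under divisibility, the complete list of div-minimal orders is $\{2q:q\in\Pi_\odd(\ell-1)\}$ together with $8$ if $4\mid\ell-1$ and $4$ if $4\nmid\ell-1$.

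To recover all intermediate orders I would identify $\Ns(\ell)\cong(\Z/(\ell-1)\Z)^2\rtimes\Z/2\Z$ with the swap action as in Lemma~\ref{lem:nssubgpordallmult} (with $n=\ell-1$), under which $\smallmat{0}{1}{1}{0}$ is the pure reflection $(0,\tau)$: the div-minimal \emph{non-abelian} subgroups above all contain $\smallmat{0}{1}{1}{0}$, so that lemma gives subgroups of $\Ns(\ell)$ of every order between theirs and $\#\Ns(\ell)=2(\ell-1)^2$, all still non-abelian, hence still belonging to $\Ns(\ell)$. The remaining case is the order-$4$ subgroup $\angles{r}$ when $4\nmid\ell-1$, which is not of that form; here I argue directly that every $n$ with $4\mid n\mid 2(\ell-1)^2$ is the order of a subgroup belonging to $\Ns(\ell)$: if $8\mid n$ use the $D_4$-chain, if $n=4m$ with $m$ odd and $m>1$ then $m$ has an odd prime factor $q\mid\ell-1$ with $2q\mid n$, so use the $D_q$-chain, and $n=4$ is $\#\angles{r}$ itself. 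Conversely every subgroup belonging to $\Ns(\ell)$ has order dividing $2(\ell-1)^2$ and divisible by some div-minimal order, so altogether $\I(\GL_2(\ell);\Ns(\ell))=\Delta(\{\ell(\ell+1)/2\},\{\#\GL_2(\ell)/o:o\text{ div-minimal}\})$, which after substituting $\#\GL_2(\ell)=\ell(\ell+1)(\ell-1)^2$ and the div-minimal orders is exactly the asserted set.

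I expect the main obstacle to be the case analysis of the first two paragraphs — in particular recognizing that ``belongs to $\Ns(\ell)$'' captures not only the subgroups with dihedral image but also the abelian, non-diagonalizable subgroups (those conjugate into a nonsplit Cartan yet also into $\Ns(\ell)$), and verifying that it is precisely these that contribute the div-minimal order $4$ when $4\nmid\ell-1$.
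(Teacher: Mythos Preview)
Your proposal is correct and follows essentially the same route as the paper: determine the div-minimal orders of subgroups of $\Ns(\ell)$ belonging to $\Ns(\ell)$, then invoke Lemma~\ref{lem:nssubgpordallmult} to fill in all intermediate orders, with a separate patch for the order-$4$ cyclic subgroup when $\ell\equiv 3\bmod 4$ (which does not contain $\mathcal{A}_\s(1,1)$). Your explicit separation into the non-abelian case (dihedral image $D_d$) and the abelian non-diagonalizable case is in fact cleaner than the paper's exposition, which opens with the claim that ``any subgroup $G$ belonging to $\Ns(\ell)$ must be non-abelian'' before immediately backtracking to treat the abelian order-$4$ subgroups; your torus argument (forcing $-I\in T$ when $T\cap Z(\ell)$ is trivial and $d$ is even) is a slightly more conceptual version of the paper's direct check that order-$4$ elements of $\Ns(\ell)$ are diagonalizable when $\ell\equiv 1\bmod 4$. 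The div-minimal orders you obtain, the realizing subgroups, and the interpolation step all match the paper's.
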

\begin{proof}
Since $\#\Ns(\ell)=2(\ell-1)^2$, we have
\begin{equation*}
\{[\GL_2(\ell):G]:G\le\Ns(\ell)\}\subseteq\Delta(\ell(\ell+1)/2\mid\ell(\ell+1)(\ell-1)^2),
\end{equation*}
however, certain subgroups of $\Ns(\ell)$ may instead lie in the standard subgroup $\Cs(\ell)$. Thus, any subgroup $G$ belonging to $\Ns(\ell)$ must be non-abelian, so it contains an anti-diagonal element; since any anti-diagonal element of $\Ns(\ell)$ has even order, $\#G$ is even. If $\#G=4$, then $G$ is abelian, hence conjugate to a subgroup of $\Cs(\ell)$ or $\Cns(\ell)$; thus, if the former is not true, then $G$ must be cyclic. If $\ell\equiv 1\bmod 4$, then any element of $\Ns(\ell)$ of order $4$ is diagonalizable by Lemma~\ref{lem:fcns}, so such a $G$ must be conjugate to a subgroup of $\Cs(\ell)$. In this case,
\begin{equation}
\label{eqn:d8}
\angles{\mathcal{A}_\s(1,1),\mathcal{D}_\s(1,-1)}\simeq D_8
\end{equation}
is a non-abelian subgroup of order $8$, hence div-minimal. Otherwise, if $\ell\equiv 3\bmod 4$, pick a primitive root $\alpha\in\F_\ell^\times$; the subgroup $\angles{\mathcal{A}_\s(\alpha,\alpha^{(\ell-3)/2})}$ is cyclic of order $4$, and cannot be conjugate to a subgroup of $\Cs(\ell)$ as $-1\in\F_\ell^\times$ is not a square, so it is div-minimal.

Now, for each $q\in\Pi_\odd(\ell-1)$, the subgroup
\begin{equation*}
\angles{\mathcal{A}_\s(1,1),\mathcal{D}_\s(\alpha^{(\ell-1)/q},\alpha^{-(\ell-1)/q})}\simeq D_{2q}
\end{equation*}
is non-abelian and of order $2q$, hence div-minimal. Thus,
\begin{equation}
\label{eqn:divminima}
2\cdot(\Pi_\odd(\ell-1)\cup\{4\text{ if }4\mid(\ell-1),\text{ else }2\})
\end{equation}
is the set of div-minima of $\I(\GL_2(\ell);\Ns(\ell))$, for which the corresponding div-minimal subgroups each contain $\mathcal{A}_\s(1,1)$ (if $\ell\equiv 3\bmod 4$, then this is not the case for the div-minimal subgroup of order $4$; however, any other integer in $\Delta(4\mid 2(\ell-1)^2)$ will either be divisible by some other element of (\ref{eqn:divminima}), or divisible by $8$. Since the order-$8$ subgroup (\ref{eqn:d8}) contains $\mathcal{A}_\s(1,1)$, the rest of the argument goes through unchanged). Since
\begin{equation*}
\Ns\simeq(\Z/(\ell-1)\Z\times\Z/(\ell-1)\Z)\rtimes\Z/2\Z
\end{equation*}
with the non-trivial element $\mathcal{A}_\s(1,1)\in\Z/2\Z$ corresponding to the automorphism in Lemma~\ref{lem:nssubgpordallmult}, it follows that
\begin{equation*}
\Delta(\{\ell(\ell+1)/2\},\{\ell(\ell+1)(\ell-1)^2/2q:q\in\Pi_\odd(\ell-1)\cup\{4\text{ if }4\mid(\ell-1),\text{ else }2\}\})
\end{equation*}
is in fact the set of possible indices of subgroups of $\GL_2(\ell)$ belonging to $\Ns(\ell)$.
\end{proof}

\begin{lemma}
\label{lem:nnssubgpordallmult}
Let $G:=\Z/n\Z\rtimes_\varphi A$, where $A$ is abelian. Then for any $H\le G$ containing $A$, we have
\begin{equation*}
\{\#K:H\le K\le G\}=\Delta(\#H\mid\#G).
\end{equation*}
The same is true for any $H\le G$ projecting trivially onto $A$. 
\end{lemma}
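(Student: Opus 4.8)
The plan is to treat the two cases of the hypothesis---$A\le H$, and $H$ contained in the normal factor $N:=\Z/n\Z$---by the same mechanism, exploiting that $N$ is cyclic. In either case the inclusion $\{\#K:H\le K\le G\}\subseteq\Delta(\#H\mid\#G)$ is immediate from Lagrange's theorem, so the work is to produce, for each $m$ with $\#H\mid m\mid\#G=n\cdot\#A$, a subgroup $K$ with $H\le K\le G$ and $\#K=m$. The recurring observation is that every subgroup of the cyclic group $N$ is characteristic in $N$, hence normalized by $A$ inside $G$; thus $N'\rtimes_\varphi A'$ is a subgroup of $G$ of order $\#N'\cdot\#A'$ whenever $N'\le N$ and $A'\le A$. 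This is what makes the argument far shorter than that of Lemma~\ref{lem:nssubgpordallmult}: there $A=\Z/2\Z$ acts on a \emph{product} of two cyclic groups and one must invoke Goursat's lemma to describe the stable subgroups, whereas here $A$ acts on a single cyclic group and all of its subgroups are available.

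For the first case, suppose $A\le H$. A direct computation in the semidirect product shows that if $(x,a)\in H$ then right-multiplying by $(0,a^{-1})\in A$ gives $(x,1)\in H$, so $H=(H\cap N)\rtimes_\varphi A$ and $\#H=\#(H\cap N)\cdot\#A$. Given $m$ as above, one checks $\#A\mid m$, $m/\#A\mid n$, and $\#(H\cap N)\mid m/\#A$; letting $N'$ be the unique subgroup of $N$ of order $m/\#A$ then gives $H\cap N\le N'$, and $K:=N'\rtimes_\varphi A$ has order $m$ and contains $H$.

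For the second case, suppose $H\le N$. Write $m=\#H\cdot d$ with $d\mid(n/\#H)\cdot\#A$, and set $d_1:=\gcd(d,n/\#H)$ and $d_2:=d/d_1$. Comparing $p$-adic valuations and using $v_p(d)\le v_p(n/\#H)+v_p(\#A)$ shows $d_1\mid n/\#H$ and $d_2\mid\#A$. Let $N'\le N$ be the unique subgroup of order $\#H\cdot d_1$, which contains $H$ since $N$ is cyclic, and let $A'\le A$ be a subgroup of order $d_2$, which exists because a finite abelian group has a subgroup of every order dividing its order. Then $K:=N'\rtimes_\varphi A'$ satisfies $H\le N'\le K$ and $\#K=\#H\cdot d_1d_2=m$, as required.

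The only step demanding any care---and the main (mild) obstacle---is the elementary splitting $d=d_1d_2$ in the second case: the fact that $d\mid ab$ forces $\gcd(d,a)\mid a$ and $d/\gcd(d,a)\mid b$, which is a one-line check on prime valuations. Everything else is routine bookkeeping with cyclic subgroups and semidirect-product orders.
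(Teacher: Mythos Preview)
Your proof is correct and follows essentially the same approach as the paper's: in both cases you reduce to choosing an appropriate cyclic subgroup $N'\le N$ (automatically $\varphi$-stable) together with a subgroup $A'\le A$, and form $K=N'\rtimes_\varphi A'$. The only difference is cosmetic: the paper simply asserts that any $d\mid[G:H]$ factors as $d_1\mid[\Z/n\Z:\pi_1(H)]$ and $d_2\mid\#A$, whereas you supply the explicit splitting $d_1=\gcd(d,n/\#H)$, $d_2=d/d_1$ and verify it via valuations.
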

\begin{proof}
As in Lemma~\ref{lem:nssubgpordallmult}, the projection $\pi_1(H)$ is a subgroup of $\Z/n\Z$, hence for any $d\mid [G:H]=n/\#\pi_1(H)$, there exists a unique subgroup $K_1\le\Z/n\Z$ of order $d\cdot\#\pi_1(H)$ containing $\pi_1(H)$. Letting $\alpha$ be a generator of $\Z/n\Z$, observe that any automorphism of $\Z/n\Z$ is of the form $\alpha\mapsto\alpha^i$, where $(i,n)=1$, hence any cyclic subgroup of $\Z/n\Z$ is $\varphi(a)$-invariant for each $a\in A$. Thus, $K:=K_1\rtimes_\varphi A$ is a subgroup of $G$ containing $H$ as an index-$d$ subgroup, as desired.

For the second assertion, note that any $d\mid [G:H]$ factors as a product of $d_1\mid [\Z/n\Z:\pi_1(H)]$ and $d_2\mid\# A$. Letting $K_1$ be the unique order $d_1\cdot\#\pi_1(H)$ subgroup of $\Z/n\Z$ containing $\pi_1(H)$ and $K_2$ be a subgroup of $A$ of order $d_2$, we see that $K:=K_1\rtimes_\varphi K_2$ is again a subgroup of $G$ containing $H$ as an index-$d$ subgroup.
\end{proof}

\begin{lemma}
\label{lem:index2subgpint}
Let $H\le G$ such that $H\not\le N$, for some index-$2$ subgroup $N$ of $G$. Then $[H:H\cap N]=2$.
\end{lemma}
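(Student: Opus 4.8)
The plan is to exploit the fact that an index-$2$ subgroup is automatically normal, together with the second isomorphism theorem. First I would note that $N\trianglelefteq G$, since $[G:N]=2$, so the quotient map $q\colon G\to G/N$ is well defined and $\#(G/N)=2$. Restricting $q$ to $H$ gives a homomorphism $q|_H\colon H\to G/N$ whose kernel is precisely $H\cap N$. The hypothesis $H\not\le N$ says exactly that $q|_H$ is nontrivial, so its image is all of $G/N$; hence $H/(H\cap N)\cong G/N$ has order $2$, i.e.\ $[H:H\cap N]=2$.

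Equivalently, one may argue that $HN$ is a subgroup of $G$ (using normality of $N$) strictly containing $N$, so $HN=G$ by maximality of the index-$2$ subgroup $N$, and then $[H:H\cap N]=[HN:N]=[G:N]=2$ by the second isomorphism theorem. I do not anticipate any real obstacle here; the only point worth flagging is that index-$2$ subgroups are always normal, which is what makes both quotients legitimate.
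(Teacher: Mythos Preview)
Your argument is correct. The paper takes a slightly different, more hands-on route: it picks $\gamma\in H\setminus N$, uses the coset decomposition $G=N\sqcup\gamma N$, and observes that left multiplication by $\gamma$ gives bijections $H\cap N\hookrightarrow H\cap\gamma N$ and $H\cap\gamma N\hookrightarrow H\cap N$, forcing these two pieces of $H$ to have equal size and hence $[H:H\cap N]=2$. Your approach is more conceptual---it packages the same idea via normality of index-$2$ subgroups and the second isomorphism theorem (or equivalently the restricted quotient map), avoiding the explicit bijection bookkeeping. The paper's version never invokes normality or any isomorphism theorem, so it is marginally more self-contained, but your version is shorter and arguably cleaner; both are entirely standard and there is no substantive gap in either.
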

\begin{proof}
Let $\gamma\in H\cap(G\setminus N)$. Then $G=N\sqcup\gamma N$, and left multiplication by $\gamma$ is a bijective map of sets on $G$, mapping $N$ to $\gamma N$ and vice versa. Thus, $\gamma\cdot(H\cap N)\subset H\cap\gamma N$ has cardinality $\#(H\cap N)$, hence $[H:H\cap N]\ge 2$. Moreover, $\gamma\cdot(H\cap\gamma N)\subset H\cap N$ has cardinality $\#(H\cap\gamma N)$, hence $[H:H\cap N]\le 2$, and the result follows.
\end{proof}

\begin{lemma}
\label{lem:fnns}
The set of indices of subgroups $G$ of $\GL_2(\ell)$ such that $G$ belongs to $\Nns(\ell)$ is the set of positive integers divisible by $\ell(\ell-1)/2$ and dividing $\ell(\ell+1)(\ell-1)^2/2$.
\end{lemma}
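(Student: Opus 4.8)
The plan is to show that the two extreme indices — namely $[\GL_2(\ell):\Nns(\ell)]=\ell(\ell-1)/2$ (using the routine $\#\GL_2(\ell)=\ell(\ell+1)(\ell-1)^2$ and $\#\Nns(\ell)=2(\ell^2-1)$) and $\ell(\ell+1)(\ell-1)^2/2$, the index of a subgroup of order $2$ — are respectively the div-minimal and div-maximal indices of a subgroup of $\GL_2(\ell)$ belonging to $\Nns(\ell)$, and then to fill in all intermediate indices via Lemma~\ref{lem:nnssubgpordallmult}.

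For the ``easy'' containment I would argue as follows. Suppose $G$ belongs to $\Nns(\ell)$; after conjugating we may assume $G\le\Nns(\ell)$, so $\#G\mid\#\Nns(\ell)=2(\ell^2-1)$ and hence $\ell(\ell-1)/2\mid[\GL_2(\ell):G]$. Moreover $G\not\le\Cns(\ell)$, since otherwise $G$ would be conjugate into $\Cns(\ell)$ and so could not belong to $\Nns(\ell)$; thus $G$ contains some $\gamma\in\Nns(\ell)\setminus\Cns(\ell)$, which has nontrivial image in $\Nns(\ell)/\Cns(\ell)\simeq\Z/2\Z$ and therefore even order. Hence $2\mid\#G$ and $[\GL_2(\ell):G]\mid\ell(\ell+1)(\ell-1)^2/2$, so $[\GL_2(\ell):G]\in\Delta(\ell(\ell-1)/2\mid\ell(\ell+1)(\ell-1)^2/2)$.

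For the reverse containment I would produce a single well-chosen subgroup of order $2$ belonging to $\Nns(\ell)$ and apply Lemma~\ref{lem:nnssubgpordallmult}. Among the $\ell+1$ pairs $(x,y)\in\F_\ell^2$ with $x^2-\epsilon y^2=1$ (the norm-$1$ elements of $\F_{\ell^2}/\F_\ell$), only $(\pm 1,0)$ have $y=0$, so since $\ell+1>2$ we may pick $(x_0,y_0)$ with $y_0\ne 0$. Then $\gamma_0:=\mathcal{A}_\ns(x_0,y_0)\in\Nns(\ell)\setminus\Cns(\ell)$ satisfies $\gamma_0^2=(x_0^2-\epsilon y_0^2)I=I$ and $\gamma_0\ne\pm I$, so $H_0:=\angles{\gamma_0}$ meets $\Cns(\ell)$ trivially and is a complement to the normal subgroup $\Cns(\ell)$ in $\Nns(\ell)$; that is, $\Nns(\ell)\simeq\Cns(\ell)\rtimes H_0$ with $\Cns(\ell)$ cyclic of order $\ell^2-1$. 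The key observation is that $H_0$ \emph{belongs to} $\Nns(\ell)$: since every nonsplit Cartan subgroup is cyclic of order $\ell^2-1$ and contains $-I$, its unique subgroup of order $2$ is $\angles{-I}$, and as $\gamma_0\ne -I$ the group $H_0$ is not conjugate to a subgroup of $\Cns(\ell)$. Hence any $K$ with $H_0\le K\le\Nns(\ell)$ also belongs to $\Nns(\ell)$ (it is conjugate into $\Nns(\ell)$, and, containing $H_0$, is not conjugate into $\Cns(\ell)$). Applying Lemma~\ref{lem:nnssubgpordallmult} to $\Nns(\ell)\simeq\Z/(\ell^2-1)\Z\rtimes H_0$ with $H=H_0$, the orders of such $K$ are precisely $\Delta(2\mid 2(\ell^2-1))$, which translates into the index set $\Delta(\ell(\ell-1)/2\mid\ell(\ell+1)(\ell-1)^2/2)\subseteq\I(\GL_2(\ell);\Nns(\ell))$. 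Combined with the previous paragraph, this gives equality.

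The step I expect to be the crux is confirming that $H_0$ genuinely belongs to $\Nns(\ell)$ rather than merely sitting inside it — concretely, that $H_0$ is not accidentally conjugate into a nonsplit Cartan subgroup. This is exactly why the choice $y_0\ne 0$ matters (forcing $\gamma_0\ne -I$), and it rests on the elementary fact that $-I$ is the only involution in a nonsplit Cartan subgroup of $\GL_2(\ell)$.
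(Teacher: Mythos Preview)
Your proof is correct and follows essentially the same strategy as the paper's: bound the indices via $\#\Nns(\ell)$ and the parity constraint, exhibit an order-$2$ subgroup belonging to $\Nns(\ell)$, and then fill in all intermediate orders via Lemma~\ref{lem:nnssubgpordallmult}. The paper's choice of involution is simply $\mathcal{A}_\ns(1,0)$ (and it cites Lemma~\ref{lem:index2subgpint} rather than arguing parity directly); your detour through a norm-$1$ element with $y_0\ne 0$ is harmless but unnecessary, since every $\mathcal{A}_\ns(x,y)$ has trace $0\ne -2$ and hence can never equal $-I$.
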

\begin{proof}
Since $\#\Nns(\ell)=2(\ell^2-1)$, we have
\begin{equation*}
\{[\GL_2(\ell):G]:G\le\Nns(\ell)\}\subseteq\Delta(\ell(\ell-1)/2\mid\ell(\ell+1)(\ell-1)^2),
\end{equation*}
however, certain subgroups of $\Nns(\ell)$ may instead lie in the standard subgroup $\Cns(\ell)$. By Lemma~\ref{lem:index2subgpint}, any subgroup of $\Nns(\ell)$ that is not contained in $\Cns(\ell)$ must have order divisible by $2$. One such order-$2$ subgroup is $\angles{\mathcal{A}_\ns(1,0)}$ (which is not conjugate to $\angles{\mathcal{D}_\ns(1,0)}$, the unique order-$2$ subgroup of $\Cns(\ell)$), so by Lemma~\ref{lem:nnssubgpordallmult}, the set of possible indices of subgroups of $\GL_2(\ell)$ belonging to $\Nns(\ell)$ is
\begin{equation*}
\Delta([\GL_2(\ell):\Nns(\ell)]\mid\#\GL_2(\ell)/2),
\end{equation*}
as desired.
\end{proof}

\subsection{Normalizer of a split Cartan}
\label{subsec:s}

\begin{lemma}
\label{lem:sz}
The set of indices of subgroups $G$ of $\Ns(\ell)$ such that $G$ belongs to $Z(\ell)$ is the set of positive integers divisible by $2(\ell-1)$ and dividing $2(\ell-1)^2$.
\end{lemma}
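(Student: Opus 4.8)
The plan is to imitate the proof of Lemma~\ref{lem:fz}, with $\GL_2(\ell)$ replaced by $\Ns(\ell)$. The first thing to settle is that, for a subgroup $G\le\Ns(\ell)$, belonging to $Z(\ell)$ is the same as being contained in $Z(\ell)$. Indeed, $Z(\ell)$ consists of scalar matrices, hence is central in $\GL_2(\ell)$, so any $\GL_2(\ell)$-conjugate of a subgroup of $Z(\ell)$ is again contained in $Z(\ell)$; and since $Z(\ell)$ is the unique one of the five standard subgroups $Z,\Cs,\Cns,\Ns,\Nns$ contained in $Z(\ell)$, the extra condition in the definition of ``belongs to'' is vacuous. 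Thus the set we must compute is $\{[\Ns(\ell):G]:G\le Z(\ell)\}$.

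Next I would use that $Z(\ell)\simeq\F_\ell^\times$ is cyclic of order $\ell-1$, so it has exactly one subgroup of each order $m\mid(\ell-1)$ and no others; combined with $\#\Ns(\ell)=2(\ell-1)^2$, this gives
\[
\{[\Ns(\ell):G]:G\le Z(\ell)\}=\{2(\ell-1)^2/m:m\mid(\ell-1)\}.
\]
Finally, since $m\mapsto(\ell-1)/m$ permutes the divisors of $\ell-1$, the right-hand side equals $\{2(\ell-1)k:k\mid(\ell-1)\}$, and an integer $n=2(\ell-1)k$ divides $2(\ell-1)^2$ exactly when $k\mid(\ell-1)$; hence this set is precisely $\Delta(2(\ell-1)\mid 2(\ell-1)^2)$, as claimed.

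The argument involves no real obstacle: the index computation is routine given $\#\Ns(\ell)=2(\ell-1)^2$ and the cyclicity of $Z(\ell)$, and the only point worth stating carefully is the reduction of ``belonging to $Z(\ell)$'' to literal containment, which follows from the centrality and minimality of $Z(\ell)$.
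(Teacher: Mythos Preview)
Your proof is correct and follows essentially the same approach as the paper's, which simply writes $\I(\Ns(\ell);Z(\ell))=\{[\Ns(\ell):G]:G\le Z(\ell)\}=\Delta([\Ns(\ell):Z(\ell)]\mid\#\Ns(\ell))$ after invoking that finite abelian groups have subgroups of every order dividing the group order. Your version is more explicit in justifying why ``belongs to $Z(\ell)$'' reduces to literal containment and in carrying out the index bijection, but the underlying argument is identical.
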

\begin{proof}
Since finite abelian groups have subgroups of every order dividing the order of the groups, it follows that
\begin{equation*}
\I(\Ns(\ell);Z(\ell))=\{[\Ns(\ell):G]:G\le Z(\ell)\}=\Delta([\Ns(\ell):Z(\ell)]\mid\#\Ns(\ell)),
\end{equation*}
as desired.
\end{proof}

\begin{lemma}
\label{lem:scs}
The set of indices of subgroups $G$ of $\Ns(\ell)$ such that $G$ belongs to $\Cs(\ell)$ is the set of positive integers divisible by $2$ and dividing an element of
\begin{equation*}
\{2(\ell-1)^2/q:q\in\Pi(\ell-1)\}.
\end{equation*}
\end{lemma}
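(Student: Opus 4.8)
The plan is to reduce the problem to a statement about the possible \emph{orders} of subgroups and then perform an elementary divisibility translation, following the proof of Lemma~\ref{lem:fcs} closely. First I would note that $\#\Ns(\ell)=2(\ell-1)^2$ is fixed, so for any $G\le\Ns(\ell)$ the index $[\Ns(\ell):G]=2(\ell-1)^2/\#G$ depends only on $\#G$. If $G$ belongs to $\Cs(\ell)$, then $G$ is conjugate in $\GL_2(\ell)$ to some $H\le\Cs(\ell)$ with $\#H=\#G$, and $H$ must be non-scalar: a diagonal matrix conjugate to a scalar is itself scalar (scalars are central, hence conjugate only to themselves), so if $H$ were scalar then $G$ would be conjugate into $Z(\ell)$, contradicting that $G$ belongs to $\Cs(\ell)$ rather than $Z(\ell)$. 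Conversely, any non-scalar subgroup $H$ of $\Cs(\ell)$ is a subgroup of $\Ns(\ell)$ belonging to $\Cs(\ell)$, since among $Z,\Cs,\Cns,\Ns,\Nns$ the only one properly contained in $\Cs$ is $Z$, and $H\not\le Z$. Hence $\I(\Ns(\ell);\Cs(\ell))=\{2(\ell-1)^2/\#H:H\le\Cs(\ell)\text{ non-scalar}\}$; in particular this set depends only on which orders are realized by non-scalar subgroups of $\Cs(\ell)$.

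By the argument in Lemma~\ref{lem:fcs} (using $\Cs(\ell)\simeq\F_\ell^\times\times\F_\ell^\times$, with the scalars embedded diagonally), these realized orders are exactly the divisors of $(\ell-1)^2$ that exceed $1$. Therefore $\I(\Ns(\ell);\Cs(\ell))=\{2(\ell-1)^2/m:1<m\mid(\ell-1)^2\}$, and it remains to check that this equals the set of positive integers divisible by $2$ and dividing an element of $\{2(\ell-1)^2/q:q\in\Pi(\ell-1)\}$. For the inclusion ``$\subseteq$'', given $1<m\mid(\ell-1)^2$, choose a prime $q\mid m$; then $q\in\Pi(\ell-1)$, the integer $2(\ell-1)^2/m$ is even, and it divides $2(\ell-1)^2/q$ because $q\mid m$. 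For ``$\supseteq$'', given an even $n$ dividing $2(\ell-1)^2/q$ for some $q\in\Pi(\ell-1)$, set $m:=2(\ell-1)^2/n$; then $n/2\mid(\ell-1)^2$ forces $m\mid(\ell-1)^2$, and $m\ge q>1$ since $n\le 2(\ell-1)^2/q$, so $n=2(\ell-1)^2/m$ is of the required form.

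I expect no genuine obstacle. Once one observes that the index depends only on the order, the only apparent worry — that a subgroup of $\Ns(\ell)$ conjugate into $\Cs(\ell)$ but not literally contained in $\Cs(\ell)$ might contribute a new index — disappears, and the rest is the order count from Lemma~\ref{lem:fcs} together with bookkeeping. The single point meriting a word of care is the claim that non-scalar subgroups of $\Cs(\ell)$ belong to $\Cs(\ell)$ and not merely to $Z(\ell)$, which rests on the observation that a diagonal matrix conjugate to a scalar is already a scalar.
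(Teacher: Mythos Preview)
Your proof is correct and follows essentially the same approach as the paper's own proof, which simply reads ``The result follows as in Lemma~\ref{lem:fcs} after noting that $[\Ns(\ell):\Cs(\ell)]=2$.'' You have merely unpacked in detail what that one-line reduction entails: that indices in $\Ns(\ell)$ are determined by orders, that the relevant orders are exactly those of non-scalar subgroups of $\Cs(\ell)$ (as established in Lemma~\ref{lem:fcs}), and the final divisibility bookkeeping.
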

\begin{proof}
The result follows as in Lemma~\ref{lem:fcs} after noting that $[\Ns(\ell):\Cs(\ell)]=2$.
\end{proof}

\begin{lemma}
\label{lem:scns}
The set of indices of subgroups $G$ of $\Ns(\ell)$ such that $G$ belongs to $\Cns(\ell)$ is the set of positive integers divisible by $\ell-1$ and dividing $(\ell-1)^2/2^{\nu_2(\ell-1)}$.
\end{lemma}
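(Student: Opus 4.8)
The plan is to first determine exactly which subgroups $G\le\Ns(\ell)$ belong to $\Cns(\ell)$, and then count their indices. By definition, $G$ belongs to $\Cns(\ell)$ precisely when a conjugate of $G$ lies in $\Cns(\ell)$ but not in $Z(\ell)$; since $\Cns(\ell)\simeq\F_{\ell^2}^\times$ is abelian, such a $G$ is abelian, and since every non-scalar element of $\Cns(\ell)$ has irreducible characteristic polynomial over $\F_\ell$ (equivalently, eigenvalues in $\F_{\ell^2}\setminus\F_\ell$), no conjugate of $G$ lies in any split Cartan; in particular $G\not\le\Cs(\ell)$. First I would use this: $G$ contains some anti-diagonal element $a=\mathcal{A}_\s(x,y)$, so $[G:G\cap\Cs(\ell)]=2$ by Lemma~\ref{lem:index2subgpint}. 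A one-line computation shows $a$ commutes with $\mathcal{D}_\s(u,v)$ only if $u=v$; as $G$ is abelian, this forces $G_0:=G\cap\Cs(\ell)$ to be a subgroup of $Z(\ell)$, so $G=G_0\sqcup aG_0$ with $a^2=xy\cdot I\in G_0$. The eigenvalues of $a$ are $\pm\sqrt{xy}$, so $G$ embeds in no split Cartan exactly when $xy$ is a non-square in $\F_\ell^\times$; conversely, when $xy$ is a non-square, $\F_\ell[a]\simeq\F_{\ell^2}$, the centralizer of $a$ is a nonsplit Cartan containing both $a$ and the scalars $G_0$, and $G$ is non-scalar, so $G$ belongs to $\Cns(\ell)$. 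Hence the subgroups of $\Ns(\ell)$ belonging to $\Cns(\ell)$ are precisely those of the form $G_0\sqcup\mathcal{A}_\s(x,y)G_0$ with $G_0\le Z(\ell)$, $xy$ a non-square, and $xy\cdot I\in G_0$.

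Next I would pass to orders. Since $G_0\le Z(\ell)\simeq\F_\ell^\times$ and $aG_0$ is disjoint from $\Cs(\ell)$, we get $\#G=2\#G_0$, and for a fixed such $a$ the subgroup $G_0$ may be any subgroup of $Z(\ell)$ containing $a^2$, i.e.\ of order a multiple of $\ord(xy)$ dividing $\ell-1$. The numerical crux is that a non-square $xy\in\F_\ell^\times$ satisfies $\nu_2(\ord(xy))=\nu_2(\ell-1)$: indeed $xy$ is a square iff $\ord(xy)\mid(\ell-1)/2$, so $xy$ non-square forces $2^{\nu_2(\ell-1)}\mid\ord(xy)\mid\#G_0$; conversely a non-square of order exactly $2^{\nu_2(\ell-1)}$ exists, so every multiple of $2^{\nu_2(\ell-1)}$ dividing $\ell-1$ is realized as $\#G_0$. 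Therefore $[\Ns(\ell):G]=2(\ell-1)^2/\#G=(\ell-1)^2/\#G_0$ runs over $\{(\ell-1)^2/m:2^{\nu_2(\ell-1)}\mid m\mid\ell-1\}$. Writing $m=(\ell-1)/j$, the constraint becomes $j\mid(\ell-1)/2^{\nu_2(\ell-1)}$ (so $j$ is odd) and the index is $(\ell-1)j$; as $j$ ranges over divisors of the odd number $(\ell-1)/2^{\nu_2(\ell-1)}$, this is exactly the set of positive integers divisible by $\ell-1$ and dividing $(\ell-1)^2/2^{\nu_2(\ell-1)}$.

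The main obstacle is the structural step: correctly singling out the abelian subgroups of $\Ns(\ell)$ lying outside $\Cs(\ell)$ and using the eigenvalues of their anti-diagonal elements to decide membership in $\Cns(\ell)$ (as opposed to $\Cs(\ell)$ or $Z(\ell)$). Once the description $G=G_0\sqcup\mathcal{A}_\s(x,y)G_0$ is established, the remaining index computation is routine $2$-adic bookkeeping in the spirit of Lemma~\ref{lem:fcns}.
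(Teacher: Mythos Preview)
Your argument is correct and more structurally explicit than the paper's. The paper proceeds by exhibiting one maximal cyclic subgroup $\langle\mathcal{A}_\s(1,\alpha)\rangle\le\Ns(\ell)$ of order $2(\ell-1)$ (with $\alpha$ a primitive root), observing that since $\Cns(\ell)$ is cyclic any $G$ belonging to $\Cns(\ell)$ must be cyclic of order at most $2(\ell-1)$, and then analyzing which subgroups of this single cyclic group fail to lie in $Z(\ell)$---with a parenthetical remark that every other maximal cyclic subgroup of $\Ns(\ell)$ behaves the same way. You instead classify \emph{all} subgroups $G\le\Ns(\ell)$ belonging to $\Cns(\ell)$ directly as $G_0\sqcup\mathcal{A}_\s(x,y)G_0$ with $G_0\le Z(\ell)$ and $xy$ a non-square, using the commutation relation with anti-diagonal elements and the eigenvalue criterion for membership in a nonsplit Cartan. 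Both routes hinge on the same $2$-adic bookkeeping (a non-square has order divisible by $2^{\nu_2(\ell-1)}$), but your approach makes the passage from ``some index is achieved'' to ``these are exactly the indices'' fully explicit, whereas the paper's version leans on the reader to see that analyzing one maximal cyclic subgroup suffices. The paper's proof is shorter; yours is more self-contained.
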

\begin{proof}
Let $\alpha\in\F_\ell^\times$ be a primitive root. Clearly, $\angles{\mathcal{A}_\s(1,\alpha)}\le\Ns(\ell)$ has order $2(\ell-1)$, hence it is an index-$(\ell-1)$ subgroup belonging to $\Cns(\ell)$ (as the maximal cyclic subgroups of $\Cs(\ell)$ are of order $\ell-1$). Since any anti-diagonal element of $\Ns(\ell)$ squares to a diagonal element, $2(\ell-1)$ is the maximal order of an element of $\Ns(\ell)$, hence any subgroup of $\Ns(\ell)$ belonging to $\Cns(\ell)$ has index divisible by $\ell-1$ in $\Ns(\ell)$. Now, since $\mathcal{A}_\s(1,\alpha)^2\in Z(\ell)$ (the same is true for any other order $2(\ell-1)$ cyclic subgroup of $\Ns(\ell)$), a subgroup of $\angles{\mathcal{A}_\s(1,\alpha)}$ is conjugate to a subgroup of $Z(\ell)$ if and only if its order divides $\ell-1$; thus, a subgroup of $\angles{\mathcal{A}_\s(1,\alpha)}$ is not conjugate to a subgroup of $Z(\ell)$ if and only if it contains the unique subgroup of order $2^{\nu_2(\ell-1)+1}$. Since $\angles{\mathcal{A}_\s(1,\alpha)}$ is cyclic, it contains a unique subgroup of each possible order, and therefore
\begin{equation*}
\Delta(\ell-1\mid 2(\ell-1)^2/2^{\nu_2(\ell-1)+1})
\end{equation*}
is indeed the set of indices of subgroups of $\Ns(\ell)$ belonging to $\Cns(\ell)$.
\end{proof}

\begin{lemma}
\label{lem:sns}
The set of indices of subgroups $G$ of $\Ns(\ell)$ such that $G$ belongs to $\Ns(\ell)$ is the set of positive integers dividing an element of
\begin{equation*}
\{(\ell-1)^2/q:q\in\Pi_\odd(\ell-1)\cup\{4\text{ if }4\mid(\ell-1),\text{ else }2\}\}.
\end{equation*}
\end{lemma}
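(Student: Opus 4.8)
The plan is to follow the proof of Lemma~\ref{lem:fns}, but carried out inside $\Ns(\ell)$ in place of $\GL_2(\ell)$. The first step is to unwind the definition of ``belongs to'': among $Z(\ell)$, $\Cs(\ell)$, $\Cns(\ell)$, $\Ns(\ell)$, $\Nns(\ell)$, the only ones that are (conjugate to) subgroups of $\Ns(\ell)$ are $Z(\ell)$, $\Cs(\ell)$, and $\Ns(\ell)$ itself, and $Z(\ell)\le\Cs(\ell)$; hence a subgroup $G\le\Ns(\ell)$ belongs to $\Ns(\ell)$ precisely when $G$ is not conjugate in $\GL_2(\ell)$ to a subgroup of $\Cs(\ell)$. (It is immaterial that $G$ may be conjugate to a subgroup of $\Cns(\ell)$, since $\Cns(\ell)\not\le\Ns(\ell)$.) Note also that any overgroup of such a $G$ inside $\Ns(\ell)$ again belongs to $\Ns(\ell)$, since if $G'$ is conjugate to a subgroup of $\Cs(\ell)$ then so is every subgroup of $G'$.

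Next I would pin down which subgroups of $\Ns(\ell)$ are conjugate to a subgroup of $\Cs(\ell)$. Since $\#\Ns(\ell)=2(\ell-1)^2$ is prime to $\ell$, every element of $G$ is semisimple, so $G$ is conjugate to a subgroup of $\Cs(\ell)$ if and only if it is abelian and every element has both eigenvalues in $\F_\ell$. Conjugation by an anti-diagonal element of $\Ns(\ell)$ interchanges the two diagonal entries, so such an element commutes with $\mathcal{D}_\s(u,v)$ only when $u=v$; hence if $G$ is abelian with $G\not\le\Cs(\ell)$ then $G\cap\Cs(\ell)\le Z(\ell)$ and $G=\angles{G\cap Z(\ell),\mathcal{A}_\s(x,y)}$ for some anti-diagonal $\mathcal{A}_\s(x,y)$ with $\mathcal{A}_\s(x,y)^2=xy\cdot I$, and this $G$ is conjugate to a subgroup of $\Cs(\ell)$ iff $xy\in(\F_\ell^\times)^2$. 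Summarizing: $G\le\Ns(\ell)$ belongs to $\Ns(\ell)$ iff $G$ is non-abelian, or $G=\angles{G\cap Z(\ell),\mathcal{A}_\s(x,y)}$ with $xy$ a non-square in $\F_\ell^\times$.

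With this in hand the argument runs exactly as in Lemma~\ref{lem:fns}. A non-abelian $G\le\Ns(\ell)$ contains an anti-diagonal element, hence has even order, and since $\#G\mid 2(\ell-1)^2$, any odd prime $q\mid\#G$ divides $\ell-1$ and forces $2q\mid\#G$; a non-abelian $2$-subgroup has order at least $8$, while a $G$ of the abelian type above has $\#G=2\,\ord(xy)$ with $xy$ a non-square, so $\nu_2(\#G)=\nu_2(\ell-1)+1$ and thus $\#G=2^{\nu_2(\ell-1)+1}$ if it is a $2$-group. Hence the div-minimal subgroups belonging to $\Ns(\ell)$ have order $2q$ with $q$ ranging over $\Pi_\odd(\ell-1)\cup\{4\text{ if }4\mid\ell-1,\text{ else }2\}$, and they are realized by $\angles{\mathcal{A}_\s(1,1),\mathcal{D}_\s(\alpha^{(\ell-1)/q},\alpha^{-(\ell-1)/q})}\cong D_{2q}$ (for $\alpha\in\F_\ell^\times$ a primitive root and $q$ an odd prime divisor of $\ell-1$), by $\angles{\mathcal{A}_\s(1,1),\mathcal{D}_\s(1,-1)}\cong D_8$ when $4\mid\ell-1$, and by the cyclic group $\angles{\mathcal{A}_\s(\alpha,\alpha^{(\ell-3)/2})}$ of order $4$ --- whose square is $-I$, which is not conjugate to a subgroup of $\Cs(\ell)$ since $-1$ is a non-square --- when $\ell\equiv 3\bmod 4$. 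Using the isomorphism $\Ns(\ell)\cong(\Z/(\ell-1)\Z\times\Z/(\ell-1)\Z)\rtimes\Z/2\Z$ with $\mathcal{A}_\s(1,1)$ in the role of $\tau$, Lemma~\ref{lem:nssubgpordallmult} shows that each div-minimal subgroup $H$ containing $\mathcal{A}_\s(1,1)$ is contained in subgroups of $\Ns(\ell)$ of every order that is a multiple of $\#H$ and divides $\#\Ns(\ell)$, all of which belong to $\Ns(\ell)$ by the first step; this produces every index dividing $[\Ns(\ell):H]=(\ell-1)^2/q$.

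The one point requiring a separate argument --- and the main obstacle --- is the case $\ell\equiv 3\bmod 4$, where the div-minimal order-$4$ subgroup $\angles{\mathcal{A}_\s(\alpha,\alpha^{(\ell-3)/2})}$ is cyclic and does not contain $\mathcal{A}_\s(1,1)$, so Lemma~\ref{lem:nssubgpordallmult} does not apply to it directly. As in Lemma~\ref{lem:fns}, I would handle this by observing that any index $m\mid(\ell-1)^2/2$ other than $(\ell-1)^2/2$ itself either divides $(\ell-1)^2/q$ for some odd prime $q\mid\ell-1$ (and so is realized via $D_{2q}$) or divides $(\ell-1)^2/4$ (and so is realized via $D_8$, which exists for every odd $\ell$ and contains $\mathcal{A}_\s(1,1)$), whereas $(\ell-1)^2/2$ is itself the index of $\angles{\mathcal{A}_\s(\alpha,\alpha^{(\ell-3)/2})}$. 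Combining all of this shows that $\I(\Ns(\ell);\Ns(\ell))$ is exactly the set of positive integers dividing an element of $\{(\ell-1)^2/q:q\in\Pi_\odd(\ell-1)\cup\{4\text{ if }4\mid\ell-1,\text{ else }2\}\}$, as claimed.
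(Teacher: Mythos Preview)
Your argument is correct, but it is much longer than the paper's. The paper's proof of Lemma~\ref{lem:sns} is three lines: it simply observes that a subgroup $G$ of $\GL_2(\ell)$ belongs to $\Ns(\ell)$ if and only if (up to conjugacy) $G\le\Ns(\ell)$ and $G$ belongs to $\Ns(\ell)$, so that
\[
\{G\le\GL_2(\ell):G\text{ belongs to }\Ns(\ell)\}=\{G\le\Ns(\ell):G\text{ belongs to }\Ns(\ell)\}
\]
as sets of conjugacy classes, and hence $\I(\Ns(\ell);\Ns(\ell))$ is obtained from $\I(\GL_2(\ell);\Ns(\ell))$ by dividing every element by $[\GL_2(\ell):\Ns(\ell)]=\ell(\ell+1)/2$; the result then follows immediately from Lemma~\ref{lem:fns}. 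What you do instead is re-run the entire argument of Lemma~\ref{lem:fns} from scratch inside $\Ns(\ell)$: you re-characterize which subgroups are conjugate into $\Cs(\ell)$, re-identify the div-minimal subgroups $D_{2q}$, $D_8$, and the order-$4$ cyclic group, and re-apply Lemma~\ref{lem:nssubgpordallmult} together with the special patch for $\ell\equiv 3\bmod 4$. This is all valid, and your explicit description of the abelian case (the $xy$ non-square criterion) is a nice touch, but it duplicates work already done in Lemma~\ref{lem:fns}; the paper's shortcut is available precisely because ``belongs to $\Ns(\ell)$'' already forces containment in $\Ns(\ell)$ up to conjugacy, so there is nothing new to analyze.
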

\begin{proof}
By Lemma~\ref{lem:fns}, this is obtained by dividing all indices of $\mathcal{I}(\GL_2(\ell);\Ns(\ell))$ by
\begin{equation*}
[\GL_2(\ell):\Ns(\ell)]=\frac{\ell(\ell+1)}{2},
\end{equation*}
as
\begin{equation*}
\{G\le\GL_2(\ell):G\text{ belongs to }\Ns(\ell)\}=\{G\le\Ns(\ell):G\text{ belongs to }\Ns(\ell)\},
\end{equation*}
which clearly yields the desired set.
\end{proof}

\begin{lemma}
\label{lem:snns}
The set of indices of subgroups $G$ of $\Ns(\ell)$ such that $G$ belongs to $\Nns(\ell)$ is the set of positive integers divisible by $(\ell-1)/2$ and dividing $(\ell-1)^2$.
\end{lemma}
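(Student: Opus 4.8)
The plan is to compute, instead of the set of indices directly, the set of \emph{orders} $\#G$ of subgroups $G\le\Ns(\ell)$ belonging to $\Nns(\ell)$. Since $\#\Ns(\ell)=2(\ell-1)^2$, the map $n\mapsto 2(\ell-1)^2/n$ carries $\Delta((\ell-1)/2\mid(\ell-1)^2)$ bijectively onto the set of even divisors of $4(\ell-1)$, so it suffices to show that the set of such orders \emph{is} exactly the set of even divisors of $4(\ell-1)$. I would prove the two inclusions separately: an upper bound $\#G\mid 4(\ell-1)$ coming from Proposition~\ref{prop:subs}, and a construction realizing every even divisor of $4(\ell-1)$.

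For the upper bound, let $G\le\Ns(\ell)$ belong to $\Nns(\ell)$ and let $\bar G$ be its image in $\PGL_2(\ell)$. As $G$ is conjugate into $\Nns(\ell)$ it has no element of order $\ell$, no conjugate of $G$ contains $\SL_2(\ell)$, and $\bar G$ is not isomorphic to $A_4$, $S_4$, or $A_5$ (such groups lie in no normalizer of a Cartan), so by Proposition~\ref{prop:subs} $\bar G$ is cyclic or dihedral. If $\bar G\cong D_{2d}$ with $d\ge 2$, then $\bar G$ embeds in $\Ns(\ell)/Z(\ell)\cong D_{2(\ell-1)}$ (since $G\le\Ns(\ell)$) and, after conjugation, in $\Nns(\ell)/Z(\ell)\cong D_{2(\ell+1)}$, forcing $d\mid\gcd(\ell-1,\ell+1)=2$; hence $\bar G\cong(\Z/2\Z)^2$ and $\#G=4\cdot\#(G\cap Z(\ell))$ divides $4(\ell-1)$. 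If $\bar G$ is cyclic, Proposition~\ref{prop:subs} places some conjugate $G'$ of $G$ in a Cartan subgroup, necessarily split (as $G$ belongs to neither $\Cns(\ell)$ nor $Z(\ell)$), so $G'$ is abelian and each of its elements is diagonalizable over $\F_\ell$. Taking $G'\le\Nns(\ell)$ as well, Lemma~\ref{lem:index2subgpint} gives $[G':G'\cap\Cns(\ell)]=2$, and since an element of $\Cns(\ell)$ diagonalizable over $\F_\ell$ must be scalar, $G'\cap\Cns(\ell)\le Z(\ell)$, whence $\#G\mid 2(\ell-1)$. In all cases $2\mid\#G\mid 4(\ell-1)$, so $\I(\Ns(\ell);\Nns(\ell))\subseteq\Delta((\ell-1)/2\mid(\ell-1)^2)$.

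For the reverse inclusion I would exhibit two families of subgroups of $\Ns(\ell)$ belonging to $\Nns(\ell)$. First, for $Z'\le Z(\ell)$ put $G_{Z'}:=Z'\cdot\angles{\mathcal{A}_\s(1,1)}$, of order $2\#Z'$: since $\mathcal{A}_\s(1,1)=\smallmat{0}{1}{1}{0}$ has eigenvalues $\pm1$ it is conjugate to $\mathcal{D}_\s(1,-1)=\mathcal{A}_\ns(1,0)\in\Nns(\ell)$, so $G_{Z'}$ is conjugate into $\Nns(\ell)$, while it is conjugate to a subgroup of neither $\Cns(\ell)$ (in the cyclic case its generator modulo scalars is regular semisimple of split type, and otherwise $G_{Z'}$ is non-cyclic whereas $\Cns(\ell)$ is cyclic) nor $Z(\ell)$; this yields every order $2d$ with $d\mid\ell-1$, i.e.\ every even divisor of $2(\ell-1)$. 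Second, fix an odd divisor $s$ of $\ell-1$, choose $\mu\in\F_\ell^\times$ of order $2^{\nu_2(\ell-1)}s$ — a non-square, since its order has full $2$-part — and set $g:=\mathcal{A}_\s(1,\mu)$ and $G:=\angles{g,\mathcal{D}_\s(1,-1)}$. Then $g^2=\mu I$ with $\ord(\mu)$ even, so $-I\in\angles g$ and $\mathcal{D}_\s(1,-1)$ conjugates $g$ to $-g\in\angles g$, while $\mathcal{D}_\s(1,-1)\notin\angles g$; hence $\#G=2\,\ord(g)=4\,\ord(\mu)=2^{\nu_2(\ell-1)+2}s$. The eigenvalues $\pm\sqrt\mu$ of $g$ lie in $\F_{\ell^2}\setminus\F_\ell$, so $C_{\GL_2(\ell)}(g)$ is a non-split torus, and $\mathcal{D}_\s(1,-1)$ normalizes it because $\mathcal{D}_\s(1,-1)C_{\GL_2(\ell)}(g)\mathcal{D}_\s(1,-1)^{-1}=C_{\GL_2(\ell)}(-g)=C_{\GL_2(\ell)}(g)$; thus $G$ lies in the normalizer of a non-split torus, a conjugate of $\Nns(\ell)$, and being non-abelian it is conjugate to a subgroup of neither $\Cns(\ell)$ nor $Z(\ell)$, so $G$ belongs to $\Nns(\ell)$. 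As $s$ ranges over the odd divisors of $\ell-1$ this realizes precisely the divisors of $4(\ell-1)$ of $2$-adic valuation $\nu_2(\ell-1)+2$; together with the first family this covers all even divisors of $4(\ell-1)$, which completes the argument. The delicate point is the cyclic case of the upper bound — seeing why such a $G$ has order dividing $2(\ell-1)$ rather than merely $(\ell-1)^2$ — which hinges on a split torus meeting $\Cns(\ell)$ only in scalars.
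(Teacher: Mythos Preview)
Your proof is correct but proceeds quite differently from the paper's. For the upper bound $\#G\mid 4(\ell-1)$, the paper simply computes the intersection $\Ns(\ell)\cap\Nns(\ell)$ explicitly (it is $Z(\ell)\cdot\{\mathcal D_\s(1,1),\mathcal D_\s(1,-1),\mathcal A_\s(\epsilon,1),\mathcal A_\s(-\epsilon,1)\}$, of order $4(\ell-1)$ and isomorphic to $\Z/2(\ell-1)\Z\rtimes\Z/2\Z$), and then invokes Lemma~\ref{lem:index2subgpint} together with the already-proven Lemma~\ref{lem:scns} to conclude; you instead go through Dickson's classification (Proposition~\ref{prop:subs}), splitting into the dihedral case (where $d\mid\gcd(\ell-1,\ell+1)=2$) and the cyclic case. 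For the realization of all even divisors of $4(\ell-1)$ as orders, the paper observes that $\Ns(\ell)\cap\Nns(\ell)$ contains the involution $\mathcal A_\ns(1,0)$ and appeals once to the structural Lemma~\ref{lem:nnssubgpordallmult}; you instead build two explicit families of subgroups by hand. The paper's route is shorter and reuses the machinery already in place (Lemmas~\ref{lem:scns} and~\ref{lem:nnssubgpordallmult}), whereas yours is more self-contained at the cost of invoking Proposition~\ref{prop:subs} and carrying out more explicit constructions. One small notational slip: in the cyclic case you use $G'$ for two different conjugates of $G$ (one inside a split Cartan, one inside $\Nns(\ell)$); what you really use is that diagonalizability over $\F_\ell$ is a conjugation-invariant property, so any conjugate $G''\le\Nns(\ell)$ still consists of diagonalizable elements, whence $G''\cap\Cns(\ell)\le Z(\ell)$ and $\#G\mid 2(\ell-1)$ follows.
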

\begin{proof}
Observe that
\begin{equation*}
Z(\ell)\cdot\{\mathcal{D}_\s(1,1),\mathcal{D}_\s(1,-1),\mathcal{A}_\s(\epsilon,1),\mathcal{A}_\s(-\epsilon,1)\}=\Ns(\ell)\cap\Nns(\ell)\le\Ns(\ell)
\end{equation*}
is a subgroup of order $4(\ell-1)$ contained in $\Nns(\ell)$, isomorphic to $\Z/2(\ell-1)\Z\rtimes\Z/2\Z$. By Lemma~\ref{lem:index2subgpint}, any subgroup of $\Nns(\ell)$ is either contained in $\Cns(\ell)$ or has an index-$2$ subgroup contained in $\Cns(\ell)$, hence by Lemma~\ref{lem:scns}, $\#G\mid 2\cdot 2(\ell-1)=4(\ell-1)$ for any $G\le\Ns(\ell)$ belonging to $\Nns(\ell)$. Moreover, by Lemma~\ref{lem:index2subgpint}, any subgroup of $\Ns(\ell)$ belonging to $\Nns(\ell)$ must have order divisible by $2$; as in Lemma~\ref{lem:fnns}, one such order-$2$ subgroup is $\angles{\mathcal{A}_\ns(1,0)}$, so by Lemma~\ref{lem:nnssubgpordallmult}, the set of possible indices of subgroups of $\Ns(\ell)$ belonging to $\Nns(\ell)$ is
\begin{equation*}
\Delta(\#\Ns(\ell)/4(\ell-1)\mid\#\Ns(\ell)/2),
\end{equation*}
as desired.
\end{proof}

\subsection{Normalizer of a non-split Cartan}
\label{subsec:n}

\begin{lemma}
\label{lem:nz}
The set of indices of subgroups $G$ of $\Nns(\ell)$ such that $G$ belongs to $\Cs(\ell)$ is the set of positive integers divisible by $2(\ell+1)$ and dividing $2(\ell^2-1)$.
\end{lemma}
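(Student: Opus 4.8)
The plan is to compute $\I(\Nns(\ell);\Cs(\ell))$, the set of indices in $\Nns(\ell)$ of those subgroups $G$ that are conjugate into $\Cs(\ell)$ but not into $Z(\ell)$; equivalently, the diagonalizable subgroups of $\Nns(\ell)$ that are not scalar. First I would identify which elements of $\Nns(\ell)$ are diagonalizable over $\F_\ell$. Every $\gamma\in\Cns(\ell)\setminus Z(\ell)$ has eigenvalues in $\F_{\ell^2}\setminus\F_\ell$ (this is the defining feature of the nonsplit Cartan), so such $\gamma$ is never diagonalizable. On the other hand, each anti-diagonal element $\mathcal{A}_\ns(x,y)=\smallmat{x}{-\epsilon y}{y}{-x}$ has trace $0$, so Cayley--Hamilton gives $\mathcal{A}_\ns(x,y)^2=(x^2-\epsilon y^2)I\in Z(\ell)$ with eigenvalues $\pm\sqrt{x^2-\epsilon y^2}$; thus $\mathcal{A}_\ns(x,y)$ is diagonalizable over $\F_\ell$ precisely when $x^2-\epsilon y^2$ is a nonzero square.

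The key reduction is to show that any non-scalar diagonalizable $G\le\Nns(\ell)$ lies in $\langle\mathcal{A},Z(\ell)\rangle$ for a single diagonalizable anti-diagonal element $\mathcal{A}$. Indeed, if $G$ contained two anti-diagonal elements $\mathcal{A},\mathcal{A}'$, their product would lie in $\Cns(\ell)$; since $G$ is diagonalizable this product would have to be diagonalizable, hence scalar by the previous paragraph, forcing $\mathcal{A}'\in\mathcal{A}\,Z(\ell)$ (using $\mathcal{A}^{-1}\in\mathcal{A}\,Z(\ell)$, as $\mathcal{A}^2\in Z(\ell)$). Therefore every anti-diagonal element of $G$ lies in the single coset $\mathcal{A}\,Z(\ell)$, while every element of $G\cap\Cns(\ell)$ is diagonalizable and hence scalar, so $G\subseteq Z(\ell)\sqcup\mathcal{A}\,Z(\ell)=\langle\mathcal{A},Z(\ell)\rangle$.

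It then remains to analyze $\langle\mathcal{A},Z(\ell)\rangle$. Since $\mathcal{A}^2\in Z(\ell)$ and $\mathcal{A}\notin Z(\ell)$, this group has order $2(\ell-1)$; writing $\mathcal{A}^2=\mu^2 I$ with $\mu\in\F_\ell^\times$ (possible as $x^2-\epsilon y^2$ is a square), the non-central involution $\mu^{-1}\mathcal{A}$ together with $Z(\ell)$ splits it as $\langle\mathcal{A},Z(\ell)\rangle\simeq\Z/2\Z\times Z(\ell)\simeq\Z/2\Z\times\Z/(\ell-1)\Z$. The whole group is diagonalizable, so a subgroup belongs to $\Cs(\ell)$ exactly when it is not contained in $Z(\ell)$, i.e.\ when it surjects onto the $\Z/2\Z$ factor; such subgroups have order $2m$ with $m\mid\ell-1$, and each such order is realized by pairing $\mu^{-1}\mathcal{A}$ with the order-$m$ subgroup of the cyclic group $Z(\ell)$ (alternatively via Lemma~\ref{lem:nnssubgpordallmult}). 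Computing indices, $[\Nns(\ell):G]=2(\ell^2-1)/2m=(\ell^2-1)/m$ as $m$ ranges over divisors of $\ell-1$ yields the set of positive integers divisible by $\ell+1$ and dividing $\ell^2-1$, i.e.\ $\Delta(\ell+1\mid\ell^2-1)$, matching the entry for the pair $(\Nns,\Cs)$ in Table~\ref{tab:setml}.

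The hard part will be the reduction in the second paragraph: one must rule out larger diagonalizable subgroups by exploiting that a product of two anti-diagonal elements always lands in $\Cns(\ell)$, where the only diagonalizable elements are scalars. Once this confinement to a single $\langle\mathcal{A},Z(\ell)\rangle$ is established, the order and index bookkeeping is routine, and realizability of all intermediate orders follows from the cyclicity of $Z(\ell)$.
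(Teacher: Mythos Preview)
Your argument is internally sound, but it does not establish the lemma as stated: you conclude with $\Delta(\ell+1\mid\ell^2-1)$, whereas the lemma asserts $\Delta(2(\ell+1)\mid 2(\ell^2-1))$. The mismatch stems from a typo in the lemma statement. The label \texttt{lem:nz}, the paper's one-line proof (which simply invokes Lemma~\ref{lem:sz}, the $Z$-case for $\Ns$), and the numbers in the conclusion all show that this lemma is meant to treat subgroups belonging to $Z(\ell)$, not to $\Cs(\ell)$; it corresponds to the $(\Nns,Z)$ row of Table~\ref{tab:setml}. For $M=Z$ the argument is immediate: $Z(\ell)\le\Nns(\ell)$ is cyclic of order $\ell-1$, so its subgroups realize every order dividing $\ell-1$, giving indices $\Delta([\Nns(\ell):Z(\ell)]\mid\#\Nns(\ell))=\Delta(2(\ell+1)\mid 2(\ell^2-1))$. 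You should have flagged that your final set does not match the stated conclusion.

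What you have actually proved is the content of Lemma~\ref{lem:ncs}, and your answer $\Delta(\ell+1\mid\ell^2-1)$ is correct there. Your route differs somewhat from the paper's. The paper writes down $\Nns(\ell)\cap\Cs(\ell)=Z(\ell)\cdot\{\mathcal{D}_\ns(1,0),\mathcal{A}_\ns(1,0)\}$ explicitly and shows, via an explicit conjugation through $\Ns(\ell)$, that every diagonalizable $G\le\Nns(\ell)$ is conjugate into this fixed group of order $2(\ell-1)$. You instead argue directly that a product of two anti-diagonal elements of $G$ lies in $\Cns(\ell)$ and hence must be scalar, forcing all anti-diagonal elements of $G$ into a single coset $\mathcal{A}\,Z(\ell)$ and thus $G\subseteq\langle\mathcal{A},Z(\ell)\rangle$. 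Both approaches yield the bound $\#G\mid 2(\ell-1)$ and the same set of realized orders; yours is more self-contained, while the paper's reuses the conjugacy machinery from the surrounding lemmas.
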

\begin{proof}
The result follows as in Lemma~\ref{lem:sz} after noting that $Z(\ell)=\{\mathcal{D}_\ns(x,0):x\in\F_\ell^\times\}\le\Nns(\ell)$.
\end{proof}

\begin{lemma}
\label{lem:ncs}
The set of indices of subgroups $G$ of $\Nns(\ell)$ such that $G$ belongs to $\Cs(\ell)$ is the set of positive integers divisible by $\ell+1$ and dividing $\ell^2-1$.
\end{lemma}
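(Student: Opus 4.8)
The strategy is to pin down the exact set of orders of subgroups $G\le\Nns(\ell)$ belonging to $\Cs(\ell)$. By the inclusion lattice in (\ref{eqn:subincl}), the only one of the five standard subgroups properly contained in $\Cs(\ell)$ is $Z(\ell)$, so ``$G$ belongs to $\Cs(\ell)$'' means that $G$ is conjugate in $\GL_2(\ell)$ to a subgroup of $\Cs(\ell)$ but not to a subgroup of $Z(\ell)$; equivalently, $G$ is abelian, every element of $G$ is diagonalizable over $\F_\ell$, and $G$ is not contained in the center. I claim that the orders of such $G$ are precisely the even divisors of $2(\ell-1)$, i.e.\ the elements of $\Delta(2\mid 2(\ell-1))$; dividing $\#\Nns(\ell)=2(\ell^2-1)$ by each of these then yields $\Delta(\ell+1\mid\ell^2-1)$.

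For the upper bound, I would first record that a nonscalar element $\mathcal{D}_\ns(x,y)$ of $\Cns(\ell)$ (that is, with $y\ne 0$) has eigenvalues $x\pm y\sqrt{\epsilon}\in\F_{\ell^2}\setminus\F_\ell$, so the only elements of $\Cns(\ell)$ diagonalizable over $\F_\ell$ are its scalars. Consequently, if $G\le\Nns(\ell)$ belongs to $\Cs(\ell)$, then $G\cap\Cns(\ell)\le Z(\ell)$, and moreover $G\not\le\Cns(\ell)$ (otherwise $G=G\cap\Cns(\ell)$ would be central, so $G$ would belong to $Z(\ell)$). Since $\Cns(\ell)$ has index $2$ in $\Nns(\ell)$, Lemma~\ref{lem:index2subgpint} gives $[G:G\cap\Cns(\ell)]=2$, whence $\#G=2\cdot\#(G\cap\Cns(\ell))$ is even and divides $2(\ell-1)$.

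For the realization, I would use the diagonal subgroup
\[
D:=\Cs(\ell)\cap\Nns(\ell)=Z(\ell)\cup\{\mathcal{A}_\ns(x,0):x\in\F_\ell^\times\}=\{\smallmat{a}{0}{0}{b}:a=\pm b,\ a\in\F_\ell^\times\},
\]
which has order $2(\ell-1)$ and is abelian, isomorphic to $\Z/(\ell-1)\Z\rtimes_\varphi\Z/2\Z$ with trivial action $\varphi$, the $\Z/2\Z$ factor being $\angles{\mathcal{A}_\ns(1,0)}=\angles{\smallmat{1}{0}{0}{-1}}$. By Lemma~\ref{lem:nnssubgpordallmult}, the subgroups $K$ with $\angles{\mathcal{A}_\ns(1,0)}\le K\le D$ realize exactly the orders in $\Delta(2\mid 2(\ell-1))$. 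Each such $K$ is diagonal and contains $\smallmat{1}{0}{0}{-1}$, whose two eigenvalues $1,-1$ are distinct; hence $K$ is not conjugate to any subgroup of $Z(\ell)$, so $K$ belongs to $\Cs(\ell)$.

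Combining the two halves, the orders of subgroups of $\Nns(\ell)$ belonging to $\Cs(\ell)$ are exactly the even divisors of $2(\ell-1)$, so their indices in $\Nns(\ell)$ — obtained by dividing $\#\Nns(\ell)=2(\ell^2-1)$ by each — form the set $\Delta(\ell+1\mid\ell^2-1)$, as claimed. The one genuinely load-bearing step is the upper bound, and its crux is the elementary but essential observation that the nonsplit Cartan group contains no nonscalar element diagonalizable over $\F_\ell$; granting this, $G\cap\Cns(\ell)$ is forced into the center, and Lemma~\ref{lem:index2subgpint} together with the explicit chain of subgroups inside $D$ finishes the argument.
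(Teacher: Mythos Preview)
Your proof is correct and follows essentially the same strategy as the paper: both identify the subgroup $D=\Nns(\ell)\cap\Cs(\ell)=Z(\ell)\cdot\{\mathcal{D}_\ns(1,0),\mathcal{A}_\ns(1,0)\}$ of order $2(\ell-1)$, show that every $G\le\Nns(\ell)$ belonging to $\Cs(\ell)$ has even order dividing $2(\ell-1)$, and realize every such order inside $D$ via Lemma~\ref{lem:nnssubgpordallmult}. Your upper bound, however, is obtained more directly than the paper's: rather than invoking the proof of Lemma~\ref{lem:fnns} and an auxiliary conjugation into $\Ns(\ell)$ and back, you simply observe that no nonscalar element of $\Cns(\ell)$ is diagonalizable over $\F_\ell$, forcing $G\cap\Cns(\ell)\le Z(\ell)$, and then apply Lemma~\ref{lem:index2subgpint} to get $\#G=2\cdot\#(G\cap\Cns(\ell))\mid 2(\ell-1)$; this is both shorter and more transparent.
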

\begin{proof}
Clearly,
\begin{equation}
\label{eqn:gp}
Z(\ell)\cdot\{\mathcal{D}_\ns(1,0),\mathcal{A}_\ns(1,0)\}=\Nns(\ell)\cap\Cs(\ell)\le\Nns(\ell)
\end{equation}
is a subgroup of order $2(\ell-1)$ (isomorphic to $\Z/(\ell-1)\Z\times\Z/2\Z$) belonging to $\Cs(\ell)$. For any $G\le\Nns(\ell)$ belonging to $\Cs(\ell)$, the proof of Lemma~\ref{lem:fnns} shows that $G$ is conjugate to a subgroup of
\begin{equation*}
Z(\ell)\cdot\{\mathcal{D}_\s(1,1),\mathcal{A}_\s(1,1)\}\subseteq\Ns(\ell),
\end{equation*}
since $G$ has exponent dividing $\ell-1$. This group is further conjugate to (\ref{eqn:gp}) (via $\smallmat{1}{1}{-1}{1}$), so $\#G\mid 2(\ell-1)$. Now, a subgroup of (\ref{eqn:gp}) belongs to $\Cs(\ell)$ if and only if contains a multiple of $\mathcal{A}_\ns(1,0)$, that is, if and only if its order is divisible by $2$. Thus,
\begin{equation*}
\I(\Nns(\ell);\Cs(\ell))=\Delta(\#\Nns(\ell)/(2(\ell-1))\mid\#\Nns(\ell)/2),
\end{equation*}
as desired.
\end{proof}

\begin{lemma}
\label{lem:ncns}
The set of indices of subgroups $G$ of $\Nns(\ell)$ such that $G$ belongs to $\Cns(\ell)$ is the set of positive integers divisible by $2$ and dividing an element of
\begin{equation*}
\{2(\ell^2-1)/q:q\in\Pi_\odd(\ell+1)\cup\{2^{\nu_2(\ell-1)+1}\}\}.
\end{equation*}
\end{lemma}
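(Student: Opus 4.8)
The plan is to mirror the argument of Lemma~\ref{lem:sns}, exploiting that $\Cns(\ell)$ is already contained in $\Nns(\ell)$. First I would observe that, by definition, any subgroup $G\le\GL_2(\ell)$ belonging to $\Cns(\ell)$ is conjugate to a subgroup of $\Cns(\ell)$, hence to a subgroup of $\Nns(\ell)$ of the same order; conversely, any subgroup of $\Nns(\ell)$ belonging to $\Cns(\ell)$ is in particular a subgroup of $\GL_2(\ell)$ belonging to $\Cns(\ell)$. Therefore the set of \emph{orders} of subgroups of $\Nns(\ell)$ belonging to $\Cns(\ell)$ coincides with the set of orders of subgroups of $\GL_2(\ell)$ belonging to $\Cns(\ell)$, and consequently $\I(\Nns(\ell);\Cns(\ell))$ is obtained from $\I(\GL_2(\ell);\Cns(\ell))$ by dividing every index by $[\GL_2(\ell):\Nns(\ell)]$.

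Next I would carry out the arithmetic. Since $\#\GL_2(\ell)=\ell(\ell+1)(\ell-1)^2$ and $\#\Nns(\ell)=2(\ell^2-1)$, we have $[\GL_2(\ell):\Nns(\ell)]=\ell(\ell-1)/2$. By Lemma~\ref{lem:fcns}, $\I(\GL_2(\ell);\Cns(\ell))=\Delta(\ell(\ell-1)\mid\{\ell(\ell+1)(\ell-1)^2/q:q\in\Pi_\odd(\ell+1)\cup\{2^{\nu_2(\ell-1)+1}\}\})$. Dividing the lower bound $\ell(\ell-1)$ by $\ell(\ell-1)/2$ gives $2$, and dividing each upper bound $\ell(\ell+1)(\ell-1)^2/q$ gives $2(\ell+1)(\ell-1)/q=2(\ell^2-1)/q$, which is an integer because $q\mid 2(\ell^2-1)$ for every $q$ in the indicated set (for odd $q$ this is clear since $q\mid\ell+1$, and $2^{\nu_2(\ell-1)+1}\mid 2(\ell^2-1)$ since $\nu_2(\ell^2-1)=\nu_2(\ell-1)+\nu_2(\ell+1)\ge\nu_2(\ell-1)$). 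This yields exactly $\Delta(2\mid\{2(\ell^2-1)/q:q\in\Pi_\odd(\ell+1)\cup\{2^{\nu_2(\ell-1)+1}\}\})$, as claimed.

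I do not expect any genuine obstacle here: the only point meriting attention is the justification that the two order-sets agree, which rests entirely on the definition of ``belongs to'' together with the containment $\Cns(\ell)\le\Nns(\ell)$, and the check that $\ell(\ell-1)/2$ divides the relevant bounds so that the rescaling of the $\Delta$-set is legitimate. If one preferred a self-contained derivation, one could instead repeat the analysis of Lemma~\ref{lem:fcns} inside $\Nns(\ell)$: because $\Cns(\ell)$ is cyclic of order $\ell^2-1$ with $Z(\ell)$ its unique subgroup of order $\ell-1$, its subgroup of order $m$ belongs to $\Cns(\ell)$ precisely when $m\nmid\ell-1$, equivalently when $v_p(m)>v_p(\ell-1)$ for some prime $p$, equivalently when $m$ is divisible by some element of $\Pi_\odd(\ell+1)\cup\{2^{\nu_2(\ell-1)+1}\}$; since every such subgroup of $\Cns(\ell)$ lies in $\Nns(\ell)$ and has index $2(\ell^2-1)/m$ there, one again obtains the stated set. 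The heart of the matter is the $p$-adic reformulation already established in Lemma~\ref{lem:fcns}, so the remaining work is routine.
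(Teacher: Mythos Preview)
Your proposal is correct and follows precisely the paper's own approach: the paper's proof reads in full ``The proof is as that of Lemma~\ref{lem:sns}, applied to Lemma~\ref{lem:fcns},'' which amounts exactly to your observation that the order-sets agree (since $\Cns(\ell)\le\Nns(\ell)$) and hence one obtains $\I(\Nns(\ell);\Cns(\ell))$ from $\I(\GL_2(\ell);\Cns(\ell))$ by dividing all indices by $[\GL_2(\ell):\Nns(\ell)]=\ell(\ell-1)/2$. Your explicit arithmetic and the check that the rescaling of the $\Delta$-set is legitimate fill in the details the paper leaves implicit.
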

\begin{proof}
The proof is as that of Lemma~\ref{lem:sns}, applied to Lemma~\ref{lem:fcns}.
\end{proof}

\begin{lemma}
\label{lem:nns}
The set of indices of subgroups $G$ of $\Nns(\ell)$ such that $G$ belongs to $\Ns(\ell)$ is the set of positive integers divisible by $(\ell+1)/2$ and dividing \((\ell^2-1)/2^{\nu_2(\ell-1)}\).
\end{lemma}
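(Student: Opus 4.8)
The plan is to identify the set of orders $\#G$ of subgroups $G\le\Nns(\ell)$ that belong to $\Ns(\ell)$, and then convert to indices in $\Nns(\ell)$ at the end. Write $a:=\nu_2(\ell-1)$. As in the proof of Lemma~\ref{lem:snns}, the group $\Nns(\ell)\cap\Ns(\ell)=Z(\ell)\cdot\{\mathcal{D}_\s(1,1),\mathcal{D}_\s(1,-1),\mathcal{A}_\s(\epsilon,1),\mathcal{A}_\s(-\epsilon,1)\}$ has order $4(\ell-1)$, lies in both $\Ns(\ell)$ and $\Nns(\ell)$, and is isomorphic to $\Z/2(\ell-1)\Z\rtimes\Z/2\Z$. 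I claim the set of orders in question is exactly $\Delta(2^{a+1}\mid 4(\ell-1))$; since $\#\Nns(\ell)=2(\ell^2-1)$ and $4(\ell-1)\mid 2(\ell^2-1)$, passing to indices then yields $\Delta((\ell+1)/2\mid(\ell^2-1)/2^{\nu_2(\ell-1)})$, which is the assertion.

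I would first prove the bound $\#G\mid 4(\ell-1)$ by passing to images in $\PGL_2(\ell)$. Because $\Ns(\ell)/Z(\ell)\cong D_{2(\ell-1)}$ and $\Nns(\ell)/Z(\ell)\cong D_{2(\ell+1)}$, the image $\overline G$ of $G$ in $\PGL_2(\ell)$ is isomorphic to a subgroup of $D_{2(\ell-1)}$ and is a subgroup of $D_{2(\ell+1)}$; since every subgroup of a dihedral group $D_{2n}$ is cyclic of order dividing $n$ (or of order $2$) or dihedral of order $2m$ with $m\mid n$, and $\gcd(\ell-1,\ell+1)=2$, it follows that $\#\overline G\mid 4$, hence $\#G=\#\overline G\cdot\#(G\cap Z(\ell))\mid 4(\ell-1)$ as $\#(G\cap Z(\ell))\mid\ell-1$. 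The group $\Nns(\ell)\cap\Ns(\ell)$ is nonabelian of order $4(\ell-1)$, so it is conjugate into $\Ns(\ell)$ but not into $\Cs(\ell)$, i.e.\ belongs to $\Ns(\ell)$, so $4(\ell-1)$ is attained.

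The crux is the lower bound $2^{a+1}\mid\#G$, which I would prove by showing that $\nu_2(\#G)\le a$ forces $G$ to be conjugate into $\Cs(\ell)$. Set $G_0:=G\cap\Cns(\ell)$, a cyclic group conjugate into $\Ns(\ell)$, so its image in $\PGL_2(\ell)$ is a cyclic subgroup of $\Cns(\ell)/Z(\ell)\cong\Z/(\ell+1)\Z$ that is isomorphic to a subgroup of $D_{2(\ell-1)}$; such a group has order dividing $2$, and order $2$ would give $\#G_0=2\gcd(\#G_0,\ell-1)$, forcing $\nu_2(\#G_0)=a+1>\nu_2(\#G)$, a contradiction, so $G_0\le Z(\ell)$. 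If $G=G_0$ then $G\le Z(\ell)\le\Cs(\ell)$. Otherwise $[G:G_0]=2$ and $G=\langle G_0,\gamma\rangle$ with $\gamma\in\Nns(\ell)\setminus\Cns(\ell)$; since $\gamma$ acts trivially on $G_0\le Z(\ell)$ and $\gamma^2\in Z(\ell)$ (every element of $\Nns(\ell)\setminus\Cns(\ell)$ squares into $Z(\ell)$, as $\mathcal{A}_\ns(x,y)^2=(x^2-\epsilon y^2)I$), the group $G$ is abelian, hence semisimple, hence conjugate into $\Cs(\ell)$ exactly when every element has $\F_\ell$-rational eigenvalues. All elements of $G_0$ are scalar, and each coset element squares to a scalar that differs from $\gamma^2$ by a square; so if $\gamma^2$ is a square scalar we are done, while if $\gamma^2=cI$ with $c$ a non-square then $\ord(c)$ has $2$-part $2^a$, so $\ord(\gamma)=2\ord(c)$ has $2$-part $2^{a+1}$ and $2^{a+1}\mid\#G$, contradicting the assumption. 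In every case $G$ is conjugate into $\Cs(\ell)$.

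Finally, for realizability I would take $H$ to be the cyclic subgroup of order $2^{a+1}$ inside the $\Z/2(\ell-1)\Z$ factor of $\Nns(\ell)\cap\Ns(\ell)$: its index-$2$ subgroup lies in $Z(\ell)$ and is generated by a scalar of multiplicative order $2^a$, hence a non-square, so $H$ has irreducible characteristic polynomial and is not conjugate into $\Cs(\ell)$, while $H\le\Ns(\ell)$. Applying Lemma~\ref{lem:nnssubgpordallmult} to $\Nns(\ell)\cap\Ns(\ell)\cong\Z/2(\ell-1)\Z\rtimes\Z/2\Z$ with $H$ projecting trivially onto the $\Z/2\Z$ factor gives $\{\#K:H\le K\le\Nns(\ell)\cap\Ns(\ell)\}=\Delta(2^{a+1}\mid 4(\ell-1))$; each such $K$ is contained in $\Ns(\ell)$ and contains $H$, hence is conjugate into $\Ns(\ell)$ but not into $\Cs(\ell)$, i.e.\ belongs to $\Ns(\ell)$. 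Together with the two bounds this shows the set of orders is exactly $\Delta(2^{a+1}\mid 4(\ell-1))$, and the index computation above completes the proof. I expect the lower-bound step---in particular verifying $G_0\le Z(\ell)$ and controlling the square of the reflection element $\gamma$---to be the main obstacle.
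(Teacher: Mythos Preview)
Your argument is correct, and it follows the same three-step outline as the paper (upper bound $\#G\mid 4(\ell-1)$, lower bound $2^{\nu_2(\ell-1)+1}\mid\#G$, then Lemma~\ref{lem:nnssubgpordallmult} for realizability), but the execution of the first two steps is genuinely different. For the upper bound, the paper invokes Lemma~\ref{lem:index2subgpint} together with Lemma~\ref{lem:ncs}, whereas you pass to $\PGL_2(\ell)$ and use the elementary fact that a group sitting simultaneously (up to isomorphism) inside $D_{2(\ell-1)}$ and $D_{2(\ell+1)}$ has order dividing $4$; this is a clean shortcut. For the lower bound, the paper reduces to subgroups of $\Nns(\ell)\cap\Ns(\ell)$ and then carries out an explicit case analysis (cyclic with non-diagonalizable generator versus non-abelian), computing in the latter case that $\#(\langle\gamma,\delta\rangle\cap\Cs(\ell))=2^{\nu_2(\ell-1)+1}$ by hand. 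Your argument instead shows directly that any $G\le\Nns(\ell)$ conjugate into $\Ns(\ell)$ with $\nu_2(\#G)\le\nu_2(\ell-1)$ must already lie in $\Cs(\ell)$, by first forcing $G\cap\Cns(\ell)\le Z(\ell)$ via the $\PGL_2$ image and then noting that the square of any reflection-type element $\gamma\in\Nns(\ell)\setminus\Cns(\ell)$ is scalar, so the only obstruction to diagonalizability is that scalar being a non-square---which would force $2^{\nu_2(\ell-1)+1}\mid\ord(\gamma)$. This avoids the paper's matrix computations and also sidesteps the paper's (somewhat implicit) reduction ``it suffices to find the div-minimal subgroups of $\Nns(\ell)\cap\Ns(\ell)$,'' working instead with an arbitrary $G\le\Nns(\ell)$ throughout. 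One cosmetic point: when you write ``$H$ has irreducible characteristic polynomial'' you mean its generator does; otherwise the realizability step via the second assertion of Lemma~\ref{lem:nnssubgpordallmult} is exactly as in the paper.
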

\begin{proof}
Observe that
\begin{equation*}
Z(\ell)\cdot\{\mathcal{D}_\ns(1,0),\mathcal{D}_\ns(0,1),\mathcal{A}_\ns(1,0),\mathcal{A}_\ns(0,1)\}=\Nns(\ell)\cap\Ns(\ell)\le\Nns(\ell)
\end{equation*}
is a subgroup of order $4(\ell-1)$ contained in $\Ns(\ell)$. Moreover, by Lemma~\ref{lem:index2subgpint}, any subgroup of $\Ns(\ell)$ is either contained in $\Cs(\ell)$ or has an index-$2$ subgroup contained in $\Cs(\ell)$, hence by Lemma~\ref{lem:ncs}, $\#G\mid 2\cdot 2(\ell-1)=4(\ell-1)$ for any $G\le\Nns(\ell)$ belonging to $\Ns(\ell)$. Now, any subgroup of $\Nns(\ell)$ belonging to $\Ns(\ell)$ of order $4(\ell-1)$ is conjugate to $\Nns(\ell)\cap\Ns(\ell)$, so it suffices to find the div-minimal subgroups of $\Nns(\ell)\cap\Ns(\ell)$ belonging to $\Ns(\ell)$. Any such subgroup is either cyclic and generated by a non-diagonalizable matrix or non-abelian. Thus, in the first case, it must be generated by an element of $Z(\ell)\cdot\mathcal{D}_\ns(0,1)$, or, if $\ell\equiv 1\bmod 4$, an element of $Z(\ell)\cdot\mathcal{A}_\ns(0,1)$; regardless, the div-minimal order is $2^{\nu_2(\ell-1)+1}$ as in Lemma~\ref{lem:fcns}. In the latter case, it must be generated by two elements, each from a distinct set in
\begin{equation*}
\{Z(\ell)\cdot\mathcal{D}_\ns(0,1),Z(\ell)\cdot\mathcal{A}_\ns(1,0),Z(\ell)\cdot\mathcal{A}_\ns(0,1)\}.
\end{equation*}
However, given any two such elements, multiplying them gives an element of the unchosen set, hence we may suppose without loss of generality that our subgroup is given by $\angles{\gamma,\delta}$, where $\gamma\in Z(\ell)\cdot\mathcal{D}_\ns(0,1)$ and $\delta\in Z(\ell)\cdot\mathcal{A}_\ns(1,0)$. By Lemma~\ref{lem:index2subgpint}, $\#\angles{\gamma,\delta}$ is div-minimal when $\#(\angles{\gamma,\delta}\cap\Cs(\ell))$ is, so since $2\mid\#\angles{\delta}$, we may assume $\delta=\mathcal{A}_\ns(1,0)$, and since $2^{\nu_2(\ell-1)+1}\mid\#\angles{\gamma}$ as in Lemma~\ref{lem:fcns}, we may assume $\gamma=\mathcal{D}_\ns(0,\beta)$, where $\beta^2=\epsilon^{-1}\alpha^{(\ell-1)/2^{\nu_2(\ell-1)}}$ and $\alpha\in\F_\ell^\times$ is a primitive root. Now, since $\gamma\delta\gamma^{-1}=\mathcal{A}_\ns(-1,0)\in\angles{\gamma,\delta}$, it follows that
\begin{align*}
\#(\angles{\gamma,\delta}\cap\Cs(\ell))&=\#(\{\mathcal{D}_\ns(1,0),\mathcal{D}_\ns(-1,0),\mathcal{A}_\ns(1,0),\mathcal{A}_\ns(-1,0)\}\cdot\angles{\gamma^2})\\
&=\frac{\#\angles{\mathcal{A}_\ns(1,0),\mathcal{A}_\ns(-1,0)}\cdot\#\angles{\gamma^2}}{\#(\angles{\mathcal{A}_\ns(1,0),\mathcal{A}_\ns(-1,0)}\cap\angles{\gamma^2})}\\
&=\frac{4\cdot 2^{\nu_2(\ell-1)}}{2}\\
&=2^{\nu_2(\ell-1)+1},
\end{align*}
so Lemma~\ref{lem:index2subgpint} implies that
\begin{equation*}
\#\angles{\gamma,\delta}=2\cdot\#(\angles{\gamma,\delta}\cap\Cs(\ell))=2\cdot 2^{\nu_2(\ell-1)+1}=2^{\nu_2(\ell-1)+2},
\end{equation*}
which is divisible by $2^{\nu_2(\ell-1)+1}$, the div-minimal order in the first case. Thus, by Lemma~\ref{lem:nnssubgpordallmult}, we have
\begin{equation*}
\I(\Nns(\ell);\Ns(\ell))=\Delta(\#\Nns(\ell)/(4(\ell-1))\mid\#\Nns(\ell)/2^{\nu_2(\ell-1)+1}),
\end{equation*}
as desired.
\end{proof}

\begin{lemma}
\label{lem:nnns}
The set of indices of subgroups $G$ of $\Nns(\ell)$ such that $G$ belongs to $\Nns(\ell)$ is the set of positive integers dividing $\ell^2-1$.
\end{lemma}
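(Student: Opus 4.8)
The plan is to argue exactly as in the proofs of Lemmas~\ref{lem:fnns}, \ref{lem:snns}, and \ref{lem:nns}: pin down the div-minimal subgroups of $\Nns(\ell)$ belonging to $\Nns(\ell)$, and then use Lemma~\ref{lem:nnssubgpordallmult} to produce subgroups of every intermediate order. Recall that $\#\Nns(\ell)=2(\ell^2-1)$, that $\Cns(\ell)\simeq\F_{\ell^2}^\times$ is cyclic of order $\ell^2-1$, and that $\Nns(\ell)=\Cns(\ell)\rtimes\angles{\mathcal{A}_\ns(1,0)}$ with $\mathcal{A}_\ns(1,0)=\smallmat{1}{0}{0}{-1}$ of order $2$, so $\Nns(\ell)$ has the shape $\Z/(\ell^2-1)\Z\rtimes\Z/2\Z$ required by Lemma~\ref{lem:nnssubgpordallmult}. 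Among the five subgroups $Z(\ell),\Cs(\ell),\Cns(\ell),\Ns(\ell),\Nns(\ell)$, only $Z(\ell)$ and $\Cns(\ell)$ are contained in $\Nns(\ell)$, and $Z(\ell)\subseteq\Cns(\ell)$; hence a subgroup $G\le\Nns(\ell)$ belongs to $\Nns(\ell)$ if and only if no conjugate of $G$ is contained in $\Cns(\ell)$.

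First I would bound the indices from above: if $G\le\Nns(\ell)$ belongs to $\Nns(\ell)$, then in particular $G\not\le\Cns(\ell)$, so Lemma~\ref{lem:index2subgpint}, applied to the index-$2$ subgroup $\Cns(\ell)$ of $\Nns(\ell)$, gives $[G:G\cap\Cns(\ell)]=2$; thus $2\mid\#G$ and $[\Nns(\ell):G]$ divides $\ell^2-1$. Next I would exhibit a div-minimal subgroup: the order-$2$ subgroup $\angles{\mathcal{A}_\ns(1,0)}$ belongs to $\Nns(\ell)$, because a conjugate of it lying in the cyclic group $\Cns(\ell)$ would have to coincide with the unique order-$2$ subgroup $\angles{-I}$ of $\Cns(\ell)$, whence $\mathcal{A}_\ns(1,0)$, being conjugate to the central matrix $-I$, would equal $-I$---absurd, since $\mathcal{A}_\ns(1,0)=\smallmat{1}{0}{0}{-1}$. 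The same argument shows that \emph{every} subgroup $K\le\Nns(\ell)$ with $\mathcal{A}_\ns(1,0)\in K$ belongs to $\Nns(\ell)$, since any conjugate of such a $K$ inside $\Cns(\ell)$ would be cyclic and would again force the order-$2$ element $\mathcal{A}_\ns(1,0)$ to be conjugate to, hence equal to, $-I$. Finally, Lemma~\ref{lem:nnssubgpordallmult} applied with $H=\angles{\mathcal{A}_\ns(1,0)}$, which generates the $\Z/2\Z$-factor, yields $\{\#K:H\le K\le\Nns(\ell)\}=\Delta(2\mid 2(\ell^2-1))$, namely the set of even divisors of $2(\ell^2-1)$; combined with the previous two observations, this shows that the set of indices $[\Nns(\ell):G]$ of subgroups belonging to $\Nns(\ell)$ is precisely $\Delta(1\mid\ell^2-1)$, i.e.\ the set of positive integers dividing $\ell^2-1$.

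The only step requiring care is the verification that no subgroup containing the anti-diagonal involution $\mathcal{A}_\ns(1,0)$ can be conjugated into $\Cns(\ell)$; this is what rules out the indices being proper multiples of divisors of $\ell^2-1$, and it rests entirely on the elementary fact that the cyclic group $\Cns(\ell)$, of even order $\ell^2-1$, has exactly one involution, namely the central matrix $-I$. Everything else is bookkeeping parallel to Lemmas~\ref{lem:snns} and \ref{lem:nns} in \S\ref{subsec:n}.
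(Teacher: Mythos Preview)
Your argument is correct and uses exactly the same ingredients as the paper (Lemma~\ref{lem:index2subgpint}, the order-$2$ subgroup $\langle\mathcal{A}_\ns(1,0)\rangle$, and Lemma~\ref{lem:nnssubgpordallmult}); the paper simply packages these by quoting Lemma~\ref{lem:fnns} and dividing all indices there by $[\GL_2(\ell):\Nns(\ell)]=\ell(\ell-1)/2$, exactly as in the proof of Lemma~\ref{lem:sns}. Your version is the unpacked form of that one-line reduction.
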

\begin{proof}
The proof is as that of Lemma~\ref{lem:sns}, applied to Lemma~\ref{lem:fnns}.
\end{proof}

\subsection{Borel cases}
\label{subsec:b}

\begin{lemma}
\label{lem:borelsubgp}
Let $G\le B(\ell)$, and suppose that $G$ does not contain an element of order $\ell$. Then $G$ is conjugate to a subgroup of $\Cs(\ell)$.
\end{lemma}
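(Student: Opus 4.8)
The plan is to reduce the lemma to the elementary fact that a commuting family of diagonalizable operators on $\F_\ell^2$ is simultaneously diagonalizable. Write $U(\ell):=\{\smallmat{1}{b}{0}{1}:b\in\F_\ell\}\le B(\ell)$ for the unipotent radical, so that $B(\ell)/U(\ell)\cong\F_\ell^\times\times\F_\ell^\times$ is abelian, and let $\pi\colon B(\ell)\to B(\ell)/U(\ell)$ denote the projection.

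First I would record the key computation: for $\gamma=\smallmat{a}{b}{0}{a}\in B(\ell)$ one has $\gamma^n=\smallmat{a^n}{na^{n-1}b}{0}{a^n}$ by an immediate induction, so taking $n=\ord(a)$---which divides $\ell-1$ and is therefore prime to $\ell$---shows that if $b\ne 0$ then $\gamma^{\ord(a)}\in U(\ell)\setminus\{I\}$ has order $\ell$. Applying this with $\gamma$ ranging over $G$ and invoking the hypothesis that $G$ has no element of order $\ell$, we conclude both that $G\cap U(\ell)=\{I\}$ and that every $\gamma=\smallmat{a}{b}{0}{d}\in G$ with $a=d$ in fact satisfies $b=0$. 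Since an element $\smallmat{a}{b}{0}{d}$ with $a\ne d$ has the two distinct eigenvalues $a,d\in\F_\ell$ and is hence diagonalizable over $\F_\ell$, this shows every element of $G$ is diagonalizable. Moreover, $G\cap U(\ell)=\{I\}$ means $\pi$ is injective on $G$, so $G$ embeds in an abelian group and is therefore abelian.

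It then remains to simultaneously diagonalize. If every element of $G$ is scalar, then $G\le Z(\ell)\le\Cs(\ell)$ and we are done; otherwise I would pick a non-scalar $A\in G$, which by the above is diagonalizable with distinct eigenvalues $\lambda\ne\mu$ in $\F_\ell$, giving a decomposition $\F_\ell^2=V_\lambda\oplus V_\mu$ into one-dimensional eigenspaces. Because $G$ is abelian, every $g\in G$ commutes with $A$ and hence preserves $V_\lambda$ and $V_\mu$; choosing a basis consisting of one nonzero vector from each eigenspace and letting $P\in\GL_2(\ell)$ be the corresponding change-of-basis matrix yields $P^{-1}GP\le\Cs(\ell)$, as desired.

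I do not anticipate a genuine obstacle here: the only points needing care are the case $a=d$ in the second step (where the order hypothesis is used to rule out nontrivial unipotent parts) and remembering to treat the all-scalar case separately in the last step; the rest is routine linear algebra. One could instead argue cohomologically---a $1$-cocycle of the prime-to-$\ell$ group $\pi(G)$ with values in the $\F_\ell$-module $U(\ell)$ is a coboundary, so $G$ is $U(\ell)$-conjugate to a copy of $\pi(G)$ sitting inside $\Cs(\ell)$---but the eigenspace argument above is more self-contained and fits the style of the surrounding lemmas. This statement may be viewed as a constructive refinement, in the Borel case, of part~(1) of Proposition~\ref{prop:subs}.
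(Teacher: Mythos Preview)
Your proof is correct and takes a genuinely different route from the paper's. The paper passes to the image $H$ of $G$ in $\PGL_2(\ell)$, computes powers of a generator $\smallmat{a}{b}{0}{1}$ of $H$ via the geometric-series identity $1+a+\cdots+a^{n-1}$, and uses the order hypothesis to show $H$ is cyclic; it then invokes Proposition~\ref{prop:subs} (Dickson's classification) together with the observation that elements of $B(\ell)$ always have an eigenvalue in $\F_\ell$ while those of $\Cns(\ell)\setminus Z(\ell)$ never do, to force $G$ into the split Cartan case. By contrast, you stay in $\GL_2(\ell)$, use the order hypothesis to show each element of $G$ is individually diagonalizable and that $G$ injects into the abelian quotient $B(\ell)/U(\ell)$, and then finish by the elementary linear-algebra fact that a commuting family of diagonalizable operators is simultaneously diagonalizable.

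Your argument is more self-contained: it avoids the appeal to Dickson entirely and makes no detour through $\PGL_2(\ell)$. The paper's approach, on the other hand, extracts the slightly sharper intermediate fact that the projective image is cyclic (not merely abelian), and it fits the paper's broader pattern of routing questions through Proposition~\ref{prop:subs}. Your cohomological aside---that $H^1(\pi(G),U(\ell))=0$ since $|\pi(G)|$ is prime to $\ell$, so $G$ is $U(\ell)$-conjugate into $\Cs(\ell)$---is also valid and gives a third proof.
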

\begin{proof}
Let $H$ denote the image of $G$ in $\PGL_2(\ell)$, and observe that $S:=\{x:\smallmat{x}{y}{0}{1}\in H\}$ forms a subgroup of $\F_\ell^\times$, of which we let $a$ be a generator (as it must be cyclic). Then there exists some $\gamma:=\smallmat{a}{b}{0}{1}\in H$, and for any $n$, we have
\begin{equation*}
\gamma^n=\begin{pmatrix}a^n&b(1+a+\cdots+a^{n-1})\\0&1\end{pmatrix},
\end{equation*}
so $a\ne 1$ unless $b=0$, as otherwise $\ord(\gamma)=\ell$ (and all its lifts to $B(\ell)$ have order divisible by $\ell$). If $a\ne 1$, then $1+a+\cdots+a^{n-1}=0$ if and only if
\begin{equation*}
(1-a)(1+a+\cdots+a^{n-1})=1-a^n=0,
\end{equation*}
that is, $\ord(a)\mid n$, from which it follows that $\ord(\gamma)=\ord(a)\mid (\ell-1)$. Letting $\gamma':=\smallmat{a'}{b'}{0}{1}\in H$ and $r$ be such that $a^{-r}=a'$ (which exists as $a'\in S$), we have
\begin{equation*}
\gamma^r\gamma'=\begin{pmatrix}a^ra'&a^rb'+b(1+a+\cdots+a^{r-1})\\0&1\end{pmatrix},
\end{equation*}
and so $b'=-ba^{-r}(1+a+\cdots+a^{n-1})$ since otherwise $\gamma^r\gamma'$ has order $\ell$ as above. But then $\gamma'=\gamma^{-r}$, implying that $H=\angles{\gamma}$ is cyclic. Since every element of $B(\ell)$ has an eigenvalue in $\F_\ell$, whereas no element of $\Cns(\ell)\setminus Z(\ell)$ has any eigenvalues over $\F_\ell$ (as those of $\mathcal{D}_\ns(x,y)\in\Cns(\ell)$ are $x\pm y\sqrt{\epsilon}\in\overline{\F_\ell}$), $G$ is conjugate to a subgroup of $\Cs(\ell)$ by Proposition~\ref{prop:subs}, as desired.
\end{proof}

\begin{lemma}
\label{lem:bm}
The set of indices of subgroups $G$ of $G(\ell)$, $H_1(\ell)$, or $H_2(\ell)$ such that $G$ belongs to a standard subgroup $M(\ell)$ is the set of positive integers divisible by $2\ell$ (resp.\ $\ell$) and dividing $2\ell(\ell-1)$ (resp.\ $\ell(\ell-1)$) if $M$ is $Z$ (resp.\ $\Cs$); the empty set if $M$ is $\Cns$ or $\Ns$; and the set of positive integers divisible by $\ell$ and dividing $\ell(\ell-1)$ if $M$ is $\Nns$.
\end{lemma}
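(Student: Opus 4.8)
The plan is to unwind the definition of ``belongs to'' and reduce everything to elementary computations inside the Borel group $B(\ell)$, using that $G(\ell)$, $H_1(\ell)$, $H_2(\ell)$ are all subgroups of $B(\ell)$, that each contains the normal unipotent subgroup $U=\{\smallmat{1}{b}{0}{1}\}$ (its Sylow-$\ell$), and that the set to be identified is the union over these three groups of the sets $\I(\,\cdot\,;M(\ell))$.

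First I would dispose of the two cases producing $\varnothing$. If $M=\Cns$, a subgroup belonging to $\Cns(\ell)$ would be conjugate to a non-scalar subgroup of $\Cns(\ell)$; but a non-scalar element of $\Cns(\ell)$ has eigenvalues in $\F_{\ell^2}\setminus\F_\ell$, whereas every element of $B(\ell)$, being upper triangular, has its eigenvalues (the diagonal entries) in $\F_\ell$ — impossible. If $M=\Ns$, then since $\Ns(\ell)$ contains no element of order $\ell$, any subgroup conjugate into $\Ns(\ell)$ has no such element, hence by Lemma~\ref{lem:borelsubgp} is conjugate into $\Cs(\ell)$, and therefore does not belong to $\Ns(\ell)$. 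Next, for $M=Z$ the condition ``belongs to $Z(\ell)$'' just means ``consists of scalars'', so such subgroups are exactly the subgroups of $Z(\ell)$ intersected with the ambient group; this intersection is cyclic of order $\ell-1$ for $G(\ell)$ and $(\ell-1)/2$ for $H_1(\ell),H_2(\ell)$, and running over all its subgroups yields the indices $\Delta(2\ell\mid 2\ell(\ell-1))$ and $\Delta(2\ell\mid\ell(\ell-1))$; their union is $\Delta(2\ell\mid 2\ell(\ell-1))$, as claimed.

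The substance of the proof is the cases $M=\Cs$ and $M=\Nns$, both of which I claim give $\Delta(\ell\mid\ell(\ell-1))$. A subgroup belongs to $\Cs(\ell)$ iff it is conjugate into $\Cs(\ell)$ — equivalently, by Lemma~\ref{lem:borelsubgp}, has no element of order $\ell$ — and is non-scalar. By Cauchy's theorem such a subgroup has order prime to $\ell$, hence dividing $2(\ell-1)$ in $G(\ell)$ and $\ell-1$ in $H_i(\ell)$; moreover any of its non-scalar elements has its two diagonal entries equal to some $a$ and $-a$ (the elements with equal diagonal entries being either scalar or of order divisible by $\ell$), so it squares into $Z(\ell)$ and has even order, forcing the subgroup order to be even. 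Conversely, for each even divisor $m$ of $2(\ell-1)$ (resp.\ $\ell-1$), the subgroup generated by a non-scalar order-$2$ diagonal matrix of the ambient group (e.g.\ $\smallmat{1}{0}{0}{-1}\in G(\ell),H_1(\ell)$, or $\smallmat{-1}{0}{0}{1}\in H_2(\ell)$) together with the order-$(m/2)$ subgroup of $Z(\ell)$ (resp.\ of $Z(\ell)\cap H_i(\ell)$) is non-scalar, of order $m$, with no element of order $\ell$. Thus the attainable orders are precisely the even divisors, the index sets are $\Delta(\ell\mid\ell(\ell-1))$ for $G(\ell)$ and $\Delta(\ell\mid\ell(\ell-1)/2)$ for $H_i(\ell)$, and the union is $\Delta(\ell\mid\ell(\ell-1))$. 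For $M=\Nns$, the key observation is that $\{\smallmat{a}{0}{0}{\pm a}:a\in\F_\ell^\times\}=\Cs(\ell)\cap\Nns(\ell)$ is in fact a subgroup of $\Nns(\ell)$, since $\smallmat{a}{0}{0}{-a}=a\cdot\mathcal{A}_\ns(1,0)\in Z(\ell)\,\mathcal{A}_\ns(1,0)$. Hence the explicit subgroups just constructed already lie in $\Nns(\ell)$; being non-scalar with eigenvalues in $\F_\ell$, they are conjugate into neither $\Cns(\ell)$ nor $Z(\ell)$, so they belong to $\Nns(\ell)$, giving $\I(\,\cdot\,;\Nns(\ell))\supseteq\I(\,\cdot\,;\Cs(\ell))$. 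For the reverse inclusion, a subgroup $G$ belonging to $\Nns(\ell)$ has no element of order $\ell$ and is non-scalar, so it is not conjugate into $\Cns(\ell)$; a conjugate of $G$ lying inside $\Nns(\ell)$ must therefore meet the coset $\mathcal{A}_\ns(1,0)\Cns(\ell)$, and every element of that coset has trace $0$, hence squares into $Z(\ell)$ and has even order — so $\#G$ is even, exactly the constraint from the $\Cs(\ell)$ analysis. Combined with the divisibility bounds, this forces the two index sets to coincide.

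The step I expect to be the main obstacle is the $\Nns$ case: a priori ``belongs to $\Nns(\ell)$'' looks like a genuinely new condition, and the crux is to notice both that the relevant subgroups can be realized inside the intersection $\Cs(\ell)\cap\Nns(\ell)$ — so that the ``$\supseteq$'' inclusion comes for free — and that the parity (even-order) constraint cutting down the possibilities is identical to the one in the $\Cs(\ell)$ case. Everything else reduces to routine divisor bookkeeping once the ``belongs to'' conditions have been made explicit.
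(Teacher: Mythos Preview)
Your proposal is correct and follows essentially the same approach as the paper: both use Lemma~\ref{lem:borelsubgp} to dispose of $\Cns$ and $\Ns$, both reduce the $\Cs$ case to showing that the non-scalar $\ell'$-subgroups are exactly those of even order inside $G(\ell)\cap\Cs(\ell)=Z(\ell)\cdot\{\mathcal{D}_\ns(1,0),\mathcal{A}_\ns(1,0)\}$, and both exploit the key containment $G(\ell)\cap\Cs(\ell)\subseteq\Nns(\ell)$ to identify $\I(\,\cdot\,;\Nns(\ell))$ with $\I(\,\cdot\,;\Cs(\ell))$. The one minor difference is in the $\Nns$ upper bound: the paper tacitly conjugates any $\ell'$-subgroup of $G(\ell)$ into the Hall complement $G(\ell)\cap\Cs(\ell)$ and reads off the inclusion in $\Nns(\ell)$ directly, whereas you instead pass to a conjugate inside $\Nns(\ell)$ and use that every element of the non-Cartan coset $\mathcal{A}_\ns(1,0)\Cns(\ell)$ has trace zero (hence even order)---a pleasant, equally short alternative yielding the same parity constraint.
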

\begin{proof}
Evidently, $[G(\ell):G(\ell)\cap\Cs(\ell)]=\ell$, and since $G(\ell)$ contains an element of order $\ell$, any subgroup of $G(\ell)$ conjugate to a subgroup of $\Cs(\ell)$ must have index divisible by $\ell$ in $G(\ell)$. Since
\begin{equation*}
G(\ell)\cap\Cs(\ell)=Z(\ell)\cdot\{\mathcal{D}_s(1,1),\mathcal{D}_s(1,-1)\},
\end{equation*}
the proof of Lemma~\ref{lem:ncs} shows that 
\begin{equation*}
\I(G(\ell);\Cs(\ell))=\Delta(\ell\mid\#G(\ell)/2)=\Delta(\ell\mid\ell(\ell-1)).
\end{equation*}
Also, $\I(G(\ell);Z(\ell))=\Delta(2\ell\mid 2\ell(\ell-1))$ as before. Performing the analogous computations for $H_1(\ell)$ and $H_2(\ell)$ and taking unions gives the result if $M$ is $Z$ or $\Cs$.

For the case where $M$ is $\Cns$, observe that any subgroup of $\Cns(\ell)$ conjugate to a subgroup of $\Cs(\ell)$ is contained in $Z(\ell)$: an element $\mathcal{D}_\ns(x,y)\in\Cns(\ell)$ is diagonalizable if and only if
\begin{equation*}
(2x)^2-4(x^2-\epsilon y^2)=4\epsilon y^2\in(\F_\ell)^2,
\end{equation*}
that is, $y=0$. Thus, $\I(G(\ell),H_1(\ell),H_2(\ell);\Cns(\ell))=\varnothing$ by Lemma~\ref{lem:borelsubgp}.

The case where $M$ is $\Ns$ is immediate from Lemma~\ref{lem:borelsubgp}.

For the case where $M$ is $\Nns$, we must identify the non-scalar subgroups of $G(\ell)$ that are conjugate to a subgroup of $\Nns(\ell)$. These cannot contain an element of order $\ell$, so we may assume that they are subgroups of
\begin{equation*}
G(\ell)\cap\Cs(\ell)=Z(\ell)\cdot\{\mathcal{D}_\ns(1,0),\mathcal{A}_\ns(1,0)\}\subseteq\Nns(\ell).
\end{equation*}
It immediately follows that $\I(G(\ell);\Nns(\ell)=\I(G(\ell);\Cs(\ell))$; performing a similar analysis for $H_1(\ell)$ and $H_2(\ell)$ and taking unions then gives the desired result.
\end{proof}

\section{Proof of Theorem~\ref{thm:3}}

Let $E/K$ be an elliptic curve, for some number field $K$. The Mordell--Weil group $E(K)$ contains a point of order $\ell$ if and only if $\rho_{E,\ell}(\Gal(\Kbar/K))$ fixes a nonzero element of $\F_\ell^2$. Thus, as in the proof of Theorem~\ref{thm:1}, there exists a degree-$d$ number field $K$ and an elliptic curve $E/\Q$ for which $E(K)$ contains a point of order $\ell$ if and only if $\rho_{E,\ell}(\Gal(\Qbar/\Q))$ contains a degree-$t$ subgroup fixing a nonzero element of $\F_\ell^2$ for some $t\mid d$. The same is true of an elliptic curve $E/K$ with $j(E)\in\Q\setminus\{0,1728\}$ if and only if $\rho_{E,\ell}(\Gal(\Kbar/K))$ is a twist of a degree-$t$ subgroup of $\rho_{E',\ell}(\Gal(\Qbar/\Q))$ for some elliptic curve $E'/\Q$ and $t\mid d$, and it fixes a nonzero element of $\F_\ell^2$. For a subgroup $G\le\GL_2(\ell)$, define
\begin{equation}
\label{eqn:iij}
\begin{split}
\I(G_1,\ldots,G_n)&:=\{[G_i:H]:H\le G_i\text{ such that }H'v=v\text{ for some }0\ne v\in\F_\ell^2,\\
&\qquad\text{twist }H'\text{ of }H\text{, and }1\le i\le n\},\\
\I_\Q(G_1,\ldots,G_n)&:=\{[G_i:H]:H\le G_i\text{ such that }Hv=v\text{ for some }0\ne v\in\F_\ell^2\text{ and }1\le i\le n\},\\
\end{split}
\end{equation}
and let $\divmin(S)$ denote the div-minima of a set $S$. By the above, it remains to determine the sets
\begin{equation*}
\mathcal{T}(\ell)=\divmin\bigcup_{E/\Q}\I(G_E(\ell)),\qquad\mathcal{T}_\Q(\ell)=\divmin\bigcup_{E/\Q}\I_\Q(G_E(\ell)),
\end{equation*}
where the elliptic curves in each union are assumed to satisfy $j(E)\ne 0,1728$. We begin by determining the sets $\divmin\I(G_E(\ell))$ individually for every possible image $G_E(\ell)=\rho_{E,\ell}(\Gal(\Qbar/\Q))$; the sets $\divmin\I_\Q(G_E(\ell))$ may be determined analogously.

If $E$ does not have CM, then $G_E(\ell)$ is either $\GL_2(\ell)$ or one of the $63$ exceptional groups listed in Table~\ref{tab:exim} by our assumption of Conjecture~\ref{conj:sutherland}. The results for each exceptional subgroup were computed \cite{code,sutherland2016} using Magma, and the div-minima of the unions of the two sets (\ref{eqn:iij}) of indices over each \(\Excep(\ell)\) are the sets $\I(\Excep(\ell))$ and $\I_\Q(\Excep(\ell))$, respectively, listed in Table~\ref{tab:eejl}. The set $\divmin\I(\GL_2(\ell))$ is determined by Lemma~\ref{lem:fj} below, and appears in Table~\ref{tab:setl}.

Now suppose $E$ has CM. As in the proof of Theorems~\ref{thm:1} and \ref{thm:2}, $G_E(\ell)$ is conjugate to $\Ns(\ell)$ (resp.\ $\Nns(\ell)$) if $\ell\in\mathcal{P}$ (resp.\ $\ell\in\mathcal{Q}$); to either $\Ns(\ell)$, $\Nns(\ell)$, $G(\ell)$, $H_1(\ell)$, or $H_2(\ell)$ if $\ell\in\{3,7,11,19,43,67,163\}$, with each case occurring for some such $E$; and otherwise to either $\Ns(\ell)$ or $\Nns(\ell)$, with each case again occurring for some such $E$ \cite[Prop.~1.14]{zywina2015}. The set $\divmin\I(\Ns(\ell))$ is determined by Lemma~\ref{lem:sj}; the set $\divmin(\I(\Nns(\ell))$ by Lemma~\ref{lem:nj}; and the set $\divmin\I(G(\ell),H_1(\ell),H_2(\ell))$ by Lemma~\ref{lem:bj}. All appear in Table~\ref{tab:setl}.

Finally, the sets appearing in Tables~\ref{tab:tl} and \ref{tab:tjl} are determined by letting $\mathcal{T}(\ell)$ be the div-minima of the set
\begin{equation*}
\I(\Excep(\ell),\GL_2(\ell))\cup\begin{lrdcases}\I(\Ns(\ell))&\text{if }\ell\in\mathcal{P},\\
\I(\Nns(\ell))&\text{if }\ell\in\mathcal{Q},\\
\I(\Ns(\ell),\Nns(\ell),G(\ell),H_1(\ell),H_2(\ell))&\text{if }\ell\in\{3,7,11,19,43,67,163\},\\
\I(\Ns(\ell),\Nns(\ell))&\text{otherwise}.
\end{lrdcases},
\end{equation*}
and similarly for the sets $\mathcal{T}^\CM(\ell)$ and $\mathcal{T}^\nonCM(\ell)$.

We now turn to the task of proving the lemmas used above, which will complete the proof of Theorem~\ref{thm:3}.

\begin{table}[htb!]
$$
\begin{tabu}{rll}
\toprule
\ell&\divmin\I(\Excep(\ell))&\divmin\I_\Q(\Excep(\ell))\\\midrule
2,3,5,7&1&1\\
11&5&5\\
13&2,3&3,4\\
17&4&8\\
37&6&12\\
\text{other}&\varnothing&\varnothing\\
\bottomrule
\end{tabu}
$$
\caption{div-Minima of the set of indices of subgroups $G$ of some exceptional image $G_E(\ell)=\rho_{E,\ell}(\Gal(\Qbar/\Q))$ listed in Table~\ref{tab:exim} such that a twist of $G$ (or, in the third column, $G$ itself) fixes a nonzero vector of $\F_\ell^2$.}
\label{tab:eejl}
\end{table}

\begin{table}
$$
\begin{tabu}{rll}
\toprule
G_E&\divmin\I(G_E(\ell))&\divmin\I_\Q(G_E(\ell))\\\midrule
\GL_2&(\ell^2-1)/2&\ell^2-1\\
\Ns&\ell-1&2(\ell-1)\\
\Nns&(\ell^2-1)/2&\ell^2-1\\
G,H_1,H_2&(\ell-1)/2&(\ell-1)/2\\
\bottomrule
\end{tabu}
$$
\caption{div-Minima of the set of indices of subgroups $G$ of $G_E(\ell)$ such that a twist of $G$ (or, in the third column, $G$ itself) fixes a nonzero vector of $\F_\ell^2$, for each non-exceptional possible image $G_E(\ell)=\rho_{E,\ell}(\Gal(\Qbar/\Q))$ (the final row should be interpreted in the manner of (\ref{eqn:iij})).}
\label{tab:setl}
\end{table}

\begin{lemma}
\label{lem:fj}
The set of indices of subgroups $G$ of $\GL_2(\ell)$ such that a twist of $G$ fixes a nonzero element of $\F_\ell^2$ has div-minima $\{(\ell^2-1)/2\}$.
\end{lemma}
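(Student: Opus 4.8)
The plan is to reduce the statement to a computation with the stabilizer of a single nonzero vector. Fix $0\ne v\in\F_\ell^2$ and let $S:=\{\gamma\in\GL_2(\ell):\gamma v=v\}$; up to conjugacy $S$ is the group of matrices $\smallmat{1}{b}{0}{d}$ with $b\in\F_\ell$ and $d\in\F_\ell^\times$, so $\#S=\ell(\ell-1)$, and since $\GL_2(\ell)$ acts transitively on $\F_\ell^2\setminus\{0\}$ we have $[\GL_2(\ell):S]=\ell^2-1$. The key elementary observation is that $-I\notin S$: as $\ell$ is odd, $-v\ne v$. Hence $\angles{S,-I}=S\sqcup(-S)$ has order $2\ell(\ell-1)$. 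Recall that the twists of a subgroup $H\le\GL_2(\ell)$ are $\angles{H,-I}$ together with the index-$2$ subgroups of $\angles{H,-I}$ not containing $-I$.

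First I would establish the divisibility (lower) bound. Suppose $H\le\GL_2(\ell)$ has a twist $H'$ fixing some nonzero $v'$, and let $S':=\{\gamma:\gamma v'=v'\}$, a conjugate of $S$. Then $H'\le S'$, so $-I\notin H'$; in particular $H'\ne\angles{H,-I}$ (which always contains $-I$), so $H'$ is an index-$2$ subgroup of $\angles{H,-I}$ not containing $-I$. Since $-I\in\angles{H,-I}\setminus H'$ and $[\angles{H,-I}:H']=2$, this forces $\angles{H,-I}=\angles{H',-I}\le\angles{S',-I}$. Therefore $\#H$ divides $\#\angles{H,-I}$, which divides $\#\angles{S',-I}=2\ell(\ell-1)$; consequently $[\GL_2(\ell):H]=\#\GL_2(\ell)/\#H$ is divisible by $\#\GL_2(\ell)/\bigl(2\ell(\ell-1)\bigr)=(\ell-1)(\ell+1)/2=(\ell^2-1)/2$.

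Next I would produce a subgroup attaining the value $(\ell^2-1)/2$. Take $H_0:=\angles{S,-I}$, which has order $2\ell(\ell-1)$ and hence index $(\ell^2-1)/2$ in $\GL_2(\ell)$. Since $-I\notin S$ and $[H_0:S]=2$, the subgroup $S$ is a twist of $H_0$, and $S$ fixes $v$; thus $(\ell^2-1)/2\in\I(\GL_2(\ell))$. Combining this with the previous paragraph, every element of $\I(\GL_2(\ell))$ is a multiple of $(\ell^2-1)/2$, and $(\ell^2-1)/2$ itself lies in $\I(\GL_2(\ell))$, so $\divmin\I(\GL_2(\ell))=\{(\ell^2-1)/2\}$, as claimed. (The companion value $\divmin\I_\Q(\GL_2(\ell))$ in Table~\ref{tab:setl} follows the same way, using $S$ directly in place of its twist, which loses the factor of $2$.)

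The argument is short; the one point I would be careful about is the twist bookkeeping in the lower bound, namely the observation that a twist of $H$ which fixes a nonzero vector can never be $\angles{H,-I}$ itself but must be one of its $(-I)$-free index-$2$ subgroups — this is exactly where oddness of $\ell$ (so that $-I$ fixes no nonzero vector) enters. Everything else is Lagrange's theorem together with $\#\GL_2(\ell)=\ell(\ell-1)^2(\ell+1)$.
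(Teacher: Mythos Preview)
Your proof is correct and follows essentially the same route as the paper's: both identify the stabilizer $S$ (the paper's $T$) of a nonzero vector as having order $\ell(\ell-1)$, use that $-I\notin S$ to bound $\#H\mid\#\langle S,-I\rangle=2\ell(\ell-1)$ for any $H$ with a vector-fixing twist, and take $\langle S,-I\rangle$ itself as the witness of index $(\ell^2-1)/2$. Your twist bookkeeping (showing $\langle H,-I\rangle=\langle H',-I\rangle\le\langle S',-I\rangle$) is slightly more explicit than the paper's, but the argument is the same.
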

\begin{proof}
Any subgroup of $\GL_2(\ell)$ fixing each element of a $1$-dimensional $\F_\ell$-subspace of $\F_\ell^2$ must be conjugate to a subgroup fixing $\angles{\smallvect{1}{0}}$. Such a subgroup must be upper triangular, with a $1$ in the upper left-hand entry of each matrix, hence it is a subgroup of
\begin{equation*}
T:=\{\smallmat{1}{a}{0}{b}:a\in\F_\ell,b\in\F_\ell^\times\}\le B(\ell)\le\GL_2(\ell),
\end{equation*}
which has order $\ell(\ell-1)$, hence index $\ell^2-1$ in $\GL_2(\ell)$.

Now, since $-1$ does not fix any nontrivial element of $\F_\ell^2$, we need only consider index-$2$ subgroups $H\le\angles{G,-1}$ with $-1\notin H$, where $H\le T$ and $G\le\GL_2(\ell)$, by the previous paragraph. Both $[\GL_2(\ell):H]$ and $[\GL_2(\ell):G]$ are either $[\GL_2(\ell):\angles{G,-1}]$ or $2\cdot[\GL_2(\ell):\angles{G,-1}]$, and evidently $[\GL_2(\ell):G]$ is div-minimal when $-1\in G$ and $[\GL_2(\ell):H]=2\cdot[\GL_2(\ell):G]$. Taking $G=\angles{T,-1}$ then gives a subgroup of $\GL_2(\ell)$ of index $(\ell^2-1)/2$ with $[G:T]=2$. For any other such $G$ and $H$, we have
\begin{equation*}
(\ell^2-1)\mid[\GL_2(\ell):T]\mid[\GL_2(\ell):H]\mid 2\cdot[\GL_2(\ell):G],
\end{equation*}
implying $(\ell^2-1)/2\mid [\GL_2(\ell):G]$, as desired.
\end{proof}

\begin{lemma}
\label{lem:sj}
The set of indices of subgroups $G$ of $\Ns(\ell)$ such that a twist of $G$ fixes a nonzero element of $\F_\ell^2$ has div-minima $\{(\ell-1)\}$.
\end{lemma}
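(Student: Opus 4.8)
The plan is to follow the template of Lemma~\ref{lem:fj}, but to carry it out inside $\Ns(\ell)$ instead of $\GL_2(\ell)$. The first observation, as in that proof, is that $-I$ fixes no nonzero vector of $\F_\ell^2$, so if a twist $G'$ of a subgroup $G\le\Ns(\ell)$ fixes some $0\ne v\in\F_\ell^2$, then $G'$ cannot contain $-I$; hence $G'$ is one of the index-$2$ subgroups of $\angles{G,-I}$ not containing $-I$. Since $-I\in\Ns(\ell)$, we get $G'\le\angles{\Ns(\ell),-I}=\Ns(\ell)$, so $G'$ is itself a subgroup of $\Ns(\ell)$ fixing a nonzero vector; moreover $\angles{G,-I}=\angles{G',-I}=G'\sqcup(-I)G'$ has order $2\#G'$, while $\#G\in\{\#G',2\#G'\}$ since $\angles{G,-I}/G$ is cyclic of order $\le 2$.

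Next I would bound $\#G'$ by a direct calculation with the explicit matrices $\mathcal{D}_\s(x,y)$ and $\mathcal{A}_\s(x,y)$. Suppose $G'\le\Ns(\ell)$ fixes $v=\smallvect{a}{b}\ne 0$. If $G'\le\Cs(\ell)$, then each $\mathcal{D}_\s(x,y)\in G'$ satisfies $xa=a$ and $yb=b$: if $ab\ne 0$ this forces $G'=\{I\}$, and if (say) $b=0$, $a\ne 0$, it forces $x=1$, so $G'\le\{\mathcal{D}_\s(1,y):y\in\F_\ell^\times\}$, a cyclic group of order $\ell-1$ (the case $a=0$ is symmetric). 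If instead $G'\not\le\Cs(\ell)$, pick an anti-diagonal element $\mathcal{A}_\s(x,y)\in G'$; from $\mathcal{A}_\s(x,y)v=\smallvect{xb}{ya}=v$ one reads off that $b\ne 0$, that $a=xb\ne 0$, that $xy=1$, and in fact that $(x,y)=(a/b,b/a)$, so there is at most one anti-diagonal element of $\Ns(\ell)$ fixing $v$ and $v$ lies off both coordinate axes. Then $G'\cap\Cs(\ell)$ also fixes this off-axis $v$, hence is trivial by the previous case, and Lemma~\ref{lem:index2subgpint} (with $N=\Cs(\ell)$) gives $\#G'=2$. In every case $\#G'\mid\ell-1$ (using that $\ell-1$ is even), so since $\#\Ns(\ell)=2(\ell-1)^2$ and $\#G\in\{\#G',2\#G'\}$, the index $[\Ns(\ell):G]=2(\ell-1)^2/\#G$ is a multiple of $\ell-1$.

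Finally I would check that $\ell-1$ is attained, so it is the unique div-minimum. Take $G:=\angles{-I}\cdot\{\mathcal{D}_\s(1,y):y\in\F_\ell^\times\}\le\Cs(\ell)\le\Ns(\ell)$, of order $2(\ell-1)$ and hence index $2(\ell-1)^2/(2(\ell-1))=\ell-1$ in $\Ns(\ell)$. Since $-I\in G$ we have $\angles{G,-I}=G$, and its index-$2$ subgroup $\{\mathcal{D}_\s(1,y):y\in\F_\ell^\times\}$ does not contain $-I$ and fixes $\smallvect{1}{0}$, so it is a twist of $G$ fixing a nonzero vector; thus $\ell-1$ lies in the index set, and by the previous paragraph it divides every element of it, giving div-minima $\{\ell-1\}$. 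I expect the only mildly delicate point to be the non-diagonal case: the fact that an anti-diagonal element of $\Ns(\ell)$ fixes a vector essentially uniquely is what collapses $\#G'$ all the way down to $2$, and one must phrase this so that the divisibility $\#G'\mid\ell-1$ holds uniformly across the cases; the rest is elementary bookkeeping with group orders.
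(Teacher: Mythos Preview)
Your proof is correct and follows the same template as the paper's: reduce to classifying the subgroups $G'\le\Ns(\ell)$ that fix a nonzero vector, then recover $G$ as a twist and compare indices. The one difference is that the paper outsources the classification of such $G'$ to \cite[Lem.~6.6]{lozanorobledo2013}, whereas you compute it directly by casework on diagonal versus anti-diagonal elements. Your direct argument is clean and self-contained---the key observation that an anti-diagonal element fixing $v$ forces $v$ off both coordinate axes, and hence $G'\cap\Cs(\ell)=\{I\}$, is exactly what is needed to get $\#G'\mid\ell-1$ uniformly and thus $(\ell-1)\mid[\Ns(\ell):G]$. The attainment step via $G=\angles{-I}\cdot\{\mathcal{D}_\s(1,y)\}$ is also correct and matches the spirit of the $\angles{T,-1}$ construction in Lemma~\ref{lem:fj}.
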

\begin{proof}
The proof is identical to that of Lemma~\ref{lem:fj}, with $\GL_2(\ell)$ replaced by $\Ns(\ell)$ and $T$ replaced by each of the subgroups identified in \cite[Lem.~6.6]{lozanorobledo2013} in turn.
\end{proof}

\begin{lemma}
\label{lem:nj}
The set of indices of subgroups $G$ of $\Nns(\ell)$ such that a twist of $G$ fixes a nonzero element of $\F_\ell^2$ has div-minima $\{(\ell^2-1)/2\}$.
\end{lemma}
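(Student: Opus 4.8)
The plan is to exploit the rigidity of vector-stabilizers inside $\Nns(\ell)$, where---unlike in $\GL_2(\ell)$ or $\Ns(\ell)$---very few elements fix a nonzero vector. A matrix fixes a nonzero vector of $\F_\ell^2$ precisely when $1$ is an eigenvalue. For $\mathcal{D}_\ns(x,y)\in\Cns(\ell)$ one has $\det(\mathcal{D}_\ns(x,y)-I)=(x-1)^2-\epsilon y^2$, which vanishes only when $y=0$ and $x=1$ since $\epsilon$ is not a square, so the only element of $\Cns(\ell)$ fixing a nonzero vector is $I$ (equivalently, no nonscalar element of $\Cns(\ell)$ is diagonalizable over $\F_\ell$, as noted in the proof of Lemma~\ref{lem:bm}). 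For $\mathcal{A}_\ns(x,y)\in\Nns(\ell)\setminus\Cns(\ell)$ one has $\operatorname{tr}\mathcal{A}_\ns(x,y)=0$ and $\mathcal{A}_\ns(x,y)^2=(x^2-\epsilon y^2)I$, so $\det(\mathcal{A}_\ns(x,y)-I)=1-x^2+\epsilon y^2$ vanishes exactly when $x^2-\epsilon y^2=1$; in that case $\mathcal{A}_\ns(x,y)$ is an involution with eigenvalues $1$ and $-1$, fixing a unique line. It follows that a subgroup $H\le\Nns(\ell)$ fixing a common nonzero vector $v$ has order at most $2$: every nonidentity element of $H$ must be such an involution fixing $v$, and the product of two distinct ones would be a nonidentity element of $\Cns(\ell)$ fixing $v$, which is impossible.

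Next I would bring in twists. The twists of a subgroup $G\le\Nns(\ell)$ are $\angles{G,-1}$ together with the index-$2$ subgroups of $\angles{G,-1}$ not containing $-1$, all of which lie in $\Nns(\ell)$. Suppose some twist $G'$ of $G$ fixes a nonzero vector. Since $-1$ fixes no nonzero vector, $-1\notin G'$, hence $G'\ne\angles{G,-1}$, so $G'$ is an index-$2$ subgroup of $\angles{G,-1}$; by the previous paragraph $\#G'\le 2$, whence $\#\angles{G,-1}=2\#G'\le 4$ and so $\#G\mid 4$. Thus $[\Nns(\ell):G]=2(\ell^2-1)/\#G\in\{2(\ell^2-1),\,\ell^2-1,\,(\ell^2-1)/2\}$, each of which is a multiple of $(\ell^2-1)/2$. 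Conversely, the value $(\ell^2-1)/2$ is attained: since $\smallmat{1}{0}{0}{-1}=\mathcal{A}_\ns(1,0)$ and $1^2-\epsilon\cdot 0^2=1$, the Klein four-group $\angles{\smallmat{1}{0}{0}{-1},-1}\le\Nns(\ell)$ has order $4$ and hence index $(\ell^2-1)/2$, and its twist $\angles{\smallmat{1}{0}{0}{-1}}$ fixes $\smallvect{1}{0}$. Therefore the set of indices in question consists of multiples of $(\ell^2-1)/2$ and contains $(\ell^2-1)/2$ itself, so its div-minima are $\{(\ell^2-1)/2\}$.

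I expect the step needing the most care to be the twist bookkeeping: it is exactly the fact that a vector-fixing twist $G'$ cannot equal $\angles{G,-1}$ (because $-1\notin G'$) that bounds $\#\angles{G,-1}$, and hence $\#G$, by $4$; this is the mechanism producing the factor $(\ell^2-1)/2$ rather than the untwisted value $\ell^2-1$ recorded in Table~\ref{tab:setl}.
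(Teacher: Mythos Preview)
Your proof is correct and follows essentially the same strategy as the paper: identify the maximal vector-stabilizers in $\Nns(\ell)$ (which have order~$2$, hence index $\ell^2-1$), then apply the twist argument of Lemma~\ref{lem:fj} to halve this to $(\ell^2-1)/2$. The only difference is that where the paper defers to \cite[Lem.~7.4]{lozanorobledo2013} for the classification of vector-stabilizers, you compute them directly via the eigenvalue analysis of $\mathcal{D}_\ns(x,y)$ and $\mathcal{A}_\ns(x,y)$; this makes your argument self-contained at the cost of a few extra lines.
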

\begin{proof}
The proof is identical to that of Lemma~\ref{lem:fj}, with $\GL_2(\ell)$ replaced by $\Nns(\ell)$ and $T$ replaced by each of the subgroups identified in \cite[Lem.~7.4]{lozanorobledo2013} in turn.
\end{proof}

\begin{lemma}
\label{lem:bj}
Suppose $\ell\equiv 3\bmod 4$. The set of indices of subgroups $G$ of $G(\ell)$, $H_1(\ell)$, or $H_2(\ell)$ such that a twist of $G$ fixes a nonzero element of $\F_\ell^2$ has div-minima $\{(\ell-1)/2\}$.
\end{lemma}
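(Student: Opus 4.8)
The plan is to mimic the argument of Lemma~\ref{lem:fj}, taking advantage of the fact that $G(\ell)$, $H_1(\ell)$, and $H_2(\ell)$ are already Borel-type groups. Since $-1\in G(\ell)$ and $H_1(\ell),H_2(\ell)\le G(\ell)$, any twist of a subgroup of one of these three groups is again a subgroup of $\angles{G(\ell),-1}=G(\ell)$; so it suffices to classify the subgroups of $G(\ell)$ that fix a nonzero vector $v\in\F_\ell^2$. A short computation with the matrices $\smallmat{a}{b}{0}{\pm a}$ handles this: if $v\in\angles{\smallvect{1}{0}}$ then fixing $v$ forces $a=1$ in every element, so the subgroup lies in $T_0:=\{\smallmat{1}{b}{0}{\pm 1}:b\in\F_\ell\}$, which has order $2\ell$; while if $v\notin\angles{\smallvect{1}{0}}$, then any $\smallmat{a}{b}{0}{a}$ fixing $v$ must be trivial and exactly one $\smallmat{a}{b}{0}{-a}$ fixes $v$ (and it is an involution), so the subgroup has order at most $2$. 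In all cases a subgroup $H'\le G(\ell)$ fixing a nonzero vector satisfies $\#H'\mid 2\ell$ and $-1\notin H'$.

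Next I would carry out the same twist-index bookkeeping as in Lemma~\ref{lem:fj}. Let $H'$ be a twist of $G$ that fixes a nonzero vector. Then $-1\notin H'$, so $H'$ is an index-$2$ subgroup of $\widetilde{G}:=\angles{G,-1}=\angles{H',-1}$ not containing $-1$; hence $\#\widetilde{G}=2\#H'\mid 4\ell$ and $[\widetilde{G}:G]\in\{1,2\}$. If the ambient group is $G(\ell)$, this yields $\#G\mid\#\widetilde{G}\mid 4\ell$, so $(\ell-1)/2=\#G(\ell)/4\ell$ divides $[G(\ell):G]$. If the ambient group is $H_1(\ell)$ or $H_2(\ell)$, the hypothesis $\ell\equiv 3\bmod 4$ makes $-1$ a non-residue, so $-1\notin H_i(\ell)\supseteq G$, forcing $[\widetilde{G}:G]=2$; then $\#G=\tfrac12\#\widetilde{G}\mid 2\ell$, so $(\ell-1)/2=\#H_i(\ell)/2\ell$ divides $[H_i(\ell):G]$. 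Thus every element of $\I(G(\ell),H_1(\ell),H_2(\ell))$ is divisible by $(\ell-1)/2$.

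For attainability I would exhibit the value directly: since $1$ is a residue, $T_0\le H_1(\ell)$, and $T_0$ is a twist of itself that fixes $\smallvect{1}{0}$, so $[H_1(\ell):T_0]=\ell(\ell-1)/2\ell=(\ell-1)/2$ lies in $\I(G(\ell),H_1(\ell),H_2(\ell))$. Combining this with the previous paragraph shows that $(\ell-1)/2$ is the unique div-minimal element, as claimed.

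I expect the only mildly delicate step to be the classification of vector-fixing subgroups of $G(\ell)$ — in particular, ruling out such subgroups of order greater than $2$ that are not (conjugate to) a subgroup of $T_0$ — which is exactly where the rigid form of $G(\ell)$, namely equal diagonal entries up to sign, together with the parity condition $\ell\equiv 3\bmod 4$ (so that the relevant $\smallmat{-1}{b}{0}{1}$-type matrices are involutions and $-1$ is a non-residue), enters. Everything after that is the familiar twist arithmetic of Lemma~\ref{lem:fj}.
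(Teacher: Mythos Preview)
Your argument is correct and follows essentially the same strategy as the paper: classify the vector-fixing subgroups inside the Borel-type group $G(\ell)$ (obtaining $\#H'\mid 2\ell$ and $-1\notin H'$), then run the twist arithmetic of Lemma~\ref{lem:fj}, and finally exhibit $T_0\le H_1(\ell)$ to realise the bound $(\ell-1)/2$.

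The only organisational difference is that the paper treats $G(\ell)$, $H_1(\ell)$, and $H_2(\ell)$ one at a time---computing the maximal vector-fixing subgroup in each and then invoking the twist step separately (in particular, for $H_1,H_2$ it argues directly that $\I=\I_\Q$ since $-1\notin H_i$)---whereas you unify the three cases by first observing that every twist already lies in $G(\ell)$, so a single order bound there suffices. Your route is a little cleaner and makes the role of the hypothesis $\ell\equiv 3\bmod 4$ (forcing $-1\notin H_i$, hence $[\widetilde G:G]=2$) more transparent; the paper's route gives slightly more information, namely the individual div-minima $\{(\ell-1)/2\}$, $\{(\ell-1)/2\}$, $\{\ell-1,\ell(\ell-1)/2\}$ for $G(\ell)$, $H_1(\ell)$, $H_2(\ell)$ respectively.
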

\begin{proof}
The characteristic polynomial of an element $\smallmat{a}{b}{0}{a}\in G(\ell)$ is $(\lambda-a)^2$, hence it has eigenvalue $1$ if and only if $a=1$. Similarly, the characteristic polynomial of an element $\smallmat{a}{b}{0}{-a}\in G(\ell)$ is $\lambda^2-a^2$, hence it has eigenvalue $1$ if and only if $a=\pm 1$. If $a=-1$, then it fixes every element of the space $\angles{\smallvect{b-1}{1}}$, hence it is the only nontrivial element of $G(\ell)$ fixing every element of this space, and $\angles{\smallmat{-1}{b}{0}{1}}\le G(\ell)$ is a subgroup of order $2$. Otherwise, if $a=1$, then both types of elements fix every element of $\angles{\smallvect{1}{0}}$, and together they form the index-$(\ell-1)$ subgroup
\begin{equation*}
\{\smallmat{1}{b}{0}{\pm 1}:b\in\F_\ell\}\le G(\ell),
\end{equation*}
which is maximal as its order, $2\ell$, is divisible by $2$.

The same reasoning shows that the subgroup $\{\smallmat{1}{b}{0}{\pm 1}:b\in\F_\ell\}\le H_1(\ell)$ is maximal, and it also has order $2\ell$, hence index $(\ell-1)/2$.

For $H_2(\ell)$, we obtain the same characteristic polynomials, but in this case elements $\smallmat{-a}{b}{0}{a}\in G(\ell)$ with $a=1$ each generate order-$2$ subgroups fixing $\angles{\smallvect{b-1}{1}}$, and the elements of $H_2(\ell)$ with upper left-hand entry~$1$ form a subgroup of order $\ell$, since $\ell\equiv 3\bmod{4}$ by assumption and therefore $-1\notin(\F_\ell^\times)^2$. Thus, the two div-minimal indices of such subgroups of $H_2(\ell)$ are $\ell-1$ and $\ell(\ell-1)/2$.

Repeating the proof of Lemma~\ref{lem:fj} with $\GL_2(\ell)$ replaced by $G(\ell)$ and $T$ replaced by each of the subgroups of $G(\ell)$ identified above in turn shows that $\I(G(\ell))=\{(\ell-1)/2\}$. Since $-1\notin H_1(\ell),H_2(\ell)$ as $\ell\equiv 3\bmod{4}$ by assumption, any twist of a subgroup $H$ of $H_1(\ell)$ or $H_2(\ell)$ is a subgroup of $H$, hence $\I(H_1(\ell))=\I_\Q(H_1(\ell))=\{(\ell-1)/2\}$ and $\I(H_2(\ell))=\I_\Q(H_2(\ell))=\{\ell-1,\ell(\ell-1)/2\}$. Thus, among $G(\ell)$, $H_1(\ell)$, and $H_2(\ell)$, the div-minimal index is $(\ell-1)/2$.
\end{proof}

\bibliographystyle{merlin}
\bibliography{references}

\end{document}